\documentclass{amsart}

\usepackage{bbm,amsmath,amsfonts,amssymb}
\usepackage{enumerate}
\usepackage{pgf,tikz}
\usepackage{mathrsfs}
\usepackage{mdframed}
\usetikzlibrary{arrows}
\usepackage{float}
\usepackage{subcaption}
\usepackage[mathscr]{eucal}

\allowdisplaybreaks[1]


%
%


\newtheorem{thm}{Theorem}[section]
\newtheorem{cor}[thm]{Corollary}
\newtheorem{lem}[thm]{Lemma}

\theoremstyle{remark}
\newtheorem{rem}[thm]{\bf Remark}
\newtheorem{ex}[thm]{\bf Example}
\newtheorem{property}[thm]{\bf Property}

\theoremstyle{definition}
\newtheorem{defn}[thm]{Definition}

\numberwithin{equation}{section}

\newtheorem*{xrem}{Remark}
\newtheorem*{xex}{Example}


\def\bA{{\mathbb A}}

\def\bR{{\mathbb R}}
\def\bS{{\mathbb S}}
\def\bT{{\mathbb T}}


\def\cA{\mathcal{A}}

\def\cE{\mathcal{E}}

\def\cK{\mathcal{K}}

\def\cM{\mathcal{M}}

\def\cR{\mathcal{R}}
\def\cS{\mathcal{S}}
\def\cT{\mathcal{T}}
\def\cU{\mathcal{U}}
\def\cV{\mathcal{V}}

\def\eps{{\varepsilon}}
\def\ONE{{\mathbbm 1}}

\def\R{\mathbb{R}}

\newcommand{\supp}{\operatorname{supp}}
\newcommand{\diam}{\operatorname{diam}}
\newcommand{\dist}{\operatorname{dist}}
\newcommand{\constant}{\operatorname{constant}}

\def\tri{\triangle}
\def\tR{{\tilde R}}

\def\LL{\Lambda}
\def\tLL{{\tilde{\Lambda}}}

\def\ttLL{\tilde{{\tilde{\Lambda}}}}
\def\MM{M}
\def\cf{{c^\flat}}

\def\ct{{\tilde{c}}}
\def\ch{{\hat{c}}}
\def\cch{{\check{c}}}
\def\cd{{c_\diamond}}
\def\FF{\mathcal{F}}
\def\GG{\mathcal{N}}

\def\fTT{{\mathfrak{T}}}
\def\nn{\nu}

\def\Ga{\Gamma}
\def\tGa{{\tilde\Gamma}}

\def\fA{{\mathfrak{A}}}
\def\fT{{\mathfrak{T}}}
\def\cITt{{I_T^t}}
\def\ITt{{J_T^h}}
\def\ITh{{I_T^h}}
\def\JTh{{J_T^h}}

\def\cH{\mathfrak{N}}

\begin{document}

\title[Nonlinear spline approximation]
{Nonlinear nonnested 2-d spline approximation}

\author{M. Lind}
\address{Department of Mathematics\\Karlstad University\\
651 88 Karlstad\\ Sweden }
\email{martin.lind@kau.se}

\author{P. Petrushev}
\address{Department of Mathematics\\University of South Carolina\\
Columbia, SC 29208}
\email{pencho@math.sc.edu}

\subjclass[2010]{41A15}

\keywords{Spline approximation, Nonlinear approximation, Besov spaces}


\begin{abstract}
Nonlinear approximation from regular piecewise polynomials (splines) 
supported on rings in $\R^2$ is studied.
By definition a ring is a set in $\R^2$ obtained by subtracting a compact convex set with polygonal boundary
from another such a set, but without creating uncontrollably narrow elongated subregions.
Nested structure of the rings is not assumed, however, uniform boundedness of the eccentricities
of the underlying convex sets is required.
It is also assumed that the splines have maximum smoothness.
Bernstein type inequalities for this sort of splines are proved which allow
to establish sharp inverse estimates in terms of Besov spaces.
\end{abstract}

\date{June 16, 2015}

\maketitle

\section{Introduction}\label{sec:Introduction}

Nonlinear approximation from piecewise polynomials (splines) in dimensions\\
$d>1$ is important from theoretical and practical points of view.
We are interested in characterising the rates of nonlinear spline approximation in $L^p$.
While this theory is simple and well understood in the univariate case,
it is underdeveloped and challenging in dimensions $d>1$.

In this article we focus on nonlinear approximation in $L^p(\Omega)$, $0<p<\infty$,
from regular piecewise polynomials in $\R^2$ or on compact subsets of $\R^2$ with polygonal boudaries.
Our goal is to obtain complete characterization of the rates of approximation
(the associated approximation spaces).
To describe our results we begin by introducing in more detail our

\subsection*{Setting and approximation tool}

We are interested in approximation in $L^p$, $0<p<\infty$,
from the class of regular piecewise polynomials
$\cS(n, k)$ of degree $k-1$ with $k\ge 1$ of maximum smoothness over $n$ rings.
More specifically, with $\Omega$ being a compact polygonal domain in $\R^2$ or $\Omega=\R^2$,
we denote by $\cS(n, k)$ the set of all piecewise polynomials $S$ of the form
\begin{equation}\label{def-splines-0}
S=\sum_{j=1}^n P_j\ONE_{R_j},
\quad S\in W^{k-2}(\Omega),
\quad P_j\in\Pi_k,
\end{equation}
where $R_1, \dots, R_n$ are rings with disjoint interiors.
Here $\Pi_k$ denotes the set of all algebraic polynomials of degree $k-1$ in $\R^2$
and $S\in W^{k-2}(\Omega)$ means that all partial derivatives
$\partial^\alpha S \in C(\Omega)$, $|\alpha| \le k-2$.
In the case where $k=1$, these are simply piecewise constants.

A set $R\subset \R^2$ is called a ring if $R$ is a compact convex set with polygonal boundary
or the difference of two such sets. All convex sets we consider are
with uniformly bounded eccentricity
and we do not allow uncontrollably narrow elongated subregions.
For the precise definitions, see \S\ref{subsec:setting-1} and \S \ref{subsec:setting-2}.

\subsection*{Motivation}

Our setting would simplify considerably if the rings $R_j$ in (\ref{def-splines-0})
are replaced by regular convex sets with polygonal boundaries or simply triangles.
However, this would restrict considerably the approximation power of our approximation tool.
As will be seen the piecewise polynomials as defined above with rings allow to
capture well point singularities of functions, which is not quite possible with piecewise
polynomials over convex polygonal sets.
The idea of using rings has already been utilized in \cite{CDPX}.

\smallskip

It is important to point out that our tool for approximation although regular
is highly nonlinear. In particular, we do not assume any nested structure of
the rings involved in the definition of different splines $S$ in (\ref{def-splines-0}).
The case of approximation from splines over nested (anisotropic) rings induced by
hierarchical nested triangulations is developed in \cite{DP, KP1}.

\smallskip

Denote by $S_n^k(f)_p$ the best $L^p$-approximation of a function $f\in L^p(\Omega)$
from $\cS(n, k)$.
Our goal is to completely characterize the approximation spaces
$A_q^\alpha$, $\alpha>0$, $0<q\le \infty$, defined by
the (quasi)norm
\begin{equation*}
\|f\|_{A_q^\alpha} := \|f\|_{L^p}
+ \Big(\sum_{n=1}^\infty\big(n^\alpha S_n^k(f)_p\big)^q\frac{1}{n}\Big)^{1/q}
\end{equation*}
with the $\ell^q$-norm replaced by the $\sup$-norm if $q=\infty$.
To this end we utilize the standard machinery of Jackson and Bernstein estimates.
The Besov spaces $B_\tau^{s, k}:=B_{\tau\tau}^{s, k}$ with $1/\tau=s/2+1/p$ naturally
appear in our regular setting.
The Jackson estimate takes the form: For any $f\in B_\tau^{s, k}$ 
\begin{equation}\label{Jackson-intr}
S_n^k(f)_p \le cn^{-s/2}|f|_{B_\tau^{s, k}}.
\end{equation}
For $k=1, 2$ this estimate follows readily from the results in~\cite{KP1}.
It is an open problem to establish it for $k>2$.
Estimate (\ref{Jackson-intr}) implies the direct estimate
\begin{equation}\label{dir-est}
S_n^k(f)_p \le cK(f, n^{-s/2}),
\end{equation}
where $K(f, t)=K(f, t;L^p, B_\tau^{s,k})$ is the $K$-functional induced by $L^p$ and $B_\tau^{s, k}$.

It is a major problem to establish a companion inverse estimate.
The following Bernstein estimate would imply such an estimate:
\begin{equation}\label{Bernst-est-1}
|S_1-S_2|_{B^{s,k}_{\tau}} \le cn^{s/2}\|S_1-S_2\|_{L^p}, \quad S_1, S_2\in \cS(n, k).
\end{equation}
However, as is easy to show this estimate is not valid.
The problem is that $S_1-S_2$ may have one or more uncontrollably elongated parts
such as $\ONE_{[0, \eps]\times[0, 1]}$ with small $\eps$, which create problems for the Besov norm,
see Example~\ref{example-1} below.

The main idea of this article is to replace (\ref{Bernst-est-1}) by the Bernstein type estimate:
\begin{equation}\label{Bernst-est-2}
|S_1|_{B^{s,k}_{\tau}}^\lambda \le |S_2|_{B^{s,k}_{\tau}}^\lambda + cn^{\lambda s/2}\|S_1-S_2\|_{L^p}^\lambda,
\quad \lambda:=\min\{\tau, 1\},
\end{equation}
where $0<s/2<k-1+1/p$.
This estimate leads to the needed inverse estimate:
\begin{equation}\label{inv-est}
K(f, n^{-s/2}) \le cn^{-s/2}
\Big(\sum_{\nu=1}^n \frac{1}{\nu}\big[\nu^{s/2} S_\nu(f)_p\big]^\lambda + \|f\|_p^\lambda\Big)^{1/\lambda}.
\end{equation}
In turn, this estimate and (\ref{dir-est}) yield a characterization of the associated approximation spaces
$A_q^\alpha$ in terms of real interpolation spaces:
\begin{equation}\label{character-app}
A_q^\alpha = (L^p, B_\tau^{s, k})_{\frac{\alpha}{s}, q},
\quad 0<\alpha<s, \; 0<q\le \infty.
\end{equation}
See e.g. \cite{DL,PP}.

A natural restriction on the Bernstein estimate (\ref{Bernst-est-2}) is the requirement that
the splines $S_1, S_2\in \cS(n, k)$ have maximum smoothness.
For instance, if we consider approximation from piecewise linear functions $S$ ($k=2$),
it is assumed that $S$ is continuous.
As will be shown in Example~\ref{example-maxsmooth} estimate (\ref{Bernst-est-2}) is
no longer valid for discontinuous piecewise linear functions.

The proof of estimate (\ref{Bernst-est-2}) is quite involved. To make it more understandable
we first prove it in \S\ref{sec:piece-const} in the somewhat easier case of piecewise constants
and then in \S\ref{sec:linear-splines} for smoother splines.
Our method is not restricted to splines in dimension $d=2$.
However, there is a great deal of geometric arguments involved in our proofs and to avoid
more complicated considerations we consider only spline approximation in dimension $d=2$ here.

\smallskip

\noindent
{\em Useful notation.}
Throughout this article we shall use
$|G|$ to denote the Lebesgue measure a set $G\subset \R^2$,
$G^\circ$, $\overline{G}$, and $\partial G$ will denote the interior, closure, and boundary of $G$,
$d(G)$ will stand for the diameter of $G$,
and $\ONE_G$ will denote the characteristic function of  $G$.
If $G$ is finite, then $\# G$ will stand for the number of elements of $G$.
If $\gamma$ is e polygon in $\R^2$, then $\ell(\gamma)$ will denote its length.
Positive constants will be denoted by $c_1$, $c_2$, $c'$, $\dots$ and they may vary at every occurrence.
Some important constants will be denoted by $c_0$, $N_0$, $\beta, \dots$
and they will remain unchanged throughout.
The notation $a\sim b$ will stand for $c_1\le a/b\le c_2$.

\section{Background}\label{sec:beckground}

\subsection{Besov spaces}\label{subsec:smooth-Besov-spaces}

Besov spaces naturally appear in spline approximation.

The Besov space $B^{s, k}_\tau=B_{\tau\tau}^{s, k}$, $s>0$, $k\ge 1$, $1/\tau:=s/2+1/p$
is defined as the set of all functions $f\in L^\tau(\Omega)$ such that
\begin{equation}\label{def-smooth-Besov}
|f|_{B^{s, k}_\tau}
:= \Big(\int_0^\infty\big[t^{-s}\omega_k(f, t)_\tau\big]^\tau\frac{dt}{t}\Big)^{1/\tau} <\infty,
\end{equation}
with the usual modification when $q=\infty$.
Here $\omega_k(f, t)_\tau := \sup_{|h|\le t}\|\Delta_h^kf(\cdot)\|_{L^\tau(\Omega)}$
with
$
\Delta_h^kf(x):= \sum_{\nu=0}^k (-1)^{k+\nu} \binom{k}{\nu}f(x+\nu h)
$
if the segment $[x, x+kh] \subset \Omega$
and $\Delta_h^kf(x):=0$ otherwise.

Observe that for the standard Besov spaces $B^s_{pq}$ with $s>0$ and $1\le p, q \le \infty$
the norm is independent of the index $k>s$. However, in the Besov spaces above in general $\tau <1$,
which changes the nature of the Besov space and $k$ should no longer be directly connected to $s$.
For more details, see the discussion in \cite{KP1}, pp. 202-203.

\subsection{Nonlinear spline approximation in dimension \boldmath $d=1$}\label{subsec:univar-spline-app}
For comparison, here we provide a brief account of nonlinear spline approximation in the univariate case.
Denote by $S_n^k(f)_p$ the best $L^p$-approximation of $f\in L^p(\R)$ from
the set $S(n, k)$ of all picewise polynomials $S$ of degree $<k$ with $n+1$ free knots.
Thus, $S\in S(n, k)$ if
$S=\sum_{j=1}^n P_j\ONE_{I_j}$,
where $P_j\in \Pi_k$ and $I_j$, $j=1, \dots, n$, are arbitrary compact intervals
with disjoin interiors and $\cup_j I_j$ is an interval.
No smoothness of $S$ is required.

Let $s>0$, $0<p<\infty$, and $1/\tau=s+1/p$.
The following Jackson and Bernstein estimates hold (see \cite{P}):
If $f\in L^p(\R)$ and $n\ge 1$, then
\begin{equation}\label{dir-est-d1}
S_n^k(f)_p \le cn^{-s}|f|_{B_\tau^{s, k}} 
\end{equation}
and
\begin{equation}\label{Bernst-est-d1}
|S|_{B^{s,k}_{\tau}} \le cn^{s}\|S\|_{L^p}, \quad S\in S(n, k),
\end{equation}
where $c>0$ is a constant depending only on $s$ and $p$.
These estimates imply direct and inverse estimates which allow to characterise completely
the respective approximation spaces. For more details, see \cite{P} or \cite{DL, PP}.

Several remarks are in order.
(1) Above no smoothness is imposed on the piecewise polynomials from $S(n, k)$.
The point is that the rates of approximation from smooth splines are the same as for nonsmooth splines.
A key observation is that in dimension $d=1$ the discontinuous piecewise polynomials are
infinitely smooth with respect to the Besov spaces $B_\tau^{s, k}$.
This is not the case in dimensions $d>1$ - smoothness matters.
(2)~Unlike in the multivariate case, estimates (\ref{dir-est-d1})-(\ref{Bernst-est-d1})
hold for every $s>0$.
(3) If $S_1, S_2\in S(n, k)$, then $S_1-S_2\in S(2n, k)$,
and hence (\ref{Bernst-est-d1}) is sufficient for establishing the respective inverse estimate.
This is not true in the multivariate case and one needs estimates like (\ref{Bernst-est-1}) (if valid)
or (\ref{Bernst-est-2}) (in our case).
(4) There is a great deal of geometry involved in multivariate spline approximation,
while in dimension $d=1$ there is none.

\subsection{Nonlinear nested spline approximation in dimension \boldmath $d=2$}\label{subsec:nested-spline-app}

The rates of approximation in $L^p$, $0<p<\infty$, from splines generated by
multilevel anisotropic nested triangulations in $\R^2$ are studied in \cite{DP,KP1}.
The respective approximation spaces are completely characterized in terms of Besov type spaces
(B-spaces) defined by local piecewise polynomial approximation.
The setting in \cite{DP,KP1} allows to deal with piecewise polynomials over triangulations
with arbitrarily sharp angles.
However, the nested structure of the underlying triangulations is quite restrictive.
In this article we consider nonlinear approximation from nonnested splines,
but in a regular setting. It is a setting that frequently appears in applications.

\section{Nonlinear approximation from piecewise constants}\label{sec:piece-const} 

\subsection{Setting}\label{subsec:setting-1}

Here we describe all components of our setting, including
the region $\Omega$ where the approximation will take place and the tool for approximation we consider.

\subsection*{The region \boldmath $\Omega$.}

We shall consider two scenarios for $\Omega$:
(a) $\Omega=\bR^2$ or (b) $\Omega$ is a compact polygonal domain in $\R^2$.
More explicitly, in the second case we assume that $\Omega$ can be represented as the union
of finitely many triangles with disjoint interiors obeying the minimum angle condition.
Therefore, the boundary $\partial \Omega$ of $\Omega$ is the union of finitely many polygons consisting
of finitely many segments (edges).

\subsection*{The approximation tool}

To describe our tool for approximation we first introduce {\em rings} in $\R^2$.

\begin{defn}\label{def-rings}
We say that $R\subset \R^2$ is a ring if $R$ can be represented in the form
$R=Q_1\setminus Q_2$, where $Q_2, Q_1$ satisfy the following conditions:

(a) $Q_2\subset Q_1$ or $Q_2=\emptyset$;

(b) Each of $Q_1$ and  $Q_2$ is a compact regular convex set in $\R^2$
whose boundary is a polygon consisting of no more than $N_0$ ($N_0$ fixed) line segments.
Here a compact convex set $Q\subset \R^2$ is deemed {\em regular} if
$Q$ has a {\em bounded eccentricity}, that is,
there exists balls
$B_1$, $B_2$, $B_j=B(x_j, r_j)$, such that $B_2\subset Q \subset B_1$ and
$r_1 \le c_0 r_2$, where $c_0>0$ is a universal constant.

(c) $R$ contains no uncontrollably narrow and elongated subregions, which is specified as follows:
Each edge (segment) $E$ of the boundary of $R$  can be subdivided
into the union of at most two segments $E_1$, $E_2$ ($E=E_1\cup E_2$)
with disjoint (one dimensional) interiors such that
there exist triangles $\triangle_1$ with a side $E_1$ and adjacent to $E_1$ angles of magnitude $\beta$,
and $\triangle_2$ with a side $E_2$ and adjacent to $E_2$ angles of magnitude $\beta$
such that $\triangle_j\subset R$, $j=1, 2$,
where $0<\beta \le \pi/3$ is a fixed constant.
\end{defn}

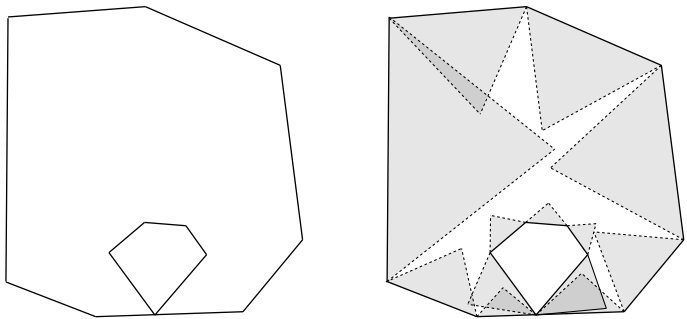
\begin{figure}[H]
\begin{center}
\begin{tikzpicture}[scale=0.55]
\clip(0.,0.) rectangle (9.,8.5);
\draw [line width=0.5pt] (4.57,7.6)-- (7.83,6.18);
\draw [line width=0.5pt] (7.83,6.18)-- (8.37,1.96);
\draw [line width=0.5pt] (8.37,1.96)-- (6.93,0.22);
\draw [line width=0.5pt] (6.93,0.22)-- (3.37,0.10);
\draw [line width=0.5pt] (3.37,0.10)-- (1.2,0.94);
\draw [line width=0.5pt] (1.2,0.95)--(1.25,7.32);
\draw [line width=0.5pt] (1.25,7.35)-- (4.57,7.6);
\draw [line width=0.5pt] (4.80,0.14)--(6.05,1.6);
\draw [line width=0.5pt] (6.05,1.6)--(5.55,2.30);
\draw [line width=0.5pt] (5.55,2.30)--(4.55,2.38);
\draw [line width=0.5pt] (4.55,2.38)--(3.69,1.65);
\draw [line width=0.5pt] (3.69,1.65)--(4.80,0.14);
\end{tikzpicture}
\begin{tikzpicture}[ scale=0.55]
\clip(0.,0.) rectangle (9.,8.5);
\draw [line width=0.5pt] (4.57,7.6)-- (7.83,6.18);
\draw [line width=0.5pt] (7.83,6.18)-- (8.37,1.96);
\draw [line width=0.5pt] (8.37,1.96)-- (6.93,0.22);
\draw [line width=0.5pt] (6.93,0.22)-- (3.37,0.10);
\draw [line width=0.5pt] (3.37,0.10)-- (1.2,0.95);
\draw [line width=0.5pt] (1.2,0.94)--(1.25,7.35);
\draw [line width=0.5pt] (1.25,7.32)-- (4.57,7.6);
\draw [line width=0.5pt] (4.80,0.14)--(6.05,1.6);
\draw [line width=0.5pt] (6.05,1.6)--(5.55,2.30);
\draw [line width=0.5pt] (5.55,2.30)--(4.55,2.38);
\draw [line width=0.5pt] (4.55,2.38)--(3.69,1.65);
\draw [line width=0.5pt] (3.69,1.65)--(4.80,0.14);
\draw (4.80,0.14)--(6.5,0.3);
\draw (6.05,1.6)--(6.5,0.3);
\fill[fill=black,fill opacity=0.1](6.05,1.6)--(4.80,0.14)--(6.5,0.3)--cycle;
\draw [dash pattern=on 1pt off 1pt] (5.55,2.30)--(6.25,2.35);
\draw [dash pattern=on 1pt off 1pt] (6.05,1.6)--(6.25,2.35);
\fill[fill=black,fill opacity=0.1](5.55,2.30)-- (6.05,1.6)--(6.25,2.35)--cycle;
\draw [dash pattern=on 1pt off 1pt] (4.55,2.38)--(5.1,2.85);
\draw [dash pattern=on 1pt off 1pt] (5.55,2.30)--(5.1,2.85);
\fill[fill=black,fill opacity=0.1](4.55,2.38)--(5.55,2.30)--(5.1,2.85);--cycle;
\draw [dash pattern=on 1pt off 1pt] (3.69,1.65)--(3.7,2.55);
\draw [dash pattern=on 1pt off 1pt] (4.55,2.38)--(3.7,2.55);
\fill[fill=black,fill opacity=0.1](3.69,1.65)--(4.55,2.38)--(3.7,2.55)--cycle;
\draw [dash pattern=on 1pt off 1pt] (4.80,0.14)--(3.15,0.4);
\draw [dash pattern=on 1pt off 1pt] (3.69,1.65)--(3.15,0.4);
\fill[fill=black,fill opacity=0.1](4.80,0.14)--(3.15,0.4)--(3.69,1.65)--cycle;
\draw [dash pattern=on 1pt off 1pt] (1.2,0.95)--(5.25,4.15);
\draw [dash pattern=on 1pt off 1pt] (1.25,7.35)--(5.25,4.15);
\fill[fill=black,fill opacity=0.1](1.2,0.95)--(1.25,7.35)--(5.25,4.15)--cycle;
\draw [dash pattern=on 1pt off 1pt] (4.57,7.6)--(3.45,5);
\draw [dash pattern=on 1pt off 1pt] (1.25,7.32)--(3.45,5);
\fill[fill=black,fill opacity=0.1] (4.57,7.6)--(1.25,7.32)--(3.45,5)--cycle;
\draw [dash pattern=on 1pt off 1pt] (1.17,0.94)--(3.0,1.75);
\draw [dash pattern=on 1pt off 1pt] (3.37,0.10)--(3.0,1.75);
\fill[fill=black,fill opacity=0.1](1.17,0.94)--(3.37,0.10)--(3.0,1.75)--cycle;
\draw [dash pattern=on 1pt off 1pt] (3.37,0.10)--(4,.8);
\draw [dash pattern=on 1pt off 1pt] (4.80,0.14)--(4,.8);
\fill[fill=black,fill opacity=0.1](3.37,0.10)--(4,.8)--(4.80,0.14)--cycle;
\draw [dash pattern=on 1pt off 1pt] (4.80,0.14)--(5.9,1.15);
\draw [dash pattern=on 1pt off 1pt] (6.93,0.22)--(5.9,1.15);
\fill[fill=black,fill opacity=0.1](4.80,0.14)-- (6.93,0.22)--(5.9,1.15);
\draw [dash pattern=on 1pt off 1pt] (6.93,0.22)--(6.2,2.15);
\draw [dash pattern=on 1pt off 1pt] (8.37,1.96)--(6.2,2.15);
\fill[fill=black,fill opacity=0.1](6.93,0.22)--(8.37,1.96)--(6.2,2.15)--cycle;
\draw [dash pattern=on 1pt off 1pt] (8.37,1.96)--(5.15,3.7);
\draw [dash pattern=on 1pt off 1pt] (7.83,6.18)--(5.15,3.7);
\fill[fill=black,fill opacity=0.1] (8.37,1.96)--(7.83,6.18)--(5.15,3.7)--cycle;
\draw [dash pattern=on 1pt off 1pt] (7.83,6.18)--(4.95,4.6);
\draw [dash pattern=on 1pt off 1pt] (4.57,7.6)--(4.95,4.6);
\fill[fill=black,fill opacity=0.1] (7.83,6.18)-- (4.57,7.6)--(4.95,4.6)--cycle;
\end{tikzpicture}
\caption{Left: a ring $R=Q_1\setminus Q_2$. Right: $R$ with the triangles associated to the segments of $\partial R$.}
\end{center}
\end{figure}

\begin{xrem}
Observe that from the above definition it readily follows that for any ring $R$ in $\R^2$
\begin{equation}\label{measure-ring}
|R|\sim d(R)^2
\end{equation}
with constants of equivalence depending only on the parameters $N_0$, $c_0$, and $\beta$.
\end{xrem}

In the case when $\Omega$ is a compact polygonal domain in $\R^2$, we assume that
there exists a constant $n_0 \ge 1$ such that $\Omega$ can be represented as the union of
$n_0$ rings $R_j$ with disjoint interiors:
$\Omega = \cup_{j=1}^{n_0} R_j$. If $\Omega=\R^2$, then we set $n_0:=1$.

We now can introduce the class of regular piecewise constants.

{\em Case 1:} $\Omega$ is a compact polygonal domain in $\R^2$.
We denote by $\cS(n, 1)$ ($n\ge n_0$) the set of all piecewise constants $S$ of the form
\begin{equation}\label{def-splines-1}
S=\sum_{j=1}^n c_j\ONE_{R_j}, \quad c_j\in\R,
\end{equation}
where $R_1, \dots, R_n$ are rings with disjoint interiors
such that $\Omega=\cup_{j=1}^n R_j$.

\smallskip

{\em Case 2:} $\Omega =\R^2$.
In this case we denote by $\cS(n, 1)$ the set of all piecewise constant functions $S$ of
the form (\ref{def-splines-1}),
where $R_1, \dots, R_n$ are rings with disjoint interiors
such that the support $R :=\cup_{j=1}^n R_j$ of $S$ is a ring in the sense of Definition~\ref{def-rings}.

\begin{xex}
A simple case of the above setting is when $\Omega=[0, 1]^2$
and the rings $R$ are of the form $R=Q_1\setminus Q_2$, where $Q_1$, $Q_2$
are dyadic squares in $\R^2$.
These kind of dyadic rings have been used in \cite{CDPX}.

A bit more general is the setting when $\Omega$ is
a regular rectangle in $\R^2$ with sides parallel to the coordinate axes
or $\Omega=\R^2$ and the rings $R$ are of the form $R=Q_1\setminus Q_2$,
where $Q_1$, $Q_2$ are regular rectangles with sides parallel to the coordinate axes,
and no narrow and elongated subregions are allowed in the sense of Definition~\ref{def-rings}~(c).
\end{xex}

Clearly the set $\cS(n, 1)$ in nonlinear since the rings $\{R_j\}$ and
the constants $\{c_j\}$ in (\ref{def-splines-1}) may vary with $S$.

We denote by $S_n^1(f)_p$ the best approximation of $f\in L^p(\Omega)$ from $\cS(n, 1)$
in $L^p(\Omega)$, $0< p<\infty$, i.e.
\begin{equation}\label{def-best-app}
S_n^1(f)_p:= \inf_{S\in \cS(n, 1)}\|f-S\|_{L^p}.
\end{equation}

\subsection*{Besov spaces}

When approximating in $L^p$, $0< p<\infty$, from piecewise constants the Besov spaces
$B^{s, 1}_{\tau}$ with 
$1/\tau=s/2+1/p$ naturally appear.
In this section, we shall use the abbreviated notation $B^s_{\tau}$ for these spaces.

\subsection{Direct and inverse estimates}\label{subsec:dir-inv-estimats}

The following {\em Jackson estimate} is quite easy to establish (see \cite{KP1}):
If $f\in B^s_\tau$, $s>0$, $1/\tau:=s/2+1/p$, $0<p<\infty$, then $f\in L^p(\Omega)$ and
\begin{equation}\label{Jackson}
S_n^1(f)_p \le cn^{-s/2}|f|_{B^s_{\tau}} \quad\hbox{for}\quad n\ge n_0,
\end{equation}
where $c>0$ is a constant depending only on $s, p$ and
the structural constants $N_0$, $c_0$, and $\beta$ of the setting.

This estimate leads immediately to the following direct estimate: If $f\in L^p(\Omega)$, then
\begin{equation}\label{direct-est}
S_n^1(f)_p \le cK(f, n^{-s/2}), \quad n\ge 1,
\end{equation}
where $K(f, t)$ is the $K$-functional induced by $L^p$ and $B^s_\tau$, namely,
\begin{equation}\label{def-K-functional}
K(f, t)=K(f, t; L^p, B^s_\tau) := \inf_{g\in B^s_\tau} \{\|f-g\|_p+t|g|_{B^s_\tau}\}, \quad t>0.
\end{equation}

The main problem here is to prove a matching inverse estimate.
Observe that the following Bernstein estimate holds: If $S\in \cS(n, 1)$, $n\ge n_0$,
and $0< p<\infty$, $0<s<2/p$, $1/\tau=s/2+1/p$, then
\begin{equation}\label{Bernstein}
|S|_{B^s_{\tau}} \le cn^{s/2}\|S\|_{L^p},
\end{equation}
where the constant $c>0$ depends only on $s, p$, and the structural constants of the setting
(see the proof of Theorem~\ref{thm:smooth-Bernstein-1}).
The point is that this estimate does not imply a companion to (\ref{direct-est}) inverse estimate.
The following estimate would imply such an estimate:
\begin{equation}\label{Bernstein-1}
|S_1-S_2|_{B^s_{\tau}} \le cn^{s/2}\|S_1-S_2\|_{L^p}, \quad S_1, S_2\in \cS(n, 1).
\end{equation}
However, as the following example shows this estimate is not valid.

\begin{ex}\label{example-1}
Consider the function $f :=\ONE_{[0, \eps]\times[0, 1]}$, where $\eps>0$ is sufficiently small.
It is easy to see that
$$
\omega_1(f, t)_\tau^\tau
\sim \left\{
\begin{array}{lll}
t \quad \hbox{if} \quad t\le \eps\\
\eps \quad \hbox{if} \quad t > \eps
\end{array}
\right.
$$
and hence for $0<s<2/p$ and $1/\tau=s/2+1/p$ we have
$$
|f|_{B^s_{\tau}} \sim \eps^{1/\tau-s} \sim \eps^{1/p-s/2} \sim \eps^{-s/2}\|f\|_{L^p},
\quad \hbox{implying} \quad
|f|_{B^s_{\tau}} \not\le c\|f\|_{L^p},
$$
since $\eps$ can be arbitrarily small.
It is easy to see that one comes to the same conclusion if $f$ is the characteristic function of
any convex elongated set in $\R^2$.
The point is that if $S_1, S_2\in \cS(n, 1)$, then $S_1-S_2$ can be a constant multiple
of the characteristic function of one or more elongated convex sets in $\R^2$
and, therefore, estimate (\ref{Bernstein-1}) is in general not possible.
\end{ex}

We overcome the problem with estimate (\ref{Bernstein-1}) by establishing the following
main result:

\begin{thm}\label{thm:Bernstein}
Let $0< p<\infty$, $0<s<2/p$, and $1/\tau=s/2+1/p$.
Then for any  $S_1, S_2\in \cS(n, 1)$, $n\ge n_0$, we have
\begin{align}
|S_1|_{B^s_{\tau}} &\le |S_2|_{B^s_{\tau}} + cn^{s/2}\|S_1-S_2\|_{L^p},
\quad\hbox{if} \;\; \tau \ge 1, \quad\hbox{and}\label{Bernstein-2}\\
|S_1|_{B^s_{\tau}}^\tau &\le |S_2|_{B^s_{\tau}}^\tau + cn^{\tau s/2}\|S_1-S_2\|_{L^p}^\tau,
\quad\hbox{if} \;\; \tau < 1, \label{Bernstein-3}
\end{align}
where the constant $c>0$ depends only on $s, p$, and the structural constants $N_0$, $c_0$, and $\beta$.
\end{thm}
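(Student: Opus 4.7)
My plan for the proof is to perform a direct ring-by-ring comparison of the two Besov seminorms, rather than to pass to $S_1-S_2$ as a single spline. For $\tau<1$ the quasi-triangle inequality of $|\cdot|_{B^s_\tau}^\tau$ reduces (\ref{Bernstein-3}) to establishing $|S_1-S_2|_{B^s_\tau}^\tau\le c\,n^{\tau s/2}\|S_1-S_2\|_{L^p}^\tau$, but as Example \ref{example-1} warns, one cannot simply apply the Bernstein inequality (\ref{Bernstein}) to $S_1-S_2$: the pieces of the common refinement of the $S_1$- and $S_2$-partitions need not be rings in the sense of Definition \ref{def-rings}, since they may be arbitrarily thin slivers. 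Avoiding this difficulty is the reason for working with $S_1$ and $S_2$ separately.

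The first main step is an additive upper bound for the Besov seminorm of a ring spline. A direct calculation of $\omega_1(\ONE_R,t)_\tau$, combined with the measure relation (\ref{measure-ring}), yields
\begin{equation*}
|\ONE_R|_{B^s_\tau}^\tau \;\sim\; d(R)^{2-s\tau}
\end{equation*}
(the defining integral converges thanks to $s\tau<1$, equivalent to $s<2/p$). The quasi-triangle inequality then gives, for any $S=\sum_{j=1}^n c_j\ONE_{R_j}\in\cS(n,1)$,
\begin{equation*}
|S|_{B^s_\tau}^\tau \;\le\; c\sum_{j=1}^n |c_j|^\tau d(R_j)^{2-s\tau} \;\sim\; \sum_{j=1}^n \|c_j\ONE_{R_j}\|_{L^p}^\tau,
\end{equation*}
using the identity $(2-s\tau)/\tau=2/p$. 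Combined with H\"older's inequality (exponent $p/\tau>1$) this already recovers the Bernstein bound (\ref{Bernstein}) with the sharp factor $n^{\tau s/2}=n^{1-\tau/p}$, and more importantly it turns $|S|_{B^s_\tau}^\tau$ into a sum over rings that can be compared term by term.

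The core step is then a classification of the $S_1$-rings relative to $S_2$. Passing to the common refinement, call $R_i$ of the $S_1$-partition \emph{good} if some value $v_i$ attained by $S_2$ occupies a large (say $\ge 1/2$) fraction of $R_i$ on a subregion that is itself a ring of size $\sim d(R_i)$, and \emph{bad} otherwise. For good $R_i$, the subadditivity $|c_{1,i}|^\tau\le|c_{1,i}-v_i|^\tau+|v_i|^\tau$ splits the $R_i$-contribution: the first piece is controlled by $d(R_i)^{-s\tau}\|S_1-S_2\|_{L^p(R_i)}^\tau$ via $|R_i|\sim d(R_i)^2$, and the second is charged against the corresponding ring term in the additive upper bound for $|S_2|_{B^s_\tau}^\tau$. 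For bad $R_i$, the non-elongation clause forces $\|S_1-S_2\|_{L^p(R_i)}^\tau\gtrsim|c_{1,i}|^\tau|R_i|^{\tau/p}$, so the full ring contribution is charged directly to $\|S_1-S_2\|_{L^p}^\tau$. Summing the residual contributions over $\le n$ rings and applying H\"older's inequality produces the factor $n^{\tau s/2}$. The case $\tau\ge1$ is handled identically, using the ordinary triangle inequality for $|\cdot|_{B^s_\tau}$ in place of the $\tau$-power subadditivity step.

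The main obstacle, as I see it, is making the good/bad dichotomy geometrically rigorous: one must ensure that when $R_i$ is good, the $S_2$-ring supplying $v_i$ on a large fraction of $R_i$ has diameter $\sim d(R_i)$ and thus contributes the right-size term to the additive upper bound for $|S_2|_{B^s_\tau}^\tau$, so the charge is not overcounted. This is precisely where condition (c) of Definition \ref{def-rings}, together with bounded eccentricity, is indispensable: it forbids the sliver pathology of Example \ref{example-1} from occurring \emph{inside} either original partition, even though the common refinement generically will contain such slivers.
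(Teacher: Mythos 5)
Your key step does not close: the additive expansion
$|S_2|_{B^s_\tau}^\tau \le c\sum_k |c_{2,k}|^\tau d(R_{2,k})^{2-s\tau}$
is only an \emph{upper} bound, and there is no matching lower bound. In the ``good'' case you propose to charge the term $|v_i|^\tau|R_i|^{\tau/p}$ against the ``corresponding ring term in the additive upper bound for $|S_2|_{B^s_\tau}^\tau$'', but that only shows
$|S_1|_{B^s_\tau}^\tau \le cn^{\tau s/2}\|S_1-S_2\|_{L^p}^\tau + c\sum_k |c_{2,k}|^\tau |R_{2,k}|^{\tau/p}$,
and the last sum can be vastly larger than $|S_2|_{B^s_\tau}^\tau$. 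Take $S_2\equiv 1$ on a compact $\Omega$ partitioned into $n$ equal-area rings: then $|S_2|_{B^s_\tau}=0$, while $\sum_k |c_{2,k}|^\tau|R_{2,k}|^{\tau/p}\sim n\cdot n^{-\tau/p}=n^{\tau s/2}$. So the chain of inequalities you set up would give an estimate with $n^{\tau s/2}$ on the right even when $\|S_1-S_2\|_{L^p}=0$, which is not what Theorem~\ref{thm:Bernstein} asserts. (A secondary problem is multiplicity: many $S_1$-rings $R_i$ may all be ``good'' for the same $S_2$-ring $R_{2,k}$, and since $\tau/p<1$ the power $|\cdot|^{\tau/p}$ is concave, so $\sum_{i} |R_i|^{\tau/p}$ can exceed $|R_{2,k}|^{\tau/p}$ by a factor up to $n^{1-\tau/p}=n^{\tau s/2}$; but this is moot given the first objection.)

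The paper avoids this trap precisely because the Bernstein-type inequality $|S_1|^\tau \le |S_2|^\tau + \cdots$ requires $|S_2|_{B^s_\tau}^\tau$ itself, not a surrogate, on the right. Its proof therefore works at the level of the moduli of smoothness, comparing $\omega_1(S_1,t)_\tau^\tau$ with $\omega_1(S_2,t)_\tau^\tau$ pointwise in $t$ rather than ring-by-ring in the seminorm. The decisive move (Case 1(e), and Remark~\ref{rem:main}) is a geometric shift: for $x$ near the long side $L_1$ of a thin trapezoid $T$, the term $|\Delta_h S_1(x)|$ is bounded by $|\Delta_h S_2(x+be_1)|$ plus two boundary corrections, and crucially the map $x\mapsto x+be_1$ is injective and lands in the set where $\Delta_h S_1$ vanishes, so no $\omega_1(S_2,\cdot)$ mass is double-counted. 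That is what produces the genuine inequality $\omega_1(S_1,t)_\tau^\tau \le \omega_1(S_2,t)_\tau^\tau + Y_1+Y_2$ with $Y_j$ controlled by $\|S_1-S_2\|_{L^p}$ after integration. Your good/bad dichotomy has no analogue of this cancellation, which is why it lands on the additive upper bound rather than on $|S_2|^\tau$.
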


In the limiting case we have this result:

\begin{thm}\label{thm:BV-Bernstein}
If  $S_1, S_2\in \cS(n, 1)$, $n\ge n_0$, then
\begin{equation}\label{BV-Bernstein}
|S_1|_{BV} \le |S_2|_{BV} + cn^{1/2}\|S_1-S_2\|_{L^2},
\end{equation}
where the constant $c>0$ depends only on the structural constants $N_0$, $c_0$, and $\beta$.
\end{thm}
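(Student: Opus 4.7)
The plan is to mimic Theorem~\ref{thm:Bernstein} at the endpoint $s=1$, $p=2$, $\tau=1$, with the $BV$ seminorm in place of $B^{s,k}_\tau$. For $S\in\cS(n,1)$ with ring partition $\cR$, one has the representation $|S|_{BV}=\int_{\cE(S)}|[S]|\,ds$, where $\cE(S)$ is the edge set of $\cR$, $ds$ is arclength, and $[S]$ denotes the pointwise jump of $S$ in the direction normal to the edge. This edge-integral representation is the starting point.

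Set $D:=S_1-S_2$ and $\cE_j:=\cE(S_j)$. At every point $x\in\cE_1$ one has the pointwise identity $[S_1]_x=[D]_x+[S_2]_x$, with all jumps measured in the normal direction to $\cE_1$ at $x$. The quantity $[S_2]_x$ vanishes outside the portion of $\cE_1\cap\cE_2$ where the two edge sets are tangent, and one checks that $\int_{\cE_1}|[S_2]|\,ds\le|S_2|_{BV}$. Integrating the pointwise triangle inequality $|[S_1]_x|\le|[D]_x|+|[S_2]_x|$ along $\cE_1$ gives
\[
|S_1|_{BV}\le|S_2|_{BV}+\int_{\cE_1}|[D]|\,ds.
\]
This reduces the theorem to the ``half Bernstein'' estimate
\[
\int_{\cE_1}|[D]|\,ds\le c\,n^{1/2}\|D\|_{L^2},
\]
in which only the ring structure of $\cR_1$ (via its edges $\cE_1$) is in play; this step replaces the subadditive Besov triangle inequality used in the $\tau\ge 1$ case of Theorem~\ref{thm:Bernstein}.

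To attack the half Bernstein estimate, fix $E\in\cE_1$ and use Definition~\ref{def-rings}(c) to extract fat triangles $\triangle_E^\pm$ adjacent to $E$ inside the two rings of $\cR_1$ abutting $E$, each of diameter $\sim\ell(E)$ and with minimum angle $\beta$. Restricted to $\triangle_E^\pm$, $D$ is piecewise constant on the common-refinement pieces $\triangle_E^\pm\cap R'_k$. The goal is a local trace-type bound $\int_E|[D]|\,ds\le c(\|D\|_{L^2(\triangle_E^+)}+\|D\|_{L^2(\triangle_E^-)})$, after which Cauchy--Schwarz over the $O(n)$ edges of $\cE_1$, combined with bounded multiplicity of the cover by the triangles $\{\triangle_E^\pm\}$, produces the required $n^{1/2}\|D\|_{L^2}$ bound. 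The main obstacle is precisely this local estimate: a common-refinement piece $\triangle_E^\pm\cap R'_k$ abutting $E$ can be arbitrarily thin even when both $\triangle_E^\pm$ and $R'_k$ are fat, since two fat convex sets may intersect in a thin lens. A thin sliver near $E$ carrying a large constant value of $D$ contributes a large jump integral but only a small $L^2$ mass inside $\triangle_E^\pm$. The resolution is to reorganize the accounting around the enlarged fat piece $R_j\cap R'_k\subset R'_k$, on which $D$ still equals the same constant and which therefore contributes a proportionally large $L^2$ mass, and to prove that these enlarged pieces have bounded overlap multiplicity as $E$ ranges over $\cE_1$. This geometric combinatorial argument is the delicate core of the proof and parallels the main construction behind Theorem~\ref{thm:Bernstein}.
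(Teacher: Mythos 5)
Your reduction to the ``half Bernstein'' estimate $\int_{\cE_1}|[D]|\,ds\le cn^{1/2}\|D\|_{L^2}$ discards precisely the cancellation that makes the theorem true, and that intermediate estimate is in fact false. The pointwise identity $[S_1]_x=[D]_x+[S_2]_x$ along $\cE_1$ is correct, but $[S_2]_x=0$ for arclength--a.e.\ $x\in\cE_1$ (the jump set of $S_2$ is $\cE_2$, which generically meets $\cE_1$ in a set of zero length), so $[D]=[S_1]$ on almost all of $\cE_1$ and $\int_{\cE_1}|[D]|\,ds$ is essentially the full quantity $|S_1|_{BV}$, not the possibly far smaller difference $|S_1|_{BV}-|S_2|_{BV}$. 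The cancellation you need occurs between a jump of $S_1$ across an edge in $\cE_1$ and a jump of $S_2$ across a nearby but \emph{distinct} edge of $\cE_2$; a decomposition carried entirely on the single curve $\cE_1$ cannot see it. Concretely, take $\Omega=[-1,1]^2$, $R=[0,1/2]^2$, $R'=[\eps,1/2+\eps]\times[0,1/2]$ with $0<\eps<1/4$, and $S_1=\ONE_R$, $S_2=\ONE_{R'}$, each completed to a ring partition by the single fat complementary ring. Then $S_1,S_2\in\cS(2,1)$, $|S_1|_{BV}=|S_2|_{BV}=2$, $\|S_1-S_2\|_{L^2}=\sqrt{\eps}$, so the theorem holds trivially; yet $D=S_1-S_2$ jumps by $\pm1$ across the left and right edges of $R$, where $\cE_2$ has no edge, so $\int_{\cE_1}|[D]|\,ds\ge 1$, which is not bounded by $cn^{1/2}\sqrt{\eps}$ as $\eps\to 0$.

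What cannot be bypassed, even at $\tau=1$ where the seminorm is subadditive, is the shift argument at the heart of the proof of Theorem~\ref{thm:Bernstein}: after the $\cU$-subdivision into $\cA\cup\cT$, the $S_1$-jump across one long side $L_1$ of a thin trapezoid $T\in\cT$ must be paired against the $S_2$-jump across the other long side $L_2$ via the translation $x\mapsto x+be_1$ of Case~1(e), so that these two large jumps offset one another instead of both being charged to $\|D\|_{L^2}$. (Using only $|[S_1]|\le|[D]|+|[S_2]|$ followed by a one-spline Bernstein bound amounts to requiring $|S_1-S_2|_{BV}\le cn^{1/2}\|S_1-S_2\|_{L^2}$, which is exactly the false estimate~\eqref{Bernstein-1} at $s=1$, $p=2$; cf.\ Example~\ref{example-1}.) On the complementary fat sets $A\in\cA$ from Lemma~\ref{lem:two-tri} the trace-type bound you describe does hold, and together with $\#\cA,\#\cT\le cn$ and bounded overlap this yields the $n^{1/2}$ factor. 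That is presumably why the paper calls the omitted proof of Theorem~\ref{thm:BV-Bernstein} ``easier'': $|S|_{BV}=\int_{\cE(S)}|[S]|\,ds$ directly, so there is no $t$-integral of a modulus of smoothness to carry along, but the $\cA\cup\cT$ decomposition and the leg matching remain essential. Your final-paragraph suggestion of enlarging to $R_j\cap R'_k$ does not repair the gap: in the example above the sliver $[0,\eps]\times[0,1/2]$ already equals $R_j\cap R'_k$ for the relevant pair of rings, it is thin, and it carries $O(1)$ of jump integral against only $O(\sqrt{\eps})$ of $L^2$ mass.
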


We next show that estimates (\ref{Bernstein-2})-(\ref{Bernstein-3}) and (\ref{BV-Bernstein})
imply the desired inverse estimate.

\begin{thm}\label{thm:inverese-1}
Let $p$, $s$, and $\tau$ be as in Theorem~\ref{thm:Bernstein} and set $\lambda :=\min\{\tau, 1\}$.
Then for any $f\in L^p(\Omega)$ we have
\begin{equation}\label{inverse-est}
K(f, n^{-s/2}) \le cn^{-s/2}
\Big(\sum_{\ell=n_0}^n \frac{1}{\ell}\big[\ell^{s/2} S_\ell^1(f)_p\big]^\lambda + \|f\|_p^\lambda\Big)^{1/\lambda},
\quad n\ge n_0.
\end{equation}
Here $K(f, t)=K(f, t; L^p, B^s_\tau)$ is the $K$-functional defined in $(\ref{def-K-functional})$
and $c>0$ is a constant depending only on $s, p$, and the structural constants of the setting.

Furthermore, in the case when $p=2$ and $s=1$ estimated $(\ref{inverse-est})$
holds with $B^s_\tau$ replaced by $BV$ and $\lambda=1$.
\end{thm}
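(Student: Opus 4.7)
The plan is the standard Jackson-plus-Bernstein-implies-inverse-estimate scheme of DeVore--Popov, adapted to the quasi-subadditive form of the Bernstein inequality. For each $\ell\ge n_0$ fix a near-best approximation $S_\ell\in\cS(\ell,1)$ with $\|f-S_\ell\|_p\le 2S_\ell^1(f)_p$, set $n_j:=2^j n_0$, and let $m$ be the largest integer with $n_m\le n$. A preliminary observation is that $\cS(\ell,1)\subset\cS(\ell',1)$ whenever $n_0\le \ell\le \ell'$, obtained by iteratively subdividing one ring into two rings compatible with Definition~\ref{def-rings} (a short geometric lemma: any ring may be cut by a straight segment into two rings of the same type). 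This lets us regard $S_{n_j}$ and $S_{n_{j+1}}$ as elements of a common class $\cS(n_{j+1},1)$.

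Applying Theorem~\ref{thm:Bernstein} -- (\ref{Bernstein-2}) if $\tau\ge 1$, (\ref{Bernstein-3}) if $\tau<1$ -- to each consecutive pair and telescoping yields
\begin{equation*}
|S_{n_m}|_{B^s_{\tau}}^{\lambda}\le |S_{n_0}|_{B^s_{\tau}}^{\lambda}+c\sum_{j=0}^{m-1}n_{j+1}^{\lambda s/2}\|S_{n_{j+1}}-S_{n_j}\|_{p}^{\lambda}.
\end{equation*}
The initial term is handled by the plain Bernstein estimate (\ref{Bernstein}) together with $\|S_{n_0}\|_p\le\|f-S_{n_0}\|_p+\|f\|_p\lesssim\|f\|_p$, giving $|S_{n_0}|_{B^s_\tau}^\lambda\lesssim\|f\|_p^\lambda$. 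For each difference, one checks that $\lambda\le p$ in both regimes, so the quasi-triangle inequality $\|u+v\|_p^\lambda\le c(\|u\|_p^\lambda+\|v\|_p^\lambda)$ holds; combined with monotonicity of $\ell\mapsto S_\ell^1(f)_p$ this yields $\|S_{n_{j+1}}-S_{n_j}\|_p^\lambda\lesssim[S_{n_j}^1(f)_p]^\lambda$.

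Since $n_{j+1}\sim n_j\sim\ell$ for $n_j\le\ell\le n_{j+1}$ and $S_\ell^1(f)_p$ is nonincreasing, standard dyadic comparison converts the geometric sum into an integral-type sum:
\begin{equation*}
\sum_{j=0}^{m-1}n_{j+1}^{\lambda s/2}[S_{n_j}^1(f)_p]^\lambda\sim\sum_{\ell=n_0}^{n}\frac{1}{\ell}\bigl[\ell^{s/2}S_\ell^1(f)_p\bigr]^\lambda.
\end{equation*}
After absorbing one more level to replace $S_{n_m}$ by $S_n$ (using again $\cS(n_m,1)\subset\cS(n,1)$), this delivers the key bound $|S_n|_{B^s_\tau}^\lambda\lesssim\|f\|_p^\lambda+\sum_{\ell=n_0}^{n}\frac{1}{\ell}[\ell^{s/2}S_\ell^1(f)_p]^\lambda$.

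Finally, using $S_n$ itself as the candidate in the infimum (\ref{def-K-functional}) defining the $K$-functional,
\begin{equation*}
K(f,n^{-s/2})\le\|f-S_n\|_p+n^{-s/2}|S_n|_{B^s_\tau}\le S_n^1(f)_p+n^{-s/2}\bigl(|S_n|_{B^s_\tau}^\lambda\bigr)^{1/\lambda},
\end{equation*}
and the first summand is absorbed into the $\ell=n$ contribution of the sum, producing (\ref{inverse-est}). The $p=2$, $s=1$, $BV$ case is identical, invoking Theorem~\ref{thm:BV-Bernstein} in place of Theorem~\ref{thm:Bernstein} with $\lambda=1$ throughout. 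The main obstacle is almost entirely bookkeeping around the quasi-norm powers when $\lambda<1$ and the ring-splitting required to place $S_{n_j}$ and $S_{n_{j+1}}$ in the same class; once the Bernstein inequality is granted, everything else is automatic.
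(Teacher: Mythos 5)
Your proof is correct and follows essentially the same route as the paper's: fix (near-)best approximants at dyadic indices, telescope the Bernstein inequality from Theorem~\ref{thm:Bernstein}, seed the base with the plain estimate (\ref{Bernstein}) together with $\|S_{n_0}\|_p\lesssim\|f\|_p$, convert the dyadic sum to the full sum, and test the $K$-functional with $S_n$. One small caveat: the aside that ``any ring may be cut by a straight segment into two rings of the same type'' is dubious as stated --- a naive cut of a genuine annular ring need not respect the fixed bound $N_0$ on edges nor condition (c) of Definition~\ref{def-rings} with the same constants --- but it is also unnecessary, since the paper's own proof of Theorem~\ref{thm:Bernstein} already reads $\cS(n,1)$ as splines over \emph{at most} $n$ rings, so the nesting $\cS(\ell,1)\subset\cS(\ell',1)$ for $\ell\le\ell'$ holds for free.
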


\begin{proof}
Let $\tau \le 1$ and $f\in L^p(\Omega)$.
We may assume that for any $n\ge n_0$ there exists $S_n\in \cS(n, 1)$ such that
$\|f-S_n\|_p = S_n^1(f)_p$.
Clearly, for any $m\ge m_0$ with $m_0:=\lceil\log_2 n_0 \rceil$ we have
\begin{equation}\label{K-est}
K(f, 2^{-ms/2}) \le \|f-S_{2^m}\|_p +2^{-ms/2}|S_{2^m}|_{B^s_\tau}.
\end{equation}
We now estimate $|S_{2^m}|_{B^s_\tau}^\tau$ using iteratively estimate (\ref{Bernstein-3}).
For $\nu\ge m_0+1$ we get
\begin{align*}
|S_{2^\nu}|_{B^s_\tau}^\tau
&\le |S_{2^{\nu-1}}|_{B^s_\tau}^\tau +c2^{\tau \nu s/2}\|S_{2^\nu}-S_{2^{\nu-1}}\|_p^\tau\\
&\le |S_{2^{\nu-1}}|_{B^s_\tau}^\tau
+c2^{\tau \nu s/2}\big(\|f-S_{2^\nu}\|_p^\tau+ \|f-S_{2^{\nu-1}}\|_p^\tau\big)\\
&\le |S_{2^{\nu-1}}|_{B^s_\tau}^\tau
+c'2^{\tau \nu s/2}S_{2^{\nu-1}}^1(f)_p^\tau.
\end{align*}
From (\ref{Bernstein}) we also have
$$
|S_{2^{m_0}}|_{B^s_\tau} \le c\|S_{2^{m_0}}\|_p
\le c\|f-S_{2^{m_0}}\|_p + c\|f\|_p = cS_{2^{m_0}}^1(f)_p + c\|f\|_p.
$$
Summing up these estimates we arrive at
$$
|S_{2^m}|_{B^s_\tau}^\tau \le c\sum_{\nu=m_0}^{m-1}2^{\tau \nu s/2}S_{2^{\nu}}^1(f)_p^\tau + c\|f\|_p^\tau.
$$
Clearly, this estimate and (\ref{K-est}) imply (\ref{inverse-est}).
The proof in the cases $\lambda>1$ or $p=2$, $s=1$, and $B^s_\tau$ replaced by $BV$ is the same.
\end{proof}

Observe that the direct and inverse estimates (\ref{direct-est}) and (\ref{Bernstein-2})-(\ref{BV-Bernstein})
imply immediately a characterization of the approximation spaces $A_q^\alpha$
associated with piecewise constant approximation from above just like in (\ref{character-app}).

\subsection{Proof of Theorems~\ref{thm:Bernstein}}\label{subsec:proof-Bernstein}

We shall only consider the case when $\Omega \subset \R^2$ is a compact polygonal domain.
The proof in the case $\Omega=\R^2$ is similar.

Assume $S_1, S_2\in \cS(n, 1)$, $n\ge n_0$.
Then $S_1, S_2$ can be represented in the form
$
S_j=\sum_{R\in \cR_j} c_R \ONE_{R},
$
where $\cR_j$ is a set of at most $n$ rings in the sense of Definition~\ref{def-rings}
with disjoint interiors and such that
$\Omega = \cup_{R\in \cR_j} R$, $j=1, 2$.

We denote by $\cU$ the set of all maximal compact connected subsets $U$ of $\Omega$ obtain by
intersecting all rings from $\cR_1$ and $\cR_2$ with the property
$\overline{U^\circ} =U$ (the closure of the interior of $U$ is $U$).
Here $U$ being maximal means that it is not contained in another such set.

Observe first that each $U\in \cU$ is obtained from the intersection of exactly
two rings $R'\in \cR_1$ and $R''\in\cR_2$,
and is a subset of $\Omega$ with polygonal boundary $\partial U$ consisting of $\le 2N_0$ line segments (edges).
Secondly, the sets in $\cU$ have disjoint interiors and
$\Omega=\cup_{U\in\cU}U$.

It is easy to see that there exists a constant $c>0$ such that
\begin{equation}\label{num-intesect}
\# \cU \le cn.
\end{equation}
Indeed, each $U\in\cU$ is obtain by intersecting two rings, say, $R'\in \cR_1$ and $R''\in \cR_2$.
If $|R'|\le |R''|$, we associate $R'$ to $U$, and
if $|R'|> |R''|$ we associate $R''$ to $U$.
However, because of condition (b) in Definition~\ref{def-rings}
every ring $R$ from $\cR_1$ or $\cR_2$ can be intersected by only finitely many,
say, $N^\star$ rings from $\cR_2$ or $\cR_1$, respectively,
of area $\ge |R|$.
Here $N^\star$ depends only on the structural constants $N_0$ and $c_0$.
Also, the intersection of any two rings may have only finitely many, say $N^{\star\star}$,
connected components.
Therefore, every ring $R\in \cR_1\cup \cR_2$ can be associated to only
$N^\star N^{\star\star}$ sets $U\in\cU$,
which implies (\ref{num-intesect}) with $c=2N^\star N^{\star\star}$.

\smallskip

Example~\ref{example-1} clearly indicates that our main problem will be in dealing with
sets $U\in \cU$ or parts of them with $\diam^2$ much larger than their area.
To overcome the problem with these sets we shall subdivide each of them using the following

\smallskip

\noindent
{\bf Construction of good triangles.}
According to Definition~\ref{def-rings}, each segment $E$ from the boundary of
every ring $R\in \cR_j$  can be subdivided into the union of at most two segments
$E_1$, $E_2$ ($E=E_1\cup E_2$) with disjoint interiors such that
there exist triangles $\triangle_1$ with a side $E_1$ and adjacent to $E_1$ angles of size $\beta >0$
and $\triangle_2$ with a side $E_2$ and adjacent to $E_2$ angles $\beta$
such that $\triangle_\ell\subset R$, $\ell=1, 2$.
We now associate with $\triangle_1$ the triangle $\tilde\tri_1 \subset \tri_1$ with one side $E_1$
and adjacent to $E_1$ angles of size $\beta/2$;
just in the same way we construct the triangle $\tilde\tri_2 \subset \tri_2$ with a side $E_2$.
We proceed in the same way for each edge $E$ from $\partial R$, $R\in \cR_j$, $j=1, 2$.
We denote by $\cT_R$ the set of all triangles $\tilde\tri_1$, $\tilde\tri_2$ associated
in the above manner with all edges $E$ from $\partial R$.
We shall call the triangles from $\cT_R$ {\em the good triangles} associated with $R$.
Observe that due to $\tri_1, \tri_2 \subset R$ for the triangles from above it readily follows that
the good triangles associated with $R$ ($R\in \cR_j$, $j=1, 2$) have disjoint interiors;
this was the purpose of the above construction.

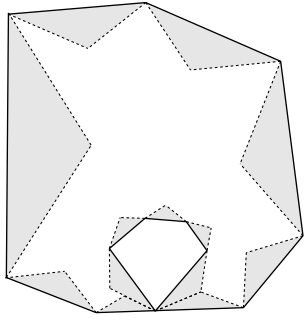
\begin{figure}[H]
\begin{center}
\begin{tikzpicture}[scale=0.55]
\clip(0.,0.) rectangle (9.,8.5);
\draw [line width=0.5pt] (4.57,7.6)-- (7.83,6.18);
\draw [line width=0.5pt] (7.83,6.18)-- (8.37,1.96);
\draw [line width=0.5pt] (8.37,1.96)-- (6.93,0.22);
\draw [line width=0.5pt] (6.93,0.22)-- (3.37,0.10);
\draw [line width=0.5pt] (3.37,0.10)-- (1.2,0.95);
\draw [line width=0.5pt] (1.2,0.94)--(1.25,7.35);
\draw [line width=0.5pt] (1.25,7.32)-- (4.57,7.6);
\draw [line width=0.5pt] (4.80,0.14)--(6.05,1.6);
\draw [line width=0.5pt] (6.05,1.6)--(5.55,2.30);
\draw [line width=0.5pt] (5.55,2.30)--(4.55,2.38);
\draw [line width=0.5pt] (4.55,2.38)--(3.69,1.65);
\draw [line width=0.5pt] (3.69,1.65)--(4.80,0.14);
\draw [dash pattern=on 1pt off 1pt] (4.80,0.14)--(5.9,0.6);
\draw [dash pattern=on 1pt off 1pt] (6.05,1.6)--(5.9,0.6);
\fill[fill=black,fill opacity=0.1](6.05,1.6)--(4.80,0.14)--(5.9,0.6)--cycle;
\draw [dash pattern=on 1pt off 1pt] (5.55,2.30)--(6.15,2.15);
\draw [dash pattern=on 1pt off 1pt] (6.05,1.6)--(6.15,2.15);
\fill[fill=black,fill opacity=0.1](5.55,2.30)-- (6.05,1.6)--(6.15,2.15)--cycle;
\draw [dash pattern=on 1pt off 1pt] (4.55,2.38)--(5.05,2.7);
\draw [dash pattern=on 1pt off 1pt] (5.55,2.30)--(5.05,2.7);
\fill[fill=black,fill opacity=0.1](4.55,2.38)--(5.55,2.30)--(5.05,2.7);--cycle;
\draw [dash pattern=on 1pt off 1pt] (3.69,1.65)--(3.95,2.4);
\draw [dash pattern=on 1pt off 1pt] (4.55,2.38)--(3.95,2.4);
\fill[fill=black,fill opacity=0.1](3.69,1.65)--(4.55,2.38)--(3.95,2.4)--cycle;
\draw [dash pattern=on 1pt off 1pt] (4.80,0.14)--(3.7,0.69);
\draw [dash pattern=on 1pt off 1pt] (3.69,1.65)--(3.7,0.69);
\fill[fill=black,fill opacity=0.1](4.80,0.14)--(3.69,1.65)--(3.7,0.69)--cycle;
\draw [dash pattern=on 1pt off 1pt] (1.2,0.95)--(3.25,4.15);
\draw [dash pattern=on 1pt off 1pt] (1.25,7.35)--(3.25,4.15);
\fill[fill=black,fill opacity=0.1](1.2,0.95)--(1.25,7.35)--(3.25,4.15)--cycle;
\draw [dash pattern=on 1pt off 1pt] (4.57,7.6)--(3.17,6.49);
\draw [dash pattern=on 1pt off 1pt] (1.25,7.32)--(3.17,6.49);
\fill[fill=black,fill opacity=0.1] (4.57,7.6)--(1.25,7.32)--(3.17,6.49)--cycle;
\draw [dash pattern=on 1pt off 1pt] (1.17,0.94)--(2.61,1.1);
\draw [dash pattern=on 1pt off 1pt] (3.37,0.10)--(2.61,1.1);
\fill[fill=black,fill opacity=0.1](1.17,0.94)--(3.37,0.10)--(2.61,1.1)--cycle;
\draw [dash pattern=on 1pt off 1pt] (3.37,0.10)--(4.05,.5);
\draw [dash pattern=on 1pt off 1pt] (4.80,0.14)--(4.05,.5);
\fill[fill=black,fill opacity=0.1](3.37,0.10)--(4.05,.5)--(4.80,0.14)--cycle;
\draw [dash pattern=on 1pt off 1pt] (4.80,0.14)--(5.9,0.6);
\draw [dash pattern=on 1pt off 1pt] (6.93,0.22)--(5.9,0.6);
\fill[fill=black,fill opacity=0.1](4.80,0.14)-- (6.93,0.22)--(5.9,0.6);
\draw [dash pattern=on 1pt off 1pt] (6.93,0.22)--(7,1.55);
\draw [dash pattern=on 1pt off 1pt] (8.37,1.96)--(7,1.55);
\fill[fill=black,fill opacity=0.1](6.93,0.22)--(8.37,1.96)--(7,1.55)--cycle;
\draw [dash pattern=on 1pt off 1pt] (8.37,1.96)--(6.85,3.7);
\draw [dash pattern=on 1pt off 1pt] (7.83,6.18)--(6.85,3.7);
\fill[fill=black,fill opacity=0.1] (8.37,1.96)--(7.83,6.18)--(6.85,3.7)--cycle;
\draw [dash pattern=on 1pt off 1pt] (7.83,6.18)--(5.65,5.97);
\draw [dash pattern=on 1pt off 1pt] (4.57,7.6)--(5.65,5.97);
\fill[fill=black,fill opacity=0.1] (7.83,6.18)-- (4.57,7.6)--(5.65,5.97)--cycle;
\end{tikzpicture}
\caption{The ring from Figure 1 with good triangles (angles $=\beta/2$).}
\end{center}
\end{figure}

From now on for every segment $E$ from $\partial R$ that has been subdivided
into $E_1$ and $E_2$ as above we shall consider $E_1$ and $E_2$ as segments from $\partial R$
in place of $E$.
We denote by $\cE_R$ the set of all (new) segments from $\partial R$.
We now associate with each $E\in \cE_R$ the good triangle which has $E$ as a side
and denote it by $\tri_E$.

To summarize, we have subdivided the boundary $\partial R$ of each ring $R \in \cR_j$, $j=1,2$,
into a set $\cE_R$ of segments with disjoint interiors
($\partial R = \cup_{E\in \cE_R} E$)
and associated with each $E\in \cE_R$ a good triangle $\tri_E \subset R$ such that
$E$ is a side of $\tri_E$ and the triangles $\{\tri_E\}_{E\in \cE_R}$
have disjoint interiors.
In addition, if $E'\subset E$ is a subsegment of $E$, then we associate with $E'$
the triangle $\tri_{E'} \subset \tri_E$ with one side $E'$ and the other two sides parallel
to the respective sides of $\tri_E$;
hence $\tri_{E'}$ is similar to $\tri_E$.
We shall call $\tri_{E'}$ a good triangle as well.

\smallskip

\noindent
{\bf Subdivision of the sets from \boldmath $\cU$.}
We next subdivide each set $U\in \cU$ by using the good triangles constructed above.
Suppose $U\in \cU$ is obtained from the intersection of rings $R'\in \cR_1$ and $R''\in \cR_2$.
Then the boundary $\partial U$ of $U$ consists of two sets of segments $\cE_U'$ and $\cE_U''$,
where each $E\in \cE_U'$ is a segment or subsegment of a segment from $\cE_{R'}$ and
each $E\in \cE_U''$ is a segment or subsegment of a segment from $\cE_{R''}$.
Clearly, $\partial U = \cup_{E\in \cE_U'\cup \cE_U''} E$ and the segments from $\cE_U'\cup \cE_U''$
have disjoint interiors.
For each $E\in \cE_U'\cup \cE_U''$ we denote by $\tri_E$ the good triangle
with a side $E$, defined above.

Consider the collection of all sets of the form
$\tri_{E_1} \cap \tri_{E_2}$ with the properties:

(a) $E_1\in \cE_{U'}$, $E_2\in \cE_{U''}$.

(b) There exists an isosceles trapezoid or an isosceles triangle $T\subset \tri_{E_1} \cap \tri_{E_2}$
such that its two legs (of equal length) are contained in $E_1$ and $E_2$, respectively,
and its height is not smaller than its larger base.
We assume that $T$ is a maximal isosceles trapezoid (or triangle) with these properties.
Observe that it may happen that there are no trapezoids like this.

We denote by $\cT_U$ the set of all trapezoids as above.
We also denote by $\cA_U$ the set of all maximal compact connected subsets $A$ of
$U \setminus \cup_{T\in\cT_U} T^\circ$.
Clearly,
$U = \cup_{T\in\cT_U} T \cup_{A\in\cA_U} A$
and the sets in $\cT_U\cup \cA_U$ have disjoint interiors.

The following lemma will be instrumental for the rest of this proof.

\begin{lem}\label{lem:two-tri}
There exist  constants $c^{\star}>1$ and $\beta^\star >0$
depending only on $N_0$, $c_0$, and $\beta$, such that
if $A\in \cA_U$ for some $U\in \cU$, then
$d(A)^2 \le c^{\star} |A|$,
and there exists a triangle $\tri\subset A$ whose minimum angle is $\ge \beta^\star$
such that
$|A| \le c^\star |\tri|$.
\end{lem}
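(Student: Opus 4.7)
The plan is to use condition~(c) of Definition~\ref{def-rings} for the rings $R'\in\cR_1$, $R''\in\cR_2$ defining $U$, together with the maximality built into the family $\cT_U$, to show that $A$ contains no long thin channel. Both conclusions of the lemma then follow by inscribing in $A$ a disk of radius comparable to $d(A)$. First observe that $A$ is a polygon of uniformly bounded combinatorial complexity: its boundary consists of subsegments of the at most $2N_0$ elements of $\cE_U'\cup\cE_U''$ together with bases of the trapezoids in $\cT_U$, and since $|\cT_U|\le|\cE_U'|\cdot|\cE_U''|\le N_0^2$, the region $A$ has at most $C_0=C_0(N_0)$ sides. In particular $d(A)\le\tfrac12\ell(\partial A)\le C_0\,\ell_{\max}(A)$, where $\ell_{\max}(A)$ denotes the length of the longest side of $A$.

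The core geometric claim is that there exists $\kappa=\kappa(N_0,c_0,\beta)>0$ such that any two sides $e,e'$ of $A$ facing each other across $A$ at distance $\delta$ and making a sufficiently small angle satisfy $\min(|e|,|e'|)\le\kappa\,\delta$. I would prove this by case analysis on the provenance of $e,e'$. \emph{Case (a):} if $e\subset E$ and $e'\subset E'$ with $E,E'\in\cE_R$ for the same ring $R\in\{R',R''\}$, then the good triangles $\tri_E,\tri_{E'}\subset R$ have disjoint interiors by the construction preceding the lemma, and both extend from $\partial R$ into the interior of $R$ on the side of $A$; long-and-close $e,e'$ would force $\tri_E$ and $\tri_{E'}$ to overlap, a contradiction. \emph{Case (b):} if $E\in\cE_{R'}$ and $E'\in\cE_{R''}$, then $\tri_E\cap\tri_{E'}$ contains an isosceles trapezoid with legs on $E,E'$, height $\gtrsim\min(|e|,|e'|)$ and larger base $\sim\delta$; as soon as $\min(|e|,|e'|)$ exceeds a fixed multiple of $\delta$, this trapezoid meets the defining conditions of $\cT_U$ and, by maximality, is absorbed into some $T\in\cT_U$ covering the channel between $e$ and $e'$, contradicting $A\cap T^\circ=\emptyset$. \emph{Case (c):} if $e$ or $e'$ is a base of some trapezoid $T\in\cT_U$, its length is bounded by the height of $T$, hence comparable to the length of the legs of $T$ lying on $\cE_U'\cup\cE_U''$; this reduces the situation to case~(a) or~(b) applied to the sides of $A$ adjacent to that trapezoid.

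Combining the key claim with $d(A)\le C_0\,\ell_{\max}(A)$ and the bounded complexity of $A$, a planar incircle/width argument produces a ball $B(x_0,r_0)\subset A$ with $r_0\ge c'\,d(A)$ for some $c'=c'(N_0,c_0,\beta)>0$. Then $|A|\ge\pi r_0^2\ge c_1\,d(A)^2$, giving $d(A)^2\le c^\star|A|$. Moreover, any equilateral triangle $\tri$ inscribed in $B(x_0,r_0)$ lies in $A$, has minimum angle $\pi/3\ge\beta^\star$, and satisfies $|\tri|\sim r_0^2\sim d(A)^2\ge c_2|A|$, so $|A|\le c^\star|\tri|$. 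The main obstacle will be case~(c) of the key claim: when sides of $A$ arise as trapezoid bases one must perform careful adjacency bookkeeping to trace the situation back to the defining segments in $\cE_U'\cup\cE_U''$ without losing control of the constants. A secondary technical point is converting the qualitative no-thin-channel statement into a quantitative lower bound on the inradius of $A$ uniformly in $N_0,c_0,\beta$, which I would handle either by a compactness argument on the finite-parameter family of polygons that can arise, or by an explicit width/sweep-line argument along a diameter of $A$.
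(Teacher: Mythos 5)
Your proposal takes a genuinely different route from the paper's. You aim for a uniform principle --- ``no thin channel in $A$'' implies that the inradius of $A$ is comparable to $d(A)$ --- whereas the paper argues case-by-case on the combinatorial type of $U$ (a connected piece of a difference $Q_1\setminus Q_0$, an intersection $Q_0\cap Q_1$, or such a region further cut by a hole). In the paper's Case~1 the controlling triangle $\tri$ is exhibited explicitly rather than obtained from an inscribed disk: $\partial A$ splits into a piece $\eta_0\subset\gamma_0\subset\partial Q_0$ and a piece $\eta_1\subset\gamma_1\subset\partial Q_1$ bounded by the two cut-off trapezoids $T_0,T_1$; bounded eccentricity of the convex sets gives $\ell(\eta_1)\le c\,\ell(\eta_0)$; and the good triangles $\tri_{I_j}$ hanging off the at most $\sim N_0$ segments $I_j$ of $\eta_0$ lie inside $A$ with disjoint interiors, so the largest one already gives $|A|\ge|\tri_{j_{\max}}|\gtrsim|I_{j_{\max}}|^2\gtrsim\ell(\eta_0)^2\gtrsim d(A)^2$. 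This delivers both conclusions of the lemma in one stroke without ever quantifying an inradius.

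The gaps in your plan are deeper than you acknowledge. First, the key claim is not well posed: ``facing each other across $A$ at distance $\delta$'' has no fixed meaning when $e,e'$ make an angle, and the conclusion $\min(|e|,|e'|)\le\kappa\delta$ degenerates near a shared vertex. Second, your Case~(a) conflates two mechanisms: when $e,e'$ come from opposite parts of $\partial R$ (inner vs.\ outer boundary), disjointness of good triangles plus Definition~\ref{def-rings}(c) does give a contradiction, but when $e,e'$ lie on the same convex boundary $\partial Q$ the good triangles are on the same side of $\partial Q$ and need not collide; there the contradiction has to come from the bounded eccentricity of $Q$, a different argument. Third, the Case~(c) reduction is unjustified: a base of $T\in\cT_U$ separates $A$ from $T$, not from another piece of $\cE_U'\cup\cE_U''$, so there is no pair $(e,e')$ of ring-boundary segments to fall back on, and one must reason about what lies on the far side of $T$. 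Finally, the passage from ``no thin channel'' to a uniform inradius bound is itself nontrivial --- the admissible family of polygons is scale-invariant but not otherwise compact, so a soft compactness argument does not close it --- and this is precisely the step the paper sidesteps with the explicit good-triangle chain along $\gamma_0$.
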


\begin{proof}
There are several cases to be considered, depending on the shapes of $U$ and $A$.
Since in each case the argument will be geometric we shall illustrate
the geometry involved in a number of figures.

\smallskip

{\em Case 1.} Let $U$ be the closure of a connected subset of $Q_1\setminus Q_0$,
where $Q_0$, $Q_1$ are convex polygonal sets just as in Definition~\ref{def-rings}.
This may happen if rings $R_0$, $R_1$ of the form
$R_0=\tilde Q_0\setminus Q_0$ and $R_1=Q_1\setminus \tilde Q_1$
intersect as illustrated in Figure 3.

\begin{figure}[H]
\begin{center}
\begin{tikzpicture}[line cap=round,scale=1.4]
\clip(1.6,0.8) rectangle (5.5,3.9);
\fill[line width=0.5pt,fill=black,fill opacity=0.1] (3.8603243657264614,1.4381262439339069) -- (4.19842002590868,1.5863325607261127) -- (4.221354207919704,2.36609474910093) -- (3.5103945655779563,3.1114556644592146) -- (2.4095538290488014,2.710107479266294) -- (2.398086738043289,1.8500756538528889) -- (2.4869862527716182,1.7918676382569592) -- (2.486986252771618,2.612665188470064) -- (3.4219325942350336,2.9502847006651858) -- (4.097171618625278,2.3269871396895767) -- (4.097171618625278,1.651748115299332) -- cycle;
\draw [line width=0.5pt] (2.3857891683227113,0.8695381013350946)-- (4.507125588987901,0.8033365374946858);
\draw [line width=0.5pt] (4.507125588987901,0.8033365374946858)-- (5.06124069041828,1.383781359834387);
\draw [line width=0.5pt] (5.06124069041828,1.383781359834387)-- (5.048227497568886,2.2800818416329443);
\draw [line width=0.5pt] (5.048227497568886,2.2800818416329443)-- (4.598994549325088,3.4330183611270697);
\draw [line width=0.5pt] (4.598994549325088,3.4330183611270697)-- (3.1749643409969446,3.8499292083136156);
\draw [line width=0.5pt] (3.1749643409969446,3.8499292083136156)-- (1.8450630323108483,3.0397626523515076);
\draw [line width=0.5pt] (1.8450630323108483,3.0397626523515076)-- (1.7061553786768533,1.7148406920848613);
\draw [line width=0.5pt] (1.7061553786768533,1.7148406920848613)-- (2.3857891683227113,0.8695381013350946);
\draw [line width=0.5pt] (3.2016877916554765,0.8440759613964758)-- (2.4869862527716187,1.6257773835920153);
\draw [line width=0.5pt] (2.4869862527716187,1.6257773835920153)-- (2.4869862527716187,2.612665188470064);
\draw [line width=0.5pt] (2.4869862527716187,2.612665188470064)-- (3.421932594235034,2.9502847006651858);
\draw [line width=0.5pt](3.421932594235034,2.9502847006651858)-- (4.0971716186252785,2.3269871396895763);
\draw [line width=0.5pt] (4.0971716186252785,2.3269871396895763)-- (4.0971716186252785,1.6517481152993323);
\draw [line width=0.5pt] (4.0971716186252785,1.6517481152993323)-- (3.2016877916554765,0.8440759613964758);
\draw [line width=0.5pt] (2.398086738043289,1.8500756538528893)-- (2.409553829048801,2.710107479266293);
\draw [line width=0.5pt] (2.409553829048801,2.710107479266293)-- (3.5103945655779576,3.111455664459215);
\draw [line width=0.5pt] (3.5103945655779576,3.111455664459215)-- (4.221354207919704,2.3660947491009314);
\draw [line width=0.5pt] (4.221354207919704,2.3660947491009314)-- (4.19842002590868,1.586332560726112);
\draw [line width=0.5pt](4.19842002590868,1.586332560726112)-- (3.3613223825063008,1.2193856485497263);
\draw [line width=0.5pt] (3.3613223825063008,1.2193856485497263)-- (2.398086738043289,1.8500756538528893);
\draw [line width=0.5pt] (3.8603243657264614,1.4381262439339069)-- (4.19842002590868,1.5863325607261127);
\draw [line width=0.5pt] (4.19842002590868,1.5863325607261127)-- (4.221354207919704,2.36609474910093);
\draw [line width=0.5pt] (4.221354207919704,2.36609474910093)-- (3.5103945655779563,3.1114556644592146);
\draw [line width=0.5pt] (3.5103945655779563,3.1114556644592146)-- (2.4095538290488014,2.710107479266294);
\draw [line width=0.5pt] (2.4095538290488014,2.710107479266294)-- (2.398086738043289,1.8500756538528889);
\draw [line width=0.5pt] (2.398086738043289,1.8500756538528889)-- (2.4869862527716182,1.7918676382569592);
\draw [line width=0.5pt] (2.4869862527716182,1.7918676382569592)-- (2.486986252771618,2.612665188470064);
\draw [line width=0.5pt] (2.486986252771618,2.612665188470064)-- (3.4219325942350336,2.9502847006651858);
\draw [line width=0.5pt] (3.4219325942350336,2.9502847006651858)-- (4.097171618625278,2.3269871396895767);
\draw [line width=0.5pt] (4.097171618625278,2.3269871396895767)-- (4.097171618625278,1.651748115299332);
\draw [line width=0.5pt] (4.097171618625278,1.651748115299332)-- (3.8603243657264614,1.4381262439339069);
\draw [line width=0.5pt] (3.333193371161619,2.613318584940028)-- (3.0999912384532227,2.4225168399967947);
\draw [line width=0.5pt] (3.0999912384532227,2.4225168399967947)-- (3.2271924017487112,2.2741154828187238);
\draw [line width=0.5pt] (3.2271924017487112,2.2741154828187238)-- (3.4497944375158167,2.3377160644664685);
\draw [line width=0.5pt] (3.4497944375158167,2.3377160644664685)-- (3.4921948252809796,2.517917712468411);
\draw [line width=0.5pt] (3.4921948252809796,2.517917712468411)-- (3.333193371161619,2.613318584940028);

\draw (3.6,3.2) node[anchor=north west] {$Q_1$};
\draw (2.5,2) node[anchor=north west] {$Q_0$};
\draw (3.4,1.9) node[anchor=north west] {$R_1$};
\draw (3.4,2.5) node[anchor=north west] {$\tilde{Q}_1$};
\draw (4.75,3.1) node[anchor=north west] {$\tilde{Q}_0$};
\draw (4.4,2) node[anchor=north west] {$R_0$};
\end{tikzpicture}
\caption{One configuration for $U=Q_1\setminus Q_0$.}
\end{center}
\end{figure}
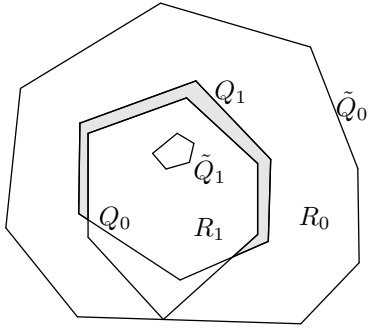

Denote
$\gamma_0:=\partial U \cap \partial Q_0$ and
$\gamma_1:=\partial U \cap \partial Q_1$.
Thus $\gamma_0$ is the ``inner" part of the boundary $\partial U$ of $U$,
which is a subset of $\partial Q_0$, and
$\gamma_1$ is the ``outer" part of $\partial U$,
which is a subset of $\partial Q_1$.
The polygons $\gamma_0$ and $\gamma_1$ may have two points of intersection, one point of intersection or none.
With no loss of generality we shall assume that $\gamma_0$ and $\gamma_1$ have two points of intersection
just as in Figure 3.

Each $\gamma_0$ and $\gamma_1$ is a polygon consisting of no more than $N_0$ segments.
For any such segment $E$ we denote by $\tri_E$ the good triangle with a side $E$
whose construction is described above.
The set $U$ with its good triangles is displayed in Figure 4.

\begin{figure}[H]
\begin{center}
\begin{tikzpicture}[scale=2.5]
\clip(5.2,1.82) rectangle (7.6,3.63);
\fill[line width=0.5pt,fill=black,fill opacity=0.1] (5.59081337827308,3.104429247297001) -- (6.287495050852281,2.9791170307528434) -- (6.691654114802241,3.5057774324899267) -- cycle;
\fill[line width=0.5pt,fill=black,fill opacity=0.1] (5.59081337827308,3.104429247297001) -- (5.846548164769672,2.6700773328146035) -- (5.579346287267565,2.244397421883602) -- cycle;
\fill[line width=0.5pt,fill=black,fill opacity=0.1] (6.691654114802241,3.5057774324899267) -- (6.830198910646261,2.933891709103345) -- (7.402613757143984,2.7604165171316466) -- cycle;
\fill[line width=0.5pt,fill=black,fill opacity=0.1] (7.402613757143984,2.7604165171316466) -- (7.169388823017498,2.3911878493093623) -- (7.379679575132958,1.9806543287568292) -- cycle;
\fill[line width=0.5pt,fill=black,fill opacity=0.1] (7.379679575132958,1.9806543287568292) -- (7.150544938996874,1.9916975080721253) -- (7.041583914950745,1.832448011964626) -- cycle;
\fill[line width=0.5pt,fill=black,fill opacity=0.1] (5.6682458019959,3.006986956500779) -- (5.4,2.6) -- (5.6682458019959,2.186189406287671) -- cycle;
\fill[line width=0.5pt,fill=black,fill opacity=0.1] (5.668245801995899,3.006986956500779) -- (6.023680674563541,3.4426765776602033) -- (6.603192143459312,3.3446064686958987) -- cycle;
\fill[line width=0.5pt,fill=black,fill opacity=0.1] (6.603192143459312,3.3446064686958987) -- (7.13170105497625,3.23916263023746) -- (7.2784311678495595,2.721308907720288) -- cycle;
line width=0.4pt,
\fill[line width=0.5pt, fill=black,fill opacity=0.1] (7.2784311678495595,2.721308907720288) -- (7.470890967347487,2.3874190725052373) -- (7.2784311678495595,2.0537338255913924) -- cycle;
\fill[line width=0.5pt,fill=black,fill opacity=0.1] (7.2784311678495595,2.0537338255913924) -- (7.222151698275247,1.8710966503401292) -- (7.041583914950745,1.832448011964626) -- cycle;

\draw  [line width=0.7pt](5.668245801995899,3.006986956500779)-- (6.60319214345931,3.3446064686958987);
\draw [line width=0.7pt](6.60319214345931,3.3446064686958987)-- (7.278431167849559,2.7213089077202883);
\draw  [line width=0.7pt](7.278431167849559,2.7213089077202883)-- (7.278431167849559,2.0537338255913906);
\draw [line width=0.7pt](7.278431167849559,2.0537338255913906)-- (7.04158391495074,1.8324480119646207);
\draw [line width=0.7pt](7.04158391495074,1.8324480119646207)-- (7.379679575132957,1.980654328756826);
\draw [line width=0.7pt](7.379679575132957,1.980654328756826)-- (7.402613757143983,2.760416517131645);
\draw [line width=0.7pt](7.402613757143983,2.760416517131645)-- (6.6916541148022395,3.5057774324899267);
\draw [line width=0.7pt](6.6916541148022395,3.5057774324899267)-- (5.590813378273079,3.104429247297002);
\draw [line width=0.7pt](5.590813378273079,3.104429247297002)--(5.579346287267566,2.2443974218836065);
\draw [line width=0.7pt](5.579346287267566,2.2443974218836065)-- (5.668245801995899,2.1861894062876766);
\draw  [line width=0.7pt](5.6682458019959,2.186189406287671)-- (5.6682458019959,3.006986956500779);

\draw [dash pattern=on 1pt off 1pt] (5.59081337827308,3.104429247297001)-- (6.287495050852281,2.9791170307528434);
\draw [dash pattern=on 1pt off 1pt] (6.287495050852281,2.9791170307528434)-- (6.691654114802241,3.5057774324899267);
\draw [dash pattern=on 1pt off 1pt](5.59081337827308,3.104429247297001)-- (5.846548164769672,2.6700773328146035);
\draw [dash pattern=on 1pt off 1pt](5.846548164769672,2.6700773328146035)-- (5.579346287267565,2.244397421883602);
\draw [dash pattern=on 1pt off 1pt] (5.579346287267565,2.244397421883602)-- (5.59081337827308,3.104429247297001);
\draw [dash pattern=on 1pt off 1pt](6.691654114802241,3.5057774324899267)-- (6.830198910646261,2.933891709103345);
\draw [dash pattern=on 1pt off 1pt](6.830198910646261,2.933891709103345)-- (7.402613757143984,2.7604165171316466);
\draw [dash pattern=on 1pt off 1pt] (7.402613757143984,2.7604165171316466)-- (7.169388823017498,2.3911878493093623);
\draw [dash pattern=on 1pt off 1pt] (7.169388823017498,2.3911878493093623)-- (7.379679575132958,1.9806543287568292);
\draw [dash pattern=on 1pt off 1pt](7.379679575132958,1.9806543287568292)-- (7.150544938996874,1.9916975080721253);
\draw [dash pattern=on 1pt off 1pt](7.150544938996874,1.9916975080721253)-- (7.041583914950745,1.832448011964626);
\draw [dash pattern=on 1pt off 1pt](5.6682458019959,3.006986956500779)-- (5.4,2.6);
\draw [dash pattern=on 1pt off 1pt](5.4,2.6)-- (5.6682458019959,2.186189406287671);

\draw [dash pattern=on 1pt off 1pt](5.668245801995899,3.006986956500779)-- (6.023680674563541,3.4426765776602033);
\draw [dash pattern=on 1pt off 1pt](6.023680674563541,3.4426765776602033)-- (6.603192143459312,3.3446064686958987);
\draw [dash pattern=on 1pt off 1pt] (6.603192143459312,3.3446064686958987)-- (7.13170105497625,3.23916263023746);
\draw [dash pattern=on 1pt off 1pt] (7.13170105497625,3.23916263023746)-- (7.2784311678495595,2.721308907720288);
\draw [dash pattern=on 1pt off 1pt](7.2784311678495595,2.721308907720288)-- (7.470890967347487,2.3874190725052373);
\draw [dash pattern=on 1pt off 1pt](7.470890967347487,2.3874190725052373)-- (7.2784311678495595,2.0537338255913924);
\draw [dash pattern=on 1pt off 1pt](7.2784311678495595,2.0537338255913924)-- (7.222151698275247,1.8710966503401292);
\draw [dash pattern=on 1pt off 1pt](7.222151698275247,1.8710966503401292)-- (7.041583914950745,1.832448011964626);
\end{tikzpicture}
\caption{The set $U$ with the good triangles associated to it.}
\end{center}
\end{figure}
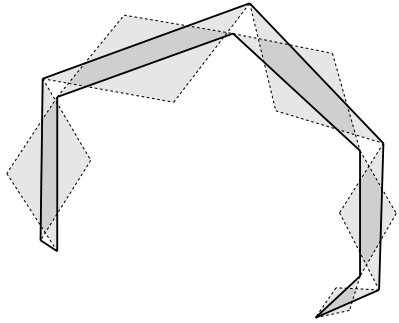

Let $E_0\subset \gamma_0$ and $E_1\subset \gamma_0$ be two edges of $\gamma_0$
such that $\tri_{E_0}\not\subset U$, $\tri_{E_1}\not\subset U$,
and either $E_0$ and $E_1$ have a common end point, say $v$ or
$E_0$ and $E_1$ are connected by a chain of segments $I_1, \dots, I_m$, $I_j\subset \gamma_0$,
such that $\tri_{I_j} \subset U$, $j=1, \dots, m$.
Denote by $v_0$ the common end point of $E_0$ and $I_1$,
and by $v_1$ the common end point of $E_1$ and $I_m$.
See Figure 5 below

\vskip 5pt

\begin{figure}[H]
\begin{center}
\begin{tikzpicture}[scale=3]
\clip(7.2,3.9) rectangle (10.6,5.95);

\fill[fill=black,fill opacity=0.1] (7.882049619464438,5.522791533950487) -- (8.01834106880721,5.313689289321217) -- (8.366535567552761,5.566486829076689) -- (8.25267278282658,5.740235415478823) -- cycle;
\fill[fill=black,fill opacity=0.1] (10.168538897165062,4.76211439753165) -- (10.30465950269715,4.863338645446963) -- (9.90636326892103,5.388804835364125) -- (9.777746742629366,5.292450388390757) -- cycle;
\draw (7.456922227024441,4.073844301181575)-- (7.459733263175691,5.275019357729001);
\draw (7.459733263175691,5.275019357729001)-- (8.604284605038165,5.9465253805186835);
\draw (8.604284605038165,5.9465253805186835)-- (9.630896607414522,5.752223830564537);
\draw (9.630896607414522,5.752223830564537)-- (10.48569991967419,4.624494764780467);
\draw (10.48569991967419,4.624494764780467)-- (10.484402213290705,3.8882271350045743);
\draw (10.484402213290705,3.8882271350045743)-- (10.3925114521977,4.458165852199138);
\draw (10.3925114521977,4.458165852199138)-- (9.60845460144276,5.52219326935398);
\draw (9.60845460144276,5.52219326935398)-- (9.462581562626308,5.640013800705728);
\draw (9.462581562626308,5.640013800705728)-- (9.307831302586154,5.6978801793444385);
\draw (9.307831302586154,5.6978801793444385)-- (8.984978159532638,5.779961486900417);
\draw (8.984978159532638,5.779961486900417)-- (8.6744373172509,5.790030868659508);
\draw (8.6744373172509,5.790030868659508)-- (7.662882677802508,5.055617989932039);
\draw (7.662882677802508,5.055617989932039)-- (7.456922227024441,4.073844301181575);
\draw [dash pattern=on 1pt off 1pt] (8.6744373172509,5.790030868659508)-- (7.956653260911887,5.714835100988549);
\draw [dash pattern=on 1pt off 1pt] (7.956653260911887,5.714835100988549)-- (7.662882677802508,5.055617989932039);
\draw [dash pattern=on 1pt off 1pt] (8.984978159532638,5.779961486900417)-- (8.83261451852636,5.874641597222815);
\draw [dash pattern=on 1pt off 1pt] (8.83261451852636,5.874641597222815)-- (8.6744373172509,5.790030868659508);
\draw [dash pattern=on 1pt off 1pt] (9.307831302586154,5.6978801793444385)-- (9.170099563565833,5.832120507647755);
\draw [dash pattern=on 1pt off 1pt] (8.984978159532638,5.779961486900417)-- (9.146404731059395,5.738920833122428);
\draw [dash pattern=on 1pt off 1pt] (9.146404731059395,5.738920833122428)-- (8.984978159532638,5.779961486900417);
\draw [dash pattern=on 1pt off 1pt] (9.170099563565833,5.832120507647755)-- (8.984978159532638,5.779961486900417);
\draw [dash pattern=on 1pt off 1pt] (9.462581562626308,5.640013800705728)-- (9.401911017248281,5.71361954217077);
\draw [dash pattern=on 1pt off 1pt] (9.401911017248281,5.71361954217077)-- (9.307831302586154,5.6978801793444385);
\draw [dash pattern=on 1pt off 1pt] (9.60845460144276,5.52219326935398)-- (9.569529939780557,5.623213454143975);
\draw [dash pattern=on 1pt off 1pt] (9.569529939780557,5.623213454143975)-- (9.462581562626308,5.640013800705728);
\draw [dash pattern=on 1pt off 1pt] (10.3925114521977,4.458165852199138)-- (10.30764128467998,5.216517277698228);
\draw [dash pattern=on 1pt off 1pt] (10.30764128467998,5.216517277698228)-- (9.60845460144276,5.52219326935398);
\draw [dash pattern=on 1pt off 1pt] (7.459733263175691,5.275019357729001)-- (8.225856025616968,5.280368856461018);
\draw [dash pattern=on 1pt off 1pt] (8.225856025616968,5.280368856461018)-- (8.604284605038165,5.9465253805186835);
\draw [dash pattern=on 1pt off 1pt] (9.630896607414522,5.752223830564537)-- (9.732750923692654,4.941598836453851);
\draw [dash pattern=on 1pt off 1pt] (9.732750923692654,4.941598836453851)-- (10.48569991967419,4.624494764780467);
\draw (7.882049619464438,5.522791533950487)-- (8.01834106880721,5.313689289321217);
\draw (8.01834106880721,5.313689289321217)-- (8.366535567552761,5.566486829076689);
\draw (8.366535567552761,5.566486829076689)-- (8.25267278282658,5.740235415478823);
\draw (8.25267278282658,5.740235415478823)-- (7.882049619464438,5.522791533950487);
\draw (10.168538897165062,4.76211439753165)-- (10.30465950269715,4.863338645446963);
\draw (10.30465950269715,4.863338645446963)-- (9.90636326892103,5.388804835364125);
\draw (9.90636326892103,5.388804835364125)-- (9.777746742629366,5.292450388390757);
\draw (9.777746742629366,5.292450388390757)-- (10.168538897165062,4.76211439753165);
\draw (8.4,5.6) node[anchor=north west] {$p_1$};
\draw (8.1,5.9) node[anchor=north west] {$p_2$};
\draw (7.7,5.7) node[anchor=north west] {$p_3$};
\draw (7.9,5.3) node[anchor=north west] {$p_4$};
\draw (8.6,5.8) node[anchor=north west] {$v_0$};
\draw (9,5.7) node[anchor=north west] {$I_j$};
\draw (9.45,5.55) node[anchor=north west] {$v_1$};
\draw (9.67,5.31) node[anchor=north west] {$q_1$};
\draw (9.85,5.55) node[anchor=north west] {$q_2$};
\draw (10.2,5.05) node[anchor=north west] {$q_3$};
\draw (9.95,4.8) node[anchor=north west] {$q_4$};
\draw (10.1,4.65) node[anchor=north west] {$E_1$};
\draw (10.35,4.9) node[anchor=north west] {$\tilde{E}_1$};
\draw (7.69,5.15) node[anchor=north west] {$E_0$};
\draw (7.5,5.55) node[anchor=north west] {$\tilde{E}_0$};
\draw (7.25,4.5) node[anchor=north west] {$\gamma_1$};
\draw (7.55,4.6) node[anchor=north west] {$\gamma_0$};
\draw (8.05,5.6) node[anchor=north west] {$T_0$};
\draw (9.9,5.2) node[anchor=north west] {$T_1$};
\end{tikzpicture}
\caption{The case $m\ge1$}
\end{center}
\end{figure}
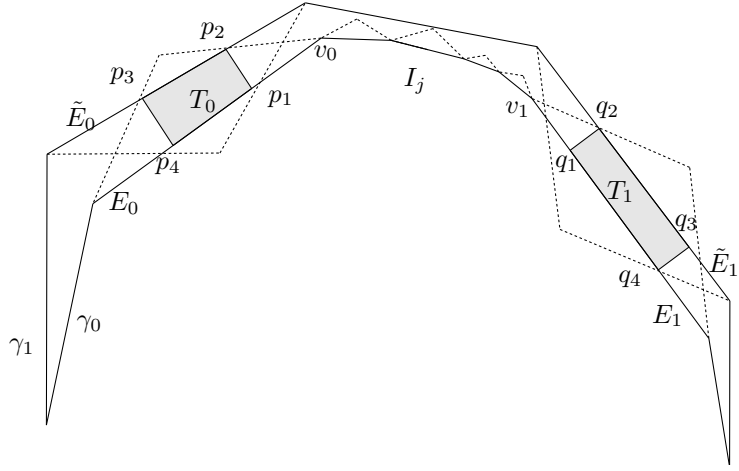

Let $\tilde E_0 \subset \gamma_1$ and $\tilde E_1 \subset \gamma_1$
be edges of $\gamma_1$ such that
$\tri_{\tilde E_0}\not\subset U$, $\tri_{\tilde E_1}\not\subset U$,
and $\tri_{\tilde E_0}\cap \tri_{E_0} \ne \emptyset$,
$\tri_{\tilde E_1}\cap \tri_{E_1} \ne \emptyset$.
Assume that there exist isosceles trapezoids
$T_0\subset \tri_{E_0} \cap \tri_{\tilde E_0}$,
$T_1\subset \tri_{E_1} \cap \tri_{\tilde E_1}$,
and $T_0$, $T_1$ are maximal.
Let $p_1, p_2, p_3, p_4$ be the vertices of $T_0$ and
$q_1, q_2, q_3, q_4$ be the vertices of $T_1$
as shown in Figure 5.

Let $\eta_0$ be the part of $\gamma_0$ enclosed by the points $p_1$ and $q_1$,
and let $\eta_1$ be the part of $\gamma_1$ between the points $p_2$ and $q_2$.

Consider now the polygonal set $A \subset U$ bounded by
$\eta_0$, $\eta_1$ and the segments $[p_1, p_2]$ and $[q_1, q_2]$.
We next show that
\begin{equation}\label{dA-A}
d(A)^2 \le c|A|
\quad\hbox{for some constant $c>0$.}
\end{equation}

Indeed,
since $Q_0$, $Q_1$ are convex sets with uniformly bounded eccentricities it is easy to see
that $\ell(\eta_1) \le c\ell(\eta_0)$.
Consider the case when $E_0$ and $E_1$ are connected by segments $I_1, \dots, I_m$.
Denote $I_0:= [p_1, v_0]$ and $\tri_{I_0}:= [p_1, v_0, p_2]$ the triangle with vertices $p_1, v_0, p_2$.
Also, denote $I_{m+1}:= [v_1, q_1]$ and set $\tri_{I_{m+1}}:= [v_1, q_1, q_2]$.
Now, let $j_{\max}:= \arg\max_{0\le j\le m+1}|\tri_{I_j}|$.
Then
$$
d(A)^2 \le c\max\{\ell(\eta_0), \ell(\eta_1)\}^2
\le c\ell(\eta_0)^2
\le c|I_{j_{\max}}|^2
\le c|\tri_{j_{\max}}| \le c|A|
$$
as claimed.
Here $\tri_{j_{\max}}$ is the triangle whose existence is claimed in Lemma~\ref{lem:two-tri}.

Just as above we establish estimate (\ref{dA-A}) for a set $A$ as above where the roles of
$\gamma_0$ and $\gamma_1$ are interchanged.

\smallskip

{\em Case 2.} Let $U$ be the closure of the $Q_0\cap Q_1$, where $Q_0,Q_1$ are convex polygonal sets as in Figure 6.
\begin{figure}[H]
\begin{center}
\begin{tikzpicture}[scale=0.45]
\clip(0.,7.5) rectangle (13,15.);
\fill[fill=black,fill opacity=0.13] (6.066849286368884,12.921681069076524) -- (6.368819823139635,12.468725263920396) -- (6.415470598933781,10.975900438507727) -- (6.210254884199399,9.418016885917249) -- (5.797044849582665,10.515160770934093) -- (6.001778101213133,12.794659901987165) -- cycle;
\draw (2.89052206733474,13.731394997192027)-- (1.7946200346838819,12.56402109284655);
\draw (1.7946200346838819,12.56402109284655)-- (1.5802044196000182,10.896344086638726);
\draw (1.5802044196000182,10.896344086638726)-- (2.104331478693907,9.32396290935706);
\draw (2.104331478693907,9.32396290935706)-- (3.009641853492442,8.347180662863906);
\draw (3.009641853492442,8.347180662863906)-- (5.082326132636456,8.299532748400825);
\draw (5.082326132636456,8.299532748400825)-- (6.1782281652873134,9.181019165967818);
\draw (6.1782281652873134,9.181019165967818)-- (6.416467737602717,10.943992001101805);
\draw (6.416467737602717,10.943992001101805)-- (6.368819823139636,12.468725263920389);
\draw (6.368819823139636,12.468725263920389)-- (5.606453191730345,13.612275211034326);
\draw (5.606453191730345,13.612275211034326)-- (2.89052206733474,13.731394997192027);
\draw (7.607665599179738,14.255522056285916)-- (6.011460464666531,12.873732536856574);
\draw (6.011460464666531,12.873732536856574)-- (5.868516721277289,11.706358632511098);
\draw (5.868516721277289,11.706358632511098)-- (5.797044849582668,10.515160770934079);
\draw (5.797044849582668,10.515160770934079)-- (6.487939609297338,8.680716064105471);
\draw (6.487939609297338,8.680716064105471)-- (7.583841641948197,8.180412962243125);
\draw (7.583841641948197,8.180412962243125)-- (9.108574904766781,8.180412962243125);
\draw (9.108574904766781,8.180412962243125)-- (10.77625191097461,8.895131679189335);
\draw (10.77625191097461,8.895131679189335)-- (11.371850841763118,11.30135135957491);
\draw (11.371850841763118,11.30135135957491)-- (10.919195654363852,13.25491585256122);
\draw (10.919195654363852,13.25491585256122)-- (9.251518648156024,13.993458526738971);
\draw (9.251518648156024,13.993458526738971)-- (7.607665599179738,14.255522056285916);
\draw (2.89052206733474,11.015463872796428)-- (3.3431772547340084,11.039287830027966);
\draw (3.3431772547340084,11.039287830027966)-- (3.4603974704633673,11.477150503296652);
\draw (3.4603974704633673,11.477150503296652)-- (3.0801883605534375,11.723940560550052);
\draw (3.0801883605534375,11.723940560550052)-- (2.7279859920673974,11.438602530749499);
\draw (2.7279859920673974,11.438602530749499)-- (2.89052206733474,11.015463872796428);
\draw (9.299166562619105,11.515766974658774)-- (9.823293621712994,11.515766974658774);
\draw (9.823293621712994,11.515766974658774)-- (10.085357151259938,11.96967432264491);
\draw (10.085357151259938,11.96967432264491)-- (9.823293621712994,12.423581670631044);
\draw (9.823293621712994,12.423581670631044)-- (9.299166562619105,12.423581670631044);
\draw (9.299166562619105,12.423581670631044)-- (9.03710303307216,11.96967432264491);
\draw (9.03710303307216,11.96967432264491)-- (9.299166562619105,11.515766974658774);
\draw (6.066849286368884,12.921681069076524)-- (6.368819823139635,12.468725263920396);
\draw (6.368819823139635,12.468725263920396)-- (6.415470598933781,10.975900438507727);
\draw (6.415470598933781,10.975900438507727)-- (6.210254884199399,9.418016885917249);
\draw (6.210254884199399,9.418016885917249)-- (5.797044849582665,10.515160770934093);
\draw (5.797044849582665,10.515160770934093)-- (6.001778101213133,12.794659901987165);
\draw (6.001778101213133,12.794659901987165)-- (6.066849286368884,12.921681069076524);
\draw (8.4,11.6) node[anchor=north west] {$\tilde{Q}_0$};
\draw (11.3,12.5) node[anchor=north west] {$Q_0$};
\draw (0.5,12.5) node[anchor=north west] {$Q_1$};
\draw (2.5,11) node[anchor=north west] {$\tilde{Q}_1$};
\end{tikzpicture}
\caption{One configuration for $U=Q_0\cap Q_1$.}
\end{center}
\end{figure}
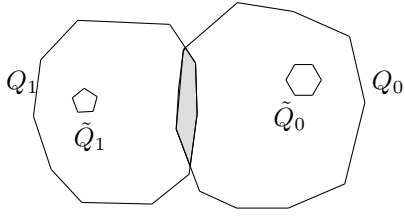

Then $\partial U$ consists of two polygonal curves $\gamma_0$ and $\gamma_1$ with
two points of intersection, each having no more than $N_0$ segments.
The argument is now simpler than the one in Case 1.

 \smallskip

{\em Case 3.}
It may also happen that we have a situation just as in Case 1,
where in addition the set $\tilde Q_1$ intersects $Q_1\setminus Q_0$ (see Figure 3)
or
the situation is as in Case~2, where $\tilde Q_0$ or $\tilde Q_1$
or both $\tilde Q_0$ and $\tilde Q_1$
intersect $Q_0\cap Q_1$ (see Figure 6).
We only consider in detail the first scenario, the second one is similar.

With the notation from Case 1, let $Q_1\setminus Q_0\ne \emptyset$
and assume that $\tilde Q_1$ intersects $Q_1\setminus Q_0$.
Let $U$ be the closure of a connected subset of $Q_1\setminus (Q_0\cup \tilde Q_1)$.
Then $U$ is subdivided by applying the procedure described above.

Several subcases are to be considered here.

 \smallskip

{\em Case 3 (a).} If $\tilde Q_1$ and the good triangles attached to $\tilde Q_1$
are contained in some set $A \in \cA_U$ from Case 1, then apparently
$|A| \le c |A\setminus\tilde Q_1|$ and hence
$$d(A)^2 \le c|A| \le c|A\setminus\tilde Q_1|.$$

 \smallskip

\begin{figure}[H]
\begin{center}
\begin{tikzpicture}[scale=7.5]
\clip(1.8,4) rectangle (3.2,4.6);
\fill[fill=black,fill opacity=0.1] (2.2053868980110396,4.242215300711261) -- (2.0875301039546255,4.361955859924884) -- (1.8925162770447033,4.1442219755108916) -- (1.9861195699217138,4.050357702207476) -- cycle;
\fill[fill=black,fill opacity=0.1] (2.8623983857331425,4.234372012694204) -- (2.9755779994259393,4.368192334770638) -- (3.174920920998595,4.17651644864308) -- (3.0857149558536947,4.070687147119672) -- cycle;

\fill[fill=black,fill opacity=0.1] (2.634646343811656,4.401307956346387) -- (2.6394972346532226,4.409328336919531) -- (2.7234294897318367,4.34650185358583) -- (2.71761429479809,4.340494744425402) -- cycle;
\draw (1.4119982405834608,2.845927033212807)-- (1.6575001483528458,3.881825326971437);
\draw (1.6575001483528458,3.881825326971437)-- (2.274248843480813,4.570428239007521);
\draw (2.274248843480813,4.570428239007521)-- (2.7652526590195827,4.570428239007521);
\draw (2.7652526590195827,4.570428239007521)-- (3.387989205556559,3.9716430981065782);
\draw (3.387989205556559,3.9716430981065782)-- (3.6275032619169343,2.7680849648956847);
\draw (3.6275032619169343,2.7680849648956847)-- (3.2861957316033994,3.9237402868345033);
\draw (3.2861957316033994,3.9237402868345033)-- (2.5,4.5);
\draw (2.5,4.5)-- (1.8071964335780806,3.893801029789456);
\draw (1.8071964335780806,3.893801029789456)-- (1.4119982405834608,2.845927033212807);
\draw [dash pattern=on 1pt off 1pt] (2.1646570342312277,4.048086770412238)-- (2.274248843480813,4.570428239007521);
\draw [dash pattern=on 1pt off 1pt] (1.6575001483528458,3.881825326971437)-- (2.1646570342312277,4.048086770412238);
\draw [dash pattern=on 1pt off 1pt] (1.978603647472274,4.396895677679342)-- (1.8071964335780806,3.893801029789456);
\draw [dash pattern=on 1pt off 1pt] (1.978603647472274,4.396895677679342)-- (2.5,4.5);
\draw [dash pattern=on 1pt off 1pt] (2.9037665511451185,4.091267112168379)-- (3.387989205556559,3.9716430981065782);
\draw [dash pattern=on 1pt off 1pt] (2.9037665511451185,4.091267112168379)-- (2.7652526590195827,4.570428239007521);
\draw [dash pattern=on 1pt off 1pt] (3.0594497160613168,4.438825302055729)-- (2.5,4.5);
\draw [dash pattern=on 1pt off 1pt] (3.0594497160613168,4.438825302055729)-- (3.2861957316033994,3.9237402868345033);
\draw (2.2053868980110396,4.242215300711261)-- (2.0875301039546255,4.361955859924884);
\draw (2.0875301039546255,4.361955859924884)-- (1.8925162770447033,4.1442219755108916);
\draw (1.8925162770447033,4.1442219755108916)-- (1.9861195699217138,4.050357702207476);
\draw (1.9861195699217138,4.050357702207476)-- (2.2053868980110396,4.242215300711261);
\draw (2.8623983857331425,4.234372012694204)-- (2.9755779994259393,4.368192334770638);
\draw (3.174920920998595,4.17651644864308)-- (3.0857149558536947,4.070687147119672);
\draw [dash pattern=on 1pt off 1pt] (2.5197507512501978,4.428687646470299)-- (2.7652526590195827,4.570428239007521);
\draw [dash pattern=on 1pt off 1pt] (2.5197507512501978,4.428687646470299)-- (2.274248843480813,4.570428239007521);
\draw (2.7652526590195827,4.570428239007521)-- (2.6499157271122504,4.483607017646278);
\draw (2.6499157271122504,4.483607017646278)-- (2.6273020577281545,4.418456890407301);
\draw (2.6273020577281545,4.418456890407301)-- (2.7340846904567173,4.338526030821008);
\draw (2.7340846904567173,4.338526030821008)-- (2.780040515320631,4.4312590849996845);
\draw (2.780040515320631,4.4312590849996845)-- (2.7652526590195827,4.570428239007521);
\draw [dash pattern=on 1pt off 1pt] (2.6273020577281545,4.418456890407301)-- (2.657619322443086,4.347665969738851);
\draw [dash pattern=on 1pt off 1pt] (2.657619322443086,4.347665969738851)-- (2.7340846904567173,4.338526030821008);
\draw (2.71761429479809,4.340494744425402)-- (2.722244027462879,4.347389217155577);
\draw (2.634646343811656,4.401307956346387)-- (2.639722624828114,4.409159623806161);

\draw (2.55,4.36) node[anchor=north west] {$T'$};
\draw (2.11,4.182934731486053) node[anchor=north west] {$T_1$};
\draw (2.9359677571572225,4.15) node[anchor=north west] {$T_2$};
\draw (2.65,4.47) node[anchor=north west] {$\tilde{Q}_1$};
\draw (2.75,4.25) node[anchor=north west] {$\gamma_0$};
\draw [->] (2.613463300052983,4.316225429964607) -- (2.67,4.37);
\end{tikzpicture}
\end{center}
\caption{The case when $\tilde{Q}_1\subset U$ and $\tilde{Q}_1$ is close to $\gamma_0$.}
\end{figure}
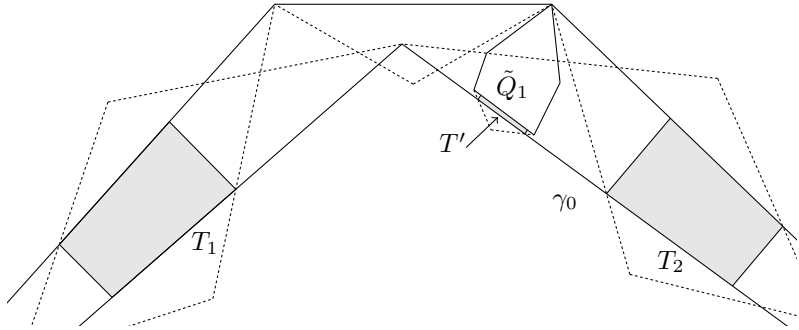
{\em Case 3 (b).} The most dangerous situation is when $\tilde Q_1$ is contained in $U$
and an edge of $\tilde Q_1$ is located close to the inner part $\gamma_0$ of $\partial U$
as shown in Figure 7.
However, in this situation a good triangle attached to $\tilde Q_1$ would intersect $\gamma_0$
(see Figure 7) and would create a trapezoid in $\cT_U$.

The set $\tilde Q_1$ may intersect $Q_1\setminus Q_0$ in various other ways.
The point is that after subtracting from $U$ the trapezoids $T\in \cT_U$ constructed above
the remaining connected components $A\in\cA_U$ cannot be uncontrollably elongated.
We omit the further details.

Also, an important point is that by construction $\tilde Q_1$ cannot intersect any trapezoid from $\cT_U$.
\end{proof}

In what follows we shall need the following obvious property of the trapezoids from $\cT$.

\begin{property}\label{property-tr}
There exists a constant $0< \ch <1$
such that if $L=[v_1,v_2]$ is one of the legs of a trapezoid $T\in\cT$
and $T\subset \tri_{E_1}\cap \tri_{E_2}$ (see the construction of trapezoids),
then for any $x\in L$ with $|x-v_j| \ge \rho$, $j=1, 2$, for some $\rho>0$
we have $B(x, \ch \rho) \subset \tri_{E_1}\cup \tri_{E_2}$.
Moreover, if $D=[v_1,v_2]$ is one of the bases of the trapezoid $T$,
then for any $x\in D$ with $|x-v_j| \ge \rho$, $j=1, 2$, for some $\rho>0$
we have $B(x, \ch \rho) \subset \tri_{E_1}\cap \tri_{E_2}$.
\end{property}

Let $\cA:= \cup_{U\in\cU} \cA_U$ and $\cT:= \cup_{U\in\cU} \cT_U$.
We have $\Omega=\cup_{A\in \cA} A \cup_{T\in\cT}T$
and, clearly, the sets in $\cA\cup \cT$ have disjoint interiors.
From these we obtain the following representation of $S_1(x)-S_2(x)$
for $x\in \Omega$ which is not on any of the edges:
\begin{equation}\label{rep-S1-S2}
S_1(x)-S_2(x) = \sum_{A\in \cA} c_A\ONE_A(x) + \sum_{T\in \cT} c_T\ONE_T(x),
\end{equation}
where $c_A$ and $c_T$ are constants.

For future reference, we note that
\begin{equation}\label{number-sets}
\# \cA \le cn \quad\hbox{and}\quad \#\cT \le cn.
\end{equation}
These estimates follow readily by (\ref{num-intesect}) and
the fact that the number of edges of each $U\in\cU$ is $\le 2N_0$.

Let $0<s/2<1/p$ and assume $\tau \le 1$.
Fix $t>0$ and
let $h\in \R^2$ with norm $|h|\le t$.
Write $\nn:=|h|^{-1}h$ and assume $\nn=:(\cos \theta, \sin\theta)$, $-\pi<\theta\le \pi$.

We shall frequently use the following obvious identities:
If $S$ is a constant on a~measurable set $G \subset \R^2$ and $H\subset G$ ($H$ measurable), then
\begin{equation}\label{triv-est-1}
\|S\|_{L^\tau(G)} = |G|^{1/\tau-1/p}\|S\|_{L^p(G)} = |G|^{s/2}\|S\|_{L^p(G)}
\end{equation}
and
\begin{equation}\label{triv-est-2}
\|S\|_{L^\tau(H)} = (|H|/|G|)^{1/\tau}\|S\|_{L^\tau(G)}.
\end{equation}

We next estimate
$\|\Delta_h S_1\|_{L^\tau(G)}^\tau - \|\Delta_h S_2\|_{L^\tau(G)}^\tau$
for different subsets $G$ of $\Omega$.

\subsection*{Case 1}

Let $T \in \cT$ be such that $d(T)> 2t/\ch$ with $\ch$ the constant from Property~\ref{property-tr}.
Denote
$$
T_h:=\{x\in\Omega: [x, x+h] \subset \Omega\;\;\hbox{and}\;\;[x, x+h]\cap T\ne \emptyset\}.
$$
We now estimate
$\|\Delta_h S_1\|_{L^\tau(T_h)}^\tau - \|\Delta_h S_2\|_{L^\tau(T_h)}^\tau$.

We may assume that $T$ is an isosceles trapezoid contained in $\tri_{E_1}\cap\tri_{E_2}$,
where $\tri_{E_j}$ ($j=1, 2$) is a good triangle for a ring $R_j\in\cR_j$,
and
$T$ is positioned so that its vertices are the points:
$$
v_1:=(-\delta_1/2, 0),\;\; v_2:=(\delta_1/2, 0),\;\; v_3:=(\delta_2/2, H),\;\; v_4:=(-\delta_2/2, H),
$$
where $0\le\delta_2\le\delta_1$ and $H >\delta_1$.
Let $L_1:=[v_1, v_4]$ and $L_2:=[v_2, v_3]$ be the two equal (long) legs of $T$.
We assume that $L_1\subset E_1$ and $L_2\subset E_2$.
We denote by $D_1:=[v_1, v_2]$ and $D_2:=[v_3, v_4]$ the two bases of $T$.
Set $\cV_T:=\{v_1, v_2, v_3, v_4\}$.
See Figure 8 below.

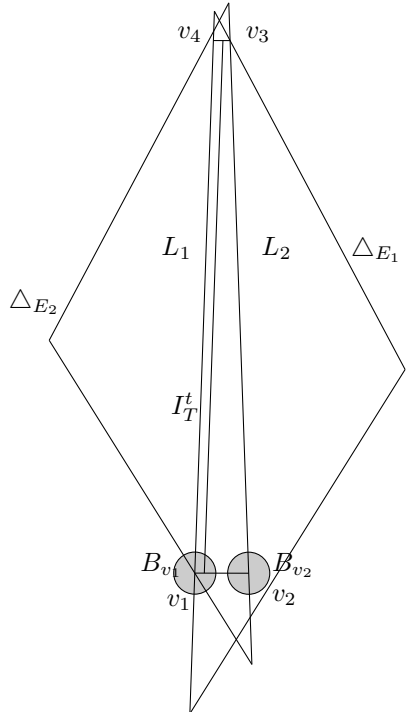
\begin{figure}[H]
\begin{center}
\begin{tikzpicture}[scale=0.7]
\clip(3,1.8) rectangle (11,16.);
\draw [fill=black,fill opacity=0.2] (7.732097903256681,4.435386181174541) circle (0.4cm);
\draw [fill=black,fill opacity=0.2] (6.707978841433643,4.43758386156472) circle (0.4cm);
\draw  (7.353526672640425,15.27624782672674)-- (3.943142175290054,8.86224744555305);
\draw  (3.943142175290054,8.86224744555305)-- (7.792637196594354,2.7017676435881626);
\draw (6.614670041512313,1.7655694334926233)-- (10.702555632195383,8.307553895032116);
\draw (7.080973692680657,15.118758895097757)-- (10.702555632195383,8.307553895032116);
\draw (7.080973692680657,15.118758895097757)-- (6.614670041512313,1.7655694334926233);
\draw (7.353526672640425,15.27624782672674)-- (7.792637196594354,2.7017676435881626);
\draw (7.0614523020116975,14.559739407466656)-- (7.378571462033556,14.559058894247723);
\draw (6.707978841433643,4.43758386156472)-- (7.732097903256681,4.435386181174541);
\draw (6.889174255170433,4.437195030204769)-- (7.242647715748484,14.559350576106675);
\draw (6.082932336361152,8.03746921885388) node[anchor=north west] {$I_T^t$};
\draw (6,4.2) node[anchor=north west] {$v_1$};
\draw (8,4.3) node[anchor=north west] {$v_2$};
\draw (7.5,15) node[anchor=north west] {$v_3$};
\draw (6.2,15) node[anchor=north west] {$v_4$};
\draw (5.5,5) node[anchor=north west] {$B_{v_1}$};
\draw (8,5) node[anchor=north west] {$B_{v_2}$};
\draw (5.9,11) node[anchor=north west] {$L_1$};
\draw (7.8,11) node[anchor=north west] {$L_2$};
\draw (3.,10) node[anchor=north west] {$\tri_{E_2}$};
\draw (9.5,11) node[anchor=north west] {$\tri_{E_1}$};

\end{tikzpicture}
\caption{A trapezoid $T$.}
\end{center}
\end{figure}

Furthermore, let $\gamma \le \pi/2$ be the angle between $D_1$ and $L_1$
and assume that  $\nn=:(\cos \theta, \sin\theta)$ with $\theta \in [\gamma, \pi]$.
The case $\theta \in [-\gamma, 0]$ is just the same.
The case when $\theta \in [0, \gamma]\cup [-\pi, -\gamma]$
is considered similarly.

Denote $B_v:=B(v, 2t/\ch)$, $v\in \cV_T$,
\begin{align*}
\cA_T^t &:=\big\{A\in\cA: d(A) > t \quad\hbox{and}\quad A\cap(T+B(0,t))\ne \emptyset \big\},\\
\fA_T^t &:=\big\{A\in\cA: d(A) \le t \quad\hbox{and}\quad A\cap(T+B(0,t))\ne \emptyset \big\}
\end{align*}
and
\begin{align*}
\cT_T^t:=\big\{T'\in\cT: d(T') > 2t/\ch\quad\hbox{and}\quad T'\cap(T+B(0,t))\ne \emptyset \big\},\\
\fT_T^t:=\big\{T'\in\cT: d(T') \le 2t/\ch\quad\hbox{and}\quad T'\cap(T+B(0,t))\ne \emptyset \big\}.
\end{align*}

\smallskip

{\em Case 1 (a).}
If $[x, x+h]\in \tri_{E_1}^\circ$, then $\Delta_h S_1(x)=0$
because $S_1$ is a constant on $\tri_{E_1}$.
Hence no estimate is needed.

\smallskip

{\em Case 1 (b).}
If $[x, x+h] \subset \cup_{v\in\cV_{T}} B_v$,
we estimate $|\Delta_h S_1(x)|$ using the obvious inequality
\begin{equation}\label{est-Del-S1-triv}
|\Delta_h S_1(x)| \le |\Delta_h S_2(x)| + |S_1(x)-S_2(x)| + |S_1(x+h)-S_2(x+h)|.
\end{equation}

Clearly, the contribution of this case to estimating
$\|\Delta_h S_1\|_{L^\tau(T_h)}^\tau - \|\Delta_h S_2\|_{L^\tau(T_h)}^\tau$~is
\begin{align*}
&\le c \sum_{v\in \cV_T}\sum_{A\in \cA_T^t}\|S_1-S_2\|_{L^\tau(B_v\cap A)}^\tau
+ c\sum_{v\in\cV_T}\sum_{T'\in\cT_T^t}\|S_1-S_2\|_{L^\tau(B_v\cap T')}^\tau\\
& + c \sum_{v\in \cV_T}\sum_{A\in \fA_T^t}\|S_1-S_2\|_{L^\tau(B_v\cap A)}^\tau
+ c\sum_{v\in\cV_T}\sum_{T'\in\fT_T^t}\|S_1-S_2\|_{L^\tau(B_v\cap T')}^\tau\\
&\le \sum_{A\in \cA_T^t}ct^2d(A)^{\tau s-2}\|S_1-S_2\|_{L^p(A)}^\tau
 + \sum_{T'\in\cT_T^t} ct^{1+\tau s/2}d(T')^{\tau s/2-1}\|S_1-S_2\|_{L^p(T')}^\tau\\
& + \sum_{A\in \fA_T^t} cd(A)^{\tau s}\|S_1-S_2\|_{L^p(A)}^\tau
+ \sum_{T'\in\fT_T^t} cd(T')^{\tau s}\|S_1-S_2\|_{L^p(T')}^\tau.
\end{align*}
Here we used these estimates, obtained using Lemma~\ref{lem:two-tri} and
(\ref{triv-est-1}) or/and (\ref{triv-est-2}):
(1) If $A\in \cA_T^t$ and $v\in \cV_T$, then
\begin{align*}
\|S_1-S_2\|_{L^\tau(B_v\cap A)}^\tau
&=(|B_v|/|A|)\|S_1-S_2\|_{L^\tau(A)}^\tau\\
&\le ct^2d(A)^{-2}\|S_1-S_2\|_{L^\tau(A)}^\tau
\le ct^2d(A)^{\tau s-2}\|S_1-S_2\|_{L^p(A)}^\tau.
\end{align*}
(2) If $T'\in\cT_T^t$ and $\delta_1(T') > 2t/\ch$
with $\delta_1(T')$ being the maximal base of $T'$,
then for any $v\in\cV_{T}$ we have
\begin{align*}
\|S_1-S_2\|_{L^\tau(B_v\cap T')}^\tau
&=(|B_v|/|T'|)\|S_1-S_2\|_{L^\tau(T')}^\tau
\le ct^2|T'|^{\tau s/2-1}\|S_1-S_2\|_{L^p(T')}^\tau\\
&\le ct^2\delta_1(T')^{\tau s/2-1}d(T')^{\tau s/2-1}\|S_1-S_2\|_{L^p(T')}^\tau\\
&\le ct^{1+\tau s/2}d(T')^{\tau s/2-1}\|S_1-S_2\|_{L^p(T')}^\tau,
\end{align*}
where we used that $\tau s/2 <1$, which is equivalent to $s<s+2/p$.

\noindent
(3) If $T'\in\cT_T^t$ and $\delta_1(T') \le 2t/\ch$,
then for any $v\in\cV_{T}$
\begin{align*}
\|S_1-S_2\|_{L^\tau(B_v\cap T')}^\tau
&=(|B_v\cap T'|/|T'|)\|S_1-S_2\|_{L^\tau(T')}^\tau\\
&= |B_v\cap T'||T'|^{\tau s/2-1}\|S_1-S_2\|_{L^p(T')}^\tau\\
&\le ct\delta_1(T')[\delta_1(T')d(T')]^{\tau s/2-1}\|S_1-S_2\|_{L^p(T')}^\tau\\
&= ct\delta_1(T')^{\tau s/2}d(T')^{\tau s/2-1}\|S_1-S_2\|_{L^p(T')}^\tau\\
&\le ct^{1+\tau s/2}d(T')^{\tau s/2-1}\|S_1-S_2\|_{L^p(T')}^\tau.
\end{align*}
(4) If $A\in \fA_T^t$, then
\begin{align*}
\|S_1-S_2\|_{L^\tau(B_v\cap A)}^\tau
&\le \|S_1-S_2\|_{L^\tau(A)}^\tau
\le c|A|^{\tau s/2}\|S_1-S_2\|_{L^p(A)}^\tau\\
&\le cd(A)^{\tau s}\|S_1-S_2\|_{L^p(A)}^\tau.
\end{align*}
(5) If $T'\in \fT_T^t$, then
\begin{align}\label{est-5}
\|S_1-S_2\|_{L^\tau(B_v\cap T')}^\tau
&\le \|S_1-S_2\|_{L^\tau(T')}^\tau
\le c|T'|^{\tau s/2}\|S_1-S_2\|_{L^p(T')}^\tau\\
&\le cd(T')^{\tau s}\|S_1-S_2\|_{L^p(T')}^\tau.\notag
\end{align}

{\em Case 1 (c).}
If $[x, x+h]\not\subset \cup_{v\in\cV_{T}} B_v$
and $[x, x+h]$ intersects $D_1$ or $D_2$, then
$\delta_1 > 2t/\ch >2t$ or $\delta_2 >2t$ and hence $[x, x+h]\subset \tri_{E_1}\cap\tri_{E_2}$,
which implies $\Delta_h S_1(x)=0$.
No estimate is needed.

\smallskip

{\em Case 1 (d).}
Let $\cITt$ be the set defined by
\begin{equation}\label{def-ITt}
\cITt:= \{x\in T: x \;\; \hbox{is between $L_1$ and $L_1+\eps e_1$}\}
\setminus \big(B(v_1, t/\ch)\cup B(v_4, t/\ch)\big),
\end{equation}
where
$\eps:= (\delta_1-\delta_2)M^{-1}t$,
$e_1:= \langle 1, 0 \rangle$, and $M:= |L_1|=|L_2|$.
Set $\ITt:=\cITt+ [0, h]$.
See Figure 8.

In this case we use again (\ref{est-Del-S1-triv}) to estimate $|\Delta_h S_1(x)|$.
We obtain
\begin{align*}
\|\Delta_h S_1\|_{L^\tau(\cITt)}^\tau
&\le \|\Delta_h S_2\|_{L^\tau(\cITt)}^\tau
+ \|S_1-S_2\|_{L^\tau(\cITt)}^\tau
\\
&+ \sum_{A\in \cA_T^t}\|S_1-S_2\|_{L^\tau(\ITt\cap A)}^\tau
+ \sum_{A\in \fA_T^t}\|S_1-S_2\|_{L^\tau(\ITt\cap A)}^\tau.
\end{align*}
Clearly, $|\cITt| \le ct\delta_1(T)$ and $|T| \sim \delta_1(T)d(T)$.
Then using (\ref{triv-est-1})-(\ref{triv-est-2}) we infer
\begin{align*}
\|S_1-S_2\|_{L^\tau(\cITt)}^\tau
&=(|\cITt|/|T|)\|S_1-S_2\|_{L^\tau(T)}^\tau
\le ct d(T)^{-1}\|S_1-S_2\|_{L^\tau(T)}^\tau\\
&= ct d(T)^{-1}|T|^{\tau s/2}\|S_1-S_2\|_{L^p(T)}^\tau
\le ct d(T)^{\tau s-1}\|S_1-S_2\|_{L^p(T)}^\tau.
\end{align*}
Similarly, for $A\in \cA_T^t$ we use that $|\ITt\cap A| \le ctd(A)$ and $|A|\sim d(A)^2$
to obtain
\begin{align*}
\|S_1-S_2\|_{L^\tau(\ITt\cap A)}^\tau
&\le ct d(A)\|S_1-S_2\|_{L^\infty(A)}^\tau
= ct d(A)|A|^{-\tau/p}\|S_1-S_2\|_{L^p(A)}^\tau\\
&\le ct d(A)^{1-2\tau/p}\|S_1-S_2\|_{L^p(A)}^\tau
\le ct d(A)^{\tau s-1}\|S_1-S_2\|_{L^p(A)}^\tau.
\end{align*}
For $A\in \fA_T^t$, we have
\begin{align*}
\|S_1-S_2\|_{L^\tau(\ITt\cap A)}^\tau
&\le \|S_1-S_2\|_{L^\tau(A)}^\tau
= |A|^{\tau s/2}\|S_1-S_2\|_{L^p(A)}^\tau\\
&\le cd(A)^{\tau s}\|S_1-S_2\|_{L^p(A)}^\tau.
\end{align*}
Putting the above estimates together we get
\begin{align*}
\|\Delta_h S_1\|_{L^\tau(\cITt)}^\tau
\le \|\Delta_h S_2\|_{L^\tau(\cITt)}^\tau
&+ ct d(T)^{\tau s-1}\|S_1-S_2\|_{L^p(T)}^\tau\\
+ \sum_{A\in \cA_T^t} ct d(A)^{\tau s-1}\|S_1-S_2\|_{L^\infty(A)}^\tau
&+ \sum_{A\in \fA_T^t}cd(A)^{\tau s}\|S_1-S_2\|_{L^p(A)}^\tau.
\end{align*}

\smallskip

{\em Case 1 (e) (Main).}
Let $T_h^\star \subset T_h$ be defined by
\begin{equation}\label{def-T-st}
T_h^\star := \{x\in T_h: [x, x+h]\cap L_1 \ne \emptyset,\;
x\not\in \cITt,\; [x,x+ h] \not\subset \bigcup_{v\in\cV_{T}} B_v\}.
\end{equation}
We next estimate
$\|\Delta_h^k S_1\|_{L^\tau(T_h^\star)}^\tau$.

Recall that by assumption
$h=|h|\nu$ with $\nn=:(\cos \theta, \sin\theta)$ and $\theta \in [\gamma, \pi]$,
where $\gamma \le \pi/2$ is the angle between $D_1$ and $L_1$.

Let $x\in T_h^\star$.
With the notation $x=(x_1, x_2)$ we let $(-a, x_2)\in L_1$ and $(a, x_2)\in L_2$, $a>0$,
be the points of intersection of the horizontal line through $x$ with $L_1$ and $L_2$.
Set $b:=2a-\eps$ with $\eps:= (\delta_1-\delta_2)M^{-1}t$, see (\ref{def-ITt}).

We associate the points $x+be_1$ and $x+be_1+h$ to $x$ and $x+h$.
A~simple geometric argument shows that
$x+be_1\in \tri_{E_1}\setminus T$, while $x+be_1+h \in T^\circ$.

Now, using that
$S_1 = \constant$ on $\tri_{E_1}^\circ$
we have
$S_1(x)=S_1(x+be_1)$
and since $S_2 = \constant$ on $\tri_{E_2}^\circ$
we have
$S_2(x+h)=S_2(x+be_1+h)$.
We use these two identities to obtain
\begin{align*}
S_1(x+h)-S_1(x)&= S_2(x+be_1+h)-S_2(x+be_1)\\
&+[S_1(x+h)-S_2(x+h)] - [S_1(x+be_1)-S_2(x+be_1)]
\end{align*}
and, therefore,
\begin{align}\label{Delta-shift}
|\Delta_h S_1(x)|
&\le |\Delta_h S_2(x+be_1)|\\
&+|S_1(x+h)-S_2(x+h)| + |S_1(x+be_1)-S_2(x+be_1)|.\notag
\end{align}
Some words of explanation are in order here.
The purpose of the set $\cITt$ is that
there is one-to-one correspondence between pairs of points
$x\in T^\circ\setminus \cITt$, $x+h\in \tri_{E_2}\setminus T$
and $x+be_1 \in \tri_{E_1}\setminus T$, $x+be_1+h\in T^\circ$.
Due to $\delta_2<\delta_1$, this would not be true if $\cITt$ was not removed from $T^\circ$.
Thus there is one-to-one correspondence between the differences
$|\Delta_h S_1(x)|$ and $|\Delta_h S_2(x+be_1)|$ in the case under consideration.
Also, it is important that $\Delta_h S_1(x+be_1)=0$
and hence $|\Delta_h S_2(x+be_1)|$ need not be used to estimate $|\Delta_h S_1(x+be_1)|$.

Another important point here is that $x+h\not\in T^\circ$ and $x+be_1 \not\in T^\circ$.
Therefore, no quantities $|S_1(x)-S_2(x)|$ with $x\in T^\circ\setminus \cITt$
are involved in (\ref{Delta-shift}), which is critical.

Observe that for $x\in T_h^\star$ we have
$$
[x,x+h] \not\subset \bigcup_{v\in\cV_{T}} B_v
\quad\hbox{and hence}\quad
[x+be_1,x+be_1+h] \not\subset \bigcup_{v\in\cV_{T}} B_v.
$$
Therefore, by Property~\ref{property-tr} it follows that $[x, x+h]$ and $[x+be_1,x+be_1+h]$ do not intersect
any trapezoid $T'\in\cT$, $T'\ne T$.

Let $T_h^{\star\star} :=\{x+be_1: x\in T_h^\star\}$.
For any $A\in\cA$ and $t>0$ define
\begin{equation}\label{def-At}
A_t:=\{x\in A: \dist(x, \partial A) \le t\}.
\end{equation}

From all of the above we get
\begin{align*}
\|\Delta_h S_1\|_{L^\tau(T_h^\star)}^\tau
\le \|\Delta_h S_2\|_{L^\tau(T_h^{\star\star})}^\tau
+ \sum_{A\in \cA_T^t}\|S_1-S_2\|_{L^\tau(A_t)}^\tau
+ \sum_{A\in \fA_T^t}\|S_1-S_2\|_{L^\tau(A)}^\tau.
\end{align*}
Now, using that $|A_t| \le ctd(A)$ and $|A|\sim d(A)^2$
for $A\in \cA_T^t$ we obtain
\begin{align}\label{est-norm-At}
\|S_1-S_2\|_{L^\tau(A_t)}^\tau
&=(|A_t|/|A|)|A|^{\tau s/2}\|S_1-S_2\|_{L^p(A)}^\tau\\
&\le ct d(A)^{\tau s-1}\|S_1-S_2\|_{L^p(A)}^\tau.\notag
\end{align}
For $A\in \fA_T^t$ we use that $|A|\sim d(A)^2$ and obtain
\begin{align}\label{est-norm-A}
\|S_1-S_2\|_{L^\tau(A)}^\tau
=|A|^{\tau s/2}\|S_1-S_2\|_{L^p(A)}^\tau
\le cd(A)^{\tau s}\|S_1-S_2\|_{L^p(A)}^\tau.
\end{align}
Inserting these estimates above we get
\begin{align}\label{est-Th-star}
\|\Delta_h S_1\|_{L^\tau(T_h^\star)}^\tau
\le \|\Delta_h S_2\|_{L^\tau(T_h^{\star\star})}^\tau
&+ \sum_{A\in \cA_T^t} ct d(A)^{\tau s-1}\|S_1-S_2\|_{L^p(A)}^\tau\\
&+ \sum_{A\in \fA_T^t}cd(A)^{\tau s}\|S_1-S_2\|_{L^p(A)}^\tau.\notag
\end{align}

\subsection*{Case 2}

Let $\Omega_h^\star$ be the set of all $x\in \Omega$ such that
$[x, x+h] \subset \Omega$
and
$[x, x+h] \cap T = \emptyset$ for all $T\in\cT$ with $d(T) \ge 2t/\ch$.
To estimate $|\Delta_h S_1(x)|$ we use again (\ref{est-Del-S1-triv}).
With the notation from (\ref{def-At}) we get
\begin{align*}
\|\Delta_hS_1\|_{L^\tau(\Omega_h^\star)}^\tau
\le \|\Delta_h S_2\|_{L^\tau(\Omega_h^\star)}^\tau
&+ \sum_{T\in\cT: d(T) \le 2t/\ch} \|S_1-S_2\|_{L^\tau(T)}^\tau\\
+ \sum_{A\in\cA: d(A) >t} \|S_1-S_2\|_{L^\tau(A_t)}^\tau
&+ \sum_{A\in\cA: d(A) \le t} \|S_1-S_2\|_{L^\tau(A)}^\tau\\
\end{align*}
For the first sum above we have just as in (\ref{est-5})
\begin{align*}
\sum_{T\in\cT: d(T) \le 2t/\ch} \|S_1-S_2\|_{L^\tau(T)}^\tau
\le \sum_{T\in\cT: d(T) \le 2t/\ch}
c d(T)^{\tau s}\|S_1-S_2\|_{L^p(T)}^\tau.
\end{align*}
We estimate the other two sums as in (\ref{est-norm-At}) and (\ref{est-norm-A}).
We obtain
\begin{align*}
\|\Delta_hS_1\|_{L^\tau(\Omega_h^\star)}^\tau
\le \|\Delta_h S_2\|_{L^\tau(\Omega_h^\star)}^\tau
&+ \sum_{T\in\cT: d(T) \le 2t/\ch}c d(T)^{\tau s}\|S_1-S_2\|_{L^p(T)}^\tau\\
+ \sum_{A\in\cA: d(A) >t}ct d(A)^{\tau s-1}\|S_1-S_2\|_{L^p(A)}^\tau
&+ \sum_{A\in\cA: d(A) \le t}cd(A)^{\tau s}\|S_1-S_2\|_{L^p(A)}^\tau.
\end{align*}

It is an important observation that each trapezoid $T\in\cT$ with $d(T) > 2t/\ch$
may share trapezoids $T'\in \fT_T^t$ and sets $A\in \fA_T^t$ with only finitely many
trapezoids with the same properties.
Also, for every such trapezoid $T$ we have $\# \cT_T^t \le c$ and $\# \cA_T^t \le c$
with $c>0$ a constant depending only on the structural constants of the setting.
Therefore, in the above estimates only finitely many norms may overlap at a time.
Putting all of them together we obtain
\begin{align*}
\omega_1(S_1, t)_\tau^\tau
\le \omega_1(S_2, t)_\tau^\tau + Y_1 + Y_2,
\end{align*}
where
\begin{align*}
Y_1
&= \sum_{A\in\cA: d(A) >t}ct d(A)^{\tau s-1}\|S_1-S_2\|_{L^p(A)}^\tau\\
&+ \sum_{A\in\cA: d(A) >t}ct^2d(A)^{\tau s-2}\|S_1-S_2\|_{L^p(A)}^\tau\\
&+ \sum_{A\in\cA: d(A) \le t}cd(A)^{\tau s}\|S_1-S_2\|_{L^p(A)}^\tau
\end{align*}
and
\begin{align*}
Y_2
&= \sum_{T\in\cT: d(T) > 2t/\ch}ct d(T)^{\tau s-1}\|S_1-S_2\|_{L^p(T)}^\tau\\
&+ \sum_{T\in\cT: d(T) > 2t/\ch}ct^{1+\tau s/2}d(T)^{\tau s/2-1}\|S_1-S_2\|_{L^p(T)}^\tau\\
&+ \sum_{T\in\cT: d(T) \le 2t/\ch}c d(T)^{\tau s}\|S_1-S_2\|_{L^p(T)}^\tau.
\end{align*}
%
%
We now turn to the estimation of $|S_1|_{B^s_\tau}$.
Using the above and interchanging the order of integration and summation we get
\begin{align*}
|S_1|_{B^s_\tau}^\tau
=\int_0^\infty t^{-s\tau-1}\omega_1(S_1, t)_\tau^\tau dt
\le |S_2|_{B^s_\tau}^\tau + Z_1 + Z_2,
\end{align*}
where
\begin{align*}
Z_1 &= \sum_{A\in\cA}cd(A)^{\tau s-1}\|S_1-S_2\|_{L^p(A)}^\tau \int_0^{d(A)}t^{-\tau s} dt\\
&+ \sum_{A\in\cA}cd(A)^{\tau s-2}\|S_1-S_2\|_{L^p(A)}^\tau \int_0^{d(A)}t^{-\tau s+1} dt\\
&+ \sum_{A\in\cA}cd(A)^{\tau s}\|S_1-S_2\|_{L^p(A)}^\tau \int_{d(A)}^\infty t^{-\tau s-1} dt
\end{align*}
and
\begin{align*}
Z_1 &=  \sum_{T\in\cT}cd(T)^{\tau s-1}\|S_1-S_2\|_{L^p(T)}^\tau \int_0^{\ch d(T)/2}t^{-\tau s} dt\\
&+ \sum_{T\in\cT}cd(T)^{\tau s/2-1}\|S_1-S_2\|_{L^p(T)}^\tau \int_0^{\ch d(T)/2}t^{-\tau s/2} dt\\
&+ \sum_{T\in\cT}cd(T)^{\tau s}\|S_1-S_2\|_{L^p(T)}^\tau \int_{\ch d(T)/2}^\infty t^{-\tau s-1} dt.
\end{align*}
%
Observe that $-\tau s>-1$ is equivalent to $s/2<1/p$, which is one of the assumptions,
and $-\tau s/2>-1$ is equivalent to $s<s+2/p$, which is obvious.
Therefore, all of the above integrals are convergent, and we obtain
\begin{align*}
|S_1|_{B^s_\tau}^\tau
\le |S_2|_{B^s_\tau}^\tau
+ \sum_{A\in\cA}c\|S_1-S_2\|_{L^p(A)}^\tau
+ \sum_{T\in\cT}c\|S_1-S_2\|_{L^p(T)}^\tau.
\end{align*}
Finally, applying H\"{o}lder's inequality and using (\ref{number-sets}) we arrive at
\begin{align*}
|S_1|_{B^s_\tau}^\tau
\le |S_2|_{B^s_\tau}^\tau
&+ c\big(\#\cA\big)^{\tau(1/\tau-1/p)}\Big(\sum_{A\in\cA} \|S_1-S_2\|_{L^p(A)}^p\Big)^{\tau/p}\\
&+ c\big(\#\cT\big)^{\tau(1/\tau-1/p)}\Big(\sum_{T\in\cT} \|S_1-S_2\|_{L^p(T)}^p\Big)^{\tau/p}\\
& \le cn^{\tau(1/\tau-1/p)}\|S_1-S_2\|_{L^p(\Omega)}^{\tau}
= cn^{\tau s/2}\|S_1-S_2\|_{L^p(\Omega)}^{\tau}.
\end{align*}
This confirms estimate (\ref{Bernstein-3}).
The proof in the case when $\tau >1$ is the same.
\qed

The proof of Theorem~\ref{thm:BV-Bernstein} is easier than the above proof. We omit it.

\section{Nonlinear approximation from smooth splines}\label{sec:linear-splines}

In this section we focus on Bernstein estimates in nonlinear approximation
in $L^p$, $0<p< \infty$, from regular nonnested smooth piecewise polynomial functions in~$\R^2$.

\subsection{Setting and approximation tool}\label{subsec:setting-2}

We first introduce the class of regular piecewise polynomials
$\cS(n, k)$ of degree $k-1$ with $k\ge 2$ over $n$ rings of maximum smoothness.
As in \S\ref{sec:piece-const} we introduce two versions of this class depending on
whether $\Omega$ is compact or $\Omega=\R^2$.

{\em Case 1:} $\Omega$ is a compact polygonal domain in $\R^2$.
We denote by $\cS(n, k)$ ($n\ge n_0$) the set of all piecewise polynomials $S$ of the form
\begin{equation}\label{def-smooth-splines-1}
S=\sum_{j=1}^n P_j\ONE_{R_j},
\quad S\in W^{k-2}(\Omega),
\quad P_j\in\Pi_k,
\end{equation}
where $R_1, \dots, R_n$ are rings in the sense of Definition~\ref{def-rings} with disjoint interiors
such that $\Omega=\cup_{j=1}^n R_j$.
Here $\Pi_k$ stands for the set of all polynomials of degree $<k$ in two variables
and $S\in W^{k-2}(\Omega)$ means that all partial derivatives
$\partial^\alpha S \in C(\Omega)$, $|\alpha| \le k-2$.

\smallskip

{\em Case 2:} $\Omega =\R^2$.
In this case we denote by $\cS(n, k)$ the set of all piecewise polynomials $S$ of degree $k-1$ on $\R^2$
of the form (\ref{def-splines-1}),
where $R_1, \dots, R_n$ are rings with disjoint interiors
such that the support $\Lambda=\cup_{j=1}^n R_j$ of $S$ is a ring in the sense of Definition~\ref{def-rings}.

\smallskip

We denote by $S_n^k(f)_p$ the best approximation of $f\in L^p(\Omega)$ from $\cS(n,k)$
in $L^p(\Omega)$, $0 < p<\infty$, i.e.
\begin{equation}\label{def-best-smooth-app}
S_n^{k}(f)_p:= \inf_{S\in \cS(n, k)}\|f-S\|_{L^p}.
\end{equation}

\begin{xrem}
Observe that in our setting the splines are of maximum smoothness and this is critical
for our development.
As will be shown in Example~\ref{example-maxsmooth} below in the nonnested case our Bernstein type inequality
is not valid in the case when the smoothness of the splines is not maximal.
\end{xrem}

We next consider several scenarios for constructing of regular piecewise polynomials
of maximum smoothness:

\smallskip

{\em Example 1.} Suppose that $\cT_0$ is an initial subdivision of $\Omega$ into triangles
which obey the minimum angle condition and is with no hanging vertices in the interior of $\Omega$.
In the case of $\Omega=\R^2$ we assume for simplicity that the triangles $\tri\in\cT_0$
are of similar areas, i.e. $c_1 \le |\tri_1|/|\tri_2| \le c_2$ for any $\tri_1, \tri_2\in \cT_0$.
Next we subdivide each triangle $\tri\in\cT_0$ into 4 triangles by introducing the mid points on the sides
of $\tri$. The result is a triangulation $\cT_1$ of $\Omega$.
In the same way we define the triangulations $\cT_2$, $\cT_3$, etc.
Each triangulation $\cT_j$ supports Courant hat functions (linear finite elements)
$\varphi_\theta$, each of them supported on the union $\theta$ of all triangles from $\cT_j$ which
have a common vertex, say, $v$.
Thus $\varphi_\theta(v)=1$, $\varphi_\theta$ takes values zero
at all other vertices of triangles from $\cT_j$, and $\varphi_\theta$ is continuous
and piecewise linear over the triangles from $\cT_j$.
Clearly, each piecewise liner function over the triangles from $\cT_j$
can be represented as a linear combination of Courant hat functions like these.

Denote by $\Theta_j$ the set of all supports $\theta$ of Courant elements supported by $\cT_j$
and set $\Theta:= \cup_{j\ge 0} \Theta_j$.
Consider the nonlinear set $\bS_n$ of all piecewise linear functions $S$ of the form
$$
S=\sum_{\theta\subset \cM_n} c_\theta \varphi_\theta,
$$
where $\cM_n\subset \Theta$ and $\# \cM_n \le n$; the elements $\theta\in\cM_n$
may come from different levels and locations.
It is not hard to see that $\bS_n \subset S(cn, 2)$, see \cite{KP1}.

\smallskip

{\em Example 2.} More generally, one can consider piecewise linear functions $S$ of the form
$$
S=\sum_{\theta\subset \cM_n} c_\theta \varphi_\theta,
$$
where $\{\varphi_\theta\}$ are Courant hat functions as above,
$\# \cM_n \le n$, and
$\cM_n$ consists of cells $\theta$ as above that are not necessarily induced by a hierarchical
collection of triangulations of $\Omega$, however, there exists a underlying subdivision of $\Omega$
into rings obeying the conditions from \S\ref{subsec:setting-1}.

\smallskip

{\em Example 3.} The $C^1$ quadratic box-splines on the four-directional mesh
(the so called ``Powell-Zwart finite elements")
and
the piecewise cubics in $\R^2$ or on a rectangular domain,
endowed with the Powell--Sabin triangulation generated by a uniform 6-direction mesh
provide examples of quadratic and cubic splines of maximum smoothness.

Other examples are to be identified or developed.

\subsection*{Splines with defect}

To make the difference between approximation from nonnested and nested splines more transparent
and for future references
we now introduce the splines with arbitrary smoothness.
Given a set $\Omega\subset \R^2$ with polygonal boundary or $\Omega:=\R^2$, $k\ge 2$, and $0\le r\le k-2$,
we denote by $\cS(n, k, r)$ ($n\ge n_0$) the set of all piecewise polynomials $S$ of the form
\begin{equation}\label{def-smooth-splines-2}
S=\sum_{j=1}^n P_j\ONE_{R_j},
\quad S\in W^{r}(\Omega),
\quad P_j\in\Pi_k,
\end{equation}
where $R_1, \dots, R_n$ are rings with disjoint interiors
such that $\Omega=\cup_{j=1}^n R_j$.
We set
\begin{equation}\label{def-best-smooth-app-2}
S_n^{k, r}(f)_p:= \inf_{S\in \cS(n, k, r)}\|f-S\|_{L^p}.
\end{equation}

\subsection{Jackson estimate}\label{subsec:smooth-Jackson-est}

Jackson estimates in 
spline approximation are relatively easy to prove.
Such estimates (also in anisotropic settings) are established in \cite{DP,KP1}.
For example the Jackson estimate we need in the case of approximation
from piecewise linear functions ($k=2$) follows by \cite[Theorem 3.6]{KP1}
and takes the form:

\begin{thm}\label{thm:smooth-Jackson}
Let $0<p<\infty$, $s>0$, and $1/\tau=s/2+1/p$.
Assume $\Omega=\R^2$ or $\Omega\subset \R^2$ is a compact set with polygonal boundary
and initial triangulation consisting of $n_0$ triangles with no hanging interior vertices
and obeying the minimum angle condition.
Then for any $f\in B^{s,2}_{\tau}$ we have $f\in L^p(\Omega)$ and
\begin{equation}\label{smooth-Jackson-1}
S_n^2(f)_p  \le cn^{-s/2}|f|_{B^{s,2}_{\tau}} , \quad n\ge n_0.
\end{equation}
Consequently, for any $f\in L^p(\Omega)$
\begin{equation}\label{smooth-direct-est}
S_n^2(f)_p  \le cK(f, n^{-s/2}) , \quad n\ge n_0.
\end{equation}
Here $K(f, t)=K(f, t; L^p, B^s_\tau)$ is the $K$-functional defined in $(\ref{def-K-functional})$
and $c>0$ is a constant depending only on $s,p$, and the structural constants of the setting.
\end{thm}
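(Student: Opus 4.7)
The plan is to reduce Theorem 4.1 to the Jackson estimate for nested hierarchical piecewise-linear splines proved in \cite{KP1}, and then to deduce the $K$-functional version by a standard real-interpolation-style argument.

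First, I would build a hierarchy of triangulations $\cT_0, \cT_1, \cT_2, \dots$ of $\Omega$ starting from the initial triangulation $\cT_0$ by repeated midpoint (dyadic) subdivision, exactly as in Example~1 of \S\ref{subsec:setting-2}. Since $\cT_0$ satisfies the minimum angle condition, every $\cT_j$ does too with the same angle bound. For each $j$, let $\Theta_j$ denote the collection of Courant cells (stars of interior vertices of $\cT_j$), set $\Theta := \bigcup_{j\ge 0}\Theta_j$, and let $\bS_n$ be the nonlinear set of $n$-term linear combinations of Courant hat functions with supports in $\Theta$. In the case $\Omega=\R^2$, the assumption that the triangles of $\cT_0$ have comparable areas ensures that the Courant stars at every level are regular with uniformly bounded eccentricity.

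Second, I would invoke \cite[Theorem~3.6]{KP1}, which gives exactly the Jackson-type estimate for approximation from $\bS_n$: for every $f\in B^{s,2}_\tau$ with $1/\tau=s/2+1/p$,
\begin{equation*}
\inf_{S\in \bS_n}\|f-S\|_{L^p(\Omega)} \le cn^{-s/2}|f|_{B^{s,2}_\tau}, \qquad n\ge n_0,
\end{equation*}
with $c$ depending only on $s$, $p$, and the parameters of $\cT_0$. Next, as noted in Example~1 of \S\ref{subsec:setting-2}, each Courant support $\theta\in\Theta$ is a finite union of triangles of some $\cT_j$ sharing a vertex, hence a compact convex polygonal set with bounded eccentricity and no narrow elongated subregions; in particular $\theta$ is a ring in the sense of Definition~\ref{def-rings} with constants determined by $\cT_0$. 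Consequently $\bS_n \subset \cS(c_1 n, 2)$ for some fixed $c_1\ge 1$, which gives
\begin{equation*}
S_{c_1n}^2(f)_p \le \inf_{S\in \bS_n}\|f-S\|_{L^p} \le cn^{-s/2}|f|_{B^{s,2}_\tau}.
\end{equation*}
Replacing $n$ by $\lfloor n/c_1\rfloor$ and adjusting the constant yields (\ref{smooth-Jackson-1}). The embedding $B^{s,2}_\tau \subset L^p(\Omega)$ (standard for $1/\tau=s/2+1/p$) is also supplied by \cite{KP1}.

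Third, the direct estimate (\ref{smooth-direct-est}) follows by a routine argument: for any $g\in B^{s,2}_\tau$ we have
\begin{equation*}
S_n^2(f)_p \le \|f-g\|_p + S_n^2(g)_p \le \|f-g\|_p + cn^{-s/2}|g|_{B^{s,2}_\tau},
\end{equation*}
and taking the infimum over $g$ gives $S_n^2(f)_p \le cK(f, n^{-s/2}; L^p, B^{s,2}_\tau)$. The only real subtlety, which I view as the main (mild) obstacle, is the bookkeeping step verifying that $\bS_n \hookrightarrow \cS(c_1 n, 2)$ with $c_1$ and the resulting structural constants $N_0$, $c_0$, $\beta$ independent of the levels and locations of the selected Courant cells; once this is done, the Jackson estimate of \cite{KP1} transfers directly to our nonnested framework.
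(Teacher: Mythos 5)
Your overall strategy is the same one the paper uses: invoke \cite[Theorem~3.6]{KP1} for the hierarchical Courant-system sets $\bS_n$, pass through the inclusion $\bS_n \subset \cS(cn,2)$, and then get (\ref{smooth-direct-est}) from (\ref{smooth-Jackson-1}) by the standard $K$-functional argument. The paper itself offers no more than a citation to \cite{KP1} and the remark in Example~1 of \S\ref{subsec:setting-2} that $\bS_n\subset\cS(cn,2)$, so your proposal is the same route with an attempted fleshing-out of the bookkeeping step.

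However, the justification you supply for $\bS_n\subset\cS(c_1 n,2)$ contains a genuine error. You assert that each Courant support $\theta$ (the star of triangles of $\cT_j$ around a vertex) is ``a compact convex polygonal set'' and hence a ring in the sense of Definition~\ref{def-rings}. This is false in general: the star of an interior vertex is convex only when the sum of the two triangle angles meeting at each boundary vertex of the star does not exceed $\pi$, which has no reason to hold for an arbitrary minimum-angle triangulation. Courant stars are star-shaped but typically non-convex polygons. Moreover, even if each $\theta$ were a ring, the inclusion $\bS_n\subset\cS(c_1 n,2)$ would not follow from that fact alone: the Courant cells selected for $S=\sum_{\theta\in\cM_n}c_\theta\varphi_\theta$ overlap and live at many different scales, and the assertion you need is that the regions of linearity of $S$ (the cells of the common refinement of all the relevant triangulations restricted to $\supp S$) can be organised into at most $cn$ rings with disjoint interiors forming a partition of $\Omega$ (or of a ring $\Lambda$ when $\Omega=\R^2$). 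That is a global statement about the nested structure of the $\cT_j$, and it is what \cite{KP1} actually establishes; it does not reduce to each $\theta$ individually being a ring. In short, the high-level plan is right and agrees with the paper, but the specific convexity claim should be dropped and replaced by a reference to the partition-into-rings property of the nested dyadic refinement, as in \cite{KP1} and Example~1 of \S\ref{subsec:setting-2}.
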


\smallskip

Similar Jackson and direct estimates for nonlinear approximation from splines of degrees 2 and higher
and of maximum smoothness do not follow automatically from the results in \cite{DP}.
The reason being the fact that the basis functions for splines of degree 2 and 3 that
we are familiar with are not stable. The stability is required in \cite{DP}.
The problem for establishing Jackson estimates for approximation from splines
of degree 2 and higher of maximum smoothness remains open.

\subsection{Bernstein estimate in the nonnested case}\label{subsec:smooth-Bern-est-2}

We come now to one of the main result of this article.
Here we operate in the setting described above in \S\ref{subsec:setting-2}.

\begin{thm}\label{thm:smooth-Bernstein-2}
Let $0< p<\infty$, $k\ge 1$, $0<s/2<k-1+1/p$, and $1/\tau=s/2+1/p$.
Then for any  $S_1, S_2\in \cS(n, k)$, $n\ge n_0$, we have
\begin{align}
|S_1|_{B^{s,k}_{\tau}} &\le |S_2|_{B^{s, k}_{\tau}} + cn^{s/2}\|S_1-S_2\|_{L^p},
\quad\hbox{if} \;\; \tau \ge 1, \quad\hbox{and}\label{smooth-Bernstein-2}\\
|S_1|_{B^{s, k}_{\tau}}^\tau &\le |S_2|_{B^{s, 2}_{\tau}}^\tau + cn^{\tau s/2}\|S_1-S_2\|_{L^p}^\tau,
\quad\hbox{if} \;\; \tau < 1. \label{smooth-Bernstein-3}
\end{align}
where the constant $c>0$ depends only on $s, p, k$, and the structural constants of the setting.
\end{thm}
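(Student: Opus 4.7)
The plan is to follow the architecture of the proof of Theorem~\ref{thm:Bernstein} essentially verbatim, with first differences $\Delta_h$ replaced by $k$-th differences $\Delta_h^k$, and with two new algebraic ingredients replacing ``$S_j$ is constant on the good triangle.'' First I would reuse the geometric machinery of \S\ref{subsec:proof-Bernstein} wholesale: form the common refinement $\cU$ by intersecting the rings of $\cR_1$ and $\cR_2$, attach good triangles $\tri_E$ to every edge via Definition~\ref{def-rings}(c), and carve each $U\in\cU$ into trapezoids $\cT_U$ and residual pieces $\cA_U$ satisfying Lemma~\ref{lem:two-tri}. These constructions, Property~\ref{property-tr}, and the counting bounds (\ref{num-intesect})--(\ref{number-sets}) depend only on the ring geometry and so transfer without change.

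The two new ingredients are: (a)~\emph{polynomial annihilation}, $\Delta_h^k P\equiv 0$ for $P\in\Pi_k$, which turns ``$[x,x+kh]\subset\tri_{E_j}^\circ$'' into ``$\Delta_h^k S_j(x)=0$''; and (b)~\emph{max-smooth jump structure}, the observation that $S_j\in W^{k-2}(\Omega)$ forces the polynomial jump of $S_j$ across every edge $E$ of its partition to be of the form $c_E L_E^{k-1}$, where $L_E$ is an affine function vanishing on $E$ and $c_E\in\R$ is a scalar jump coefficient. Restricting $S_j$ to any line therefore yields a univariate piecewise polynomial of degree $<k$ that is $C^{k-2}$; its $k$-th finite difference is a finite sum of B-spline--type functionals of truncated powers, each supported in an interval of length $\le k|h|$ around an edge crossing and of magnitude controlled by $|c_E||h|^{k-1}$ times a bounded factor.

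With (a) and (b) in place I would rerun the case analysis of \S\ref{subsec:proof-Bernstein}. Case~1(a) collapses via~(a). Cases 1(b)--1(d) and Case~2 use the trivial bound
\begin{equation*}
|\Delta_h^k S_1(x)|\le |\Delta_h^k S_2(x)|+\sum_{\nu=0}^{k}\binom{k}{\nu}|S_1(x+\nu h)-S_2(x+\nu h)|,
\end{equation*}
and the $L^\tau$-contributions on the small balls $B_v$, the interpolation strip $\cITt$, and the translate $\cITt+[0,h]$ are estimated exactly as in the constant case, with each extra summand in the $k$-th difference contributing only a harmless multiplicative factor. Case~1(e), the shift identity, becomes the main new point: writing $f_j:=S_j-P_j$, where $P_j$ is the polynomial of $S_j$ on $\tri_{E_j}$, we have $\Delta_h^k S_j=\Delta_h^k f_j$ and $f_j\equiv 0$ on $\tri_{E_j}$. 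Choosing the horizontal shift $x\mapsto x+be_1$ exactly as in Theorem~\ref{thm:Bernstein} to create a bijection between pairs $(x+\nu h,x+be_1+\nu h)$ in which $\tri_{E_1}$- and $\tri_{E_2}$-memberships are swapped, one obtains
\begin{equation*}
|\Delta_h^k S_1(x)|\le |\Delta_h^k S_2(x+be_1)|+c\sum_{\nu=0}^{k}\bigl(|S_1(y_\nu)-S_2(y_\nu)|+|S_1(z_\nu)-S_2(z_\nu)|\bigr),
\end{equation*}
where $y_\nu,z_\nu$ lie in good triangles close to $L_1$ or $L_2$ and the polynomial differences $P_1-P_2$ that appear are $\Delta_h^k$-annihilated.

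The integration $|S_1|_{B^{s,k}_\tau}^\tau=\int_0^\infty t^{-\tau s-1}\omega_k(S_1,t)_\tau^\tau\,dt$ and the summation over $\cA\cup\cT$ then proceed exactly as in \S\ref{subsec:proof-Bernstein}. The role played by $s/2<1/p$ in the constant case is now played by $s/2<k-1+1/p$, which provides precisely the room needed for the $t$-integrals involving the additional factor $t^{k-1}$ arising from the truncated-power bounds of~(b). A final H\"older inequality together with~(\ref{number-sets}) then yields the required $cn^{\tau s/2}\|S_1-S_2\|_{L^p}^\tau$. The main obstacle, as I see it, is making the $k$-point shift in Case~1(e) uniformly precise: in the $k=1$ case only two evaluations needed to be matched, whereas now one must verify, for every direction $\theta$ and every trapezoid shape, that all $k+1$ evaluations simultaneously cross between the $\tri_{E_1}$- and $\tri_{E_2}$-sides in the coordinated way dictated by~(b), with all residual contributions absorbable into $\|S_1-S_2\|_{L^p}$ via the same ring-by-ring bookkeeping that drives the rest of the proof.
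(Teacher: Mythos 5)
There are two genuine gaps here, and the paper devotes the bulk of \S\ref{sec:linear-splines} precisely to closing them.

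First, you propose to reuse the decomposition $\cU$, $\cA$, $\cT$ built directly from intersecting the original families $\cR_1$ and $\cR_2$, ``wholesale.'' For $k\ge 2$ this is insufficient: in Case 1(e) the quantity you must control is no longer a pointwise difference $|S_1(y)-S_2(y)|$ at a single point but a segment supremum $\|D_\nn^{k-1}(S_1-S_2)\|_{L^\infty([b_1,x+kh])}$, and that segment may cross uncontrollably many ring boundaries when many small rings from $\cR_j$ abut one large ring. The paper flags exactly this as ``a major obstacle'' (text following Corollary~\ref{cor:inverese-2}) and resolves it by an additional subdivision of every ring into the nested hierarchy of Lemma~\ref{lem:hierarchies-of-rings}, yielding the refinements $\FF_1,\FF_2$ with the uniform neighbor bound $N_1$ of Lemma~\ref{lem:summary}, and then by a further subdivision of the trapezoids (the \emph{Properties of New Trapezoids} step) so that each leg $L_j$ of $T$ is a side of a \emph{single} good triangle $\tri_{L_j}$ for a \emph{single} ring of $\FF_j$ on the far side. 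Only then do the shifted segments land in boundedly many sets $A\in\cA$, so the later H\"older/counting step works. Your plan silently assumes this control; without it the counting bound (\ref{number-sets}) does not give what you need.

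Second, the mechanism you sketch for Case 1(e) --- subtract $P_j$, shift horizontally by $be_1$, and claim a coordinated swap of $\tri_{E_1}$/$\tri_{E_2}$ membership for all $k+1$ evaluation points --- is not established, and you yourself describe making it precise as ``the main obstacle.'' The paper's route is different and more robust: via (\ref{rep-Delta-k}) it writes $\Delta_h^kS_j$ as a B-spline integral of $D_\nn^{k-1}S_j$ (this is where $S_j\in W^{k-2}(\Omega)$ is actually used), shifts by the $h$-parallel vector $b=b_2-b_1$ rather than horizontally, and exploits that $D_\nn^{k-1}S_1$ is constant on $[b_1,x+b]$ while $D_\nn^{k-1}S_2$ is constant on $[x+kh,b_2]$ to arrive at
$|\Delta_h^kS_1(x)|\le|\Delta_h^kS_2(x+b)|+ct^{k-1}\|D_\nn^{k-1}(S_1-S_2)\|_{L^\infty([b_1,x+kh]\cup[x+b,b_2])}$,
with $([b_1,x+kh]\cup[x+b,b_2])\cap T^\circ=\emptyset$. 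Your observation about the jump form $c_E L_E^{k-1}$ is a correct reformulation of maximum smoothness, but you never convert it into a concrete inequality; the displayed bound in your Case 1(e) is asserted, not derived. You do correctly locate where $s/2<k-1+1/p$ enters (convergence of $\int_0^{d(A)} t^{-\tau s+\tau(k-1)}\,dt$), but the $t^{k-1}$ factor that makes this relevant comes from the B-spline representation, which your proposal does not produce.
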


An immediate consequence of this theorem is the inverse estimate given in

\begin{cor}\label{cor:inverese-2}
Let $0< p<\infty$, $k\ge 1$, $0<s/2<k-1+1/p$, and $1/\tau=s/2+1/p$. Set $\lambda :=\min\{\tau, 1\}$.
Then for any $f\in L^p(\Omega)$ we have
\begin{equation}\label{inverse-est-2}
K(f, n^{-s/2}) \le cn^{-s/2}
\Big(\sum_{\ell=n_0}^n \frac{1}{\ell}\big[\ell^{s/2} S_\ell^k(f)_p\big]^\lambda + \|f\|_p^\lambda\Big)^{1/\lambda},
\quad n\ge n_0.
\end{equation}
Here $K(f, t)=K(f, t; L^p, B^s_\tau)$ is the $K$-functional defined just as in $(\ref{def-K-functional})$
and $c>0$ is a constant depending only on $s, p, k$, and the structural constants of the setting.
\end{cor}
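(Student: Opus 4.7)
The plan is to mirror verbatim the argument used to deduce Theorem~\ref{thm:inverese-1} from Theorem~\ref{thm:Bernstein}, now using Theorem~\ref{thm:smooth-Bernstein-2} in place of Theorem~\ref{thm:Bernstein}. Fix $f\in L^p(\Omega)$ and, for each $m\ge m_0:=\lceil\log_2 n_0\rceil$, choose a near-best approximant $S_{2^m}\in \cS(2^m,k)$ with $\|f-S_{2^m}\|_p\le 2 S_{2^m}^k(f)_p$ (or an exact best approximant if one exists). For any such $m$ the triangle inequality for the $K$-functional gives
\begin{equation*}
K(f,2^{-ms/2})\le \|f-S_{2^m}\|_p + 2^{-ms/2}\,|S_{2^m}|_{B^{s,k}_\tau},
\end{equation*}
so the task reduces to bounding $|S_{2^m}|_{B^{s,k}_\tau}^\lambda$ in terms of the dyadic errors.

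The second step is a telescoping argument based on the smooth Bernstein estimate. For $\nu\ge m_0+1$, since $S_{2^\nu},S_{2^{\nu-1}}\in\cS(2^\nu,k)$ (by taking a common refinement of the two ring families, which enlarges the ring count at most by a factor absorbed into the constant), Theorem~\ref{thm:smooth-Bernstein-2} yields
\begin{equation*}
|S_{2^\nu}|_{B^{s,k}_\tau}^\lambda
\le |S_{2^{\nu-1}}|_{B^{s,k}_\tau}^\lambda + c\,2^{\lambda\nu s/2}\,\|S_{2^\nu}-S_{2^{\nu-1}}\|_p^\lambda,
\end{equation*}
and then $\|S_{2^\nu}-S_{2^{\nu-1}}\|_p\le \|f-S_{2^\nu}\|_p+\|f-S_{2^{\nu-1}}\|_p$ together with monotonicity of $S_\ell^k(f)_p$ upgrades this to
\begin{equation*}
|S_{2^\nu}|_{B^{s,k}_\tau}^\lambda
\le |S_{2^{\nu-1}}|_{B^{s,k}_\tau}^\lambda + c'\,2^{\lambda\nu s/2}\,S_{2^{\nu-1}}^k(f)_p^\lambda.
\end{equation*}
For the base case, applying Theorem~\ref{thm:smooth-Bernstein-2} with $S_2\equiv 0$ (which lies trivially in $\cS(n,k)$) gives the one-sided Bernstein bound $|S_{2^{m_0}}|_{B^{s,k}_\tau}^\lambda\le c\,\|S_{2^{m_0}}\|_p^\lambda\le c\,S_{2^{m_0}}^k(f)_p^\lambda+c\,\|f\|_p^\lambda$. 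Summing the telescope from $\nu=m_0+1$ to $\nu=m$ and inserting into the displayed inequality for $K(f,2^{-ms/2})$ yields, after absorbing $\|f-S_{2^m}\|_p$ into the sum via $\|f-S_{2^m}\|_p\le 2^{-ms/2}\cdot 2^{ms/2}S_{2^m}^k(f)_p$,
\begin{equation*}
K(f,2^{-ms/2})\le c\,2^{-ms/2}\Big(\sum_{\nu=m_0}^{m} 2^{\lambda \nu s/2}\,S_{2^\nu}^k(f)_p^\lambda+\|f\|_p^\lambda\Big)^{1/\lambda}.
\end{equation*}

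The third and final step is the standard passage from a dyadic to a full sum. Given $n\ge n_0$, choose $m$ with $2^{m-1}\le n<2^m$; monotonicity of $K(f,\cdot)$ and of $\ell\mapsto S_\ell^k(f)_p$ converts the dyadic geometric sum $\sum_\nu 2^{\lambda \nu s/2} S_{2^\nu}^k(f)_p^\lambda$ into $\sum_{\ell=n_0}^{n}\ell^{-1}[\ell^{s/2}S_\ell^k(f)_p]^\lambda$ (using that within a dyadic block $[2^\nu,2^{\nu+1})$ both factors are controlled up to constants by their endpoint values and there are $\sim 2^\nu$ integers, which cancels with $\ell^{-1}$), producing exactly \eqref{inverse-est-2}. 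The case $p=2$, $s=1$ with $B^s_\tau$ replaced by $BV$ is identical, using the $BV$ version of the Bernstein inequality; the case $\tau\ge 1$ uses \eqref{smooth-Bernstein-2} with $\lambda=1$ in place of the $\tau$-th power inequality. The only point requiring care is the base-case Bernstein bound for a single spline, which as noted above is a special case of Theorem~\ref{thm:smooth-Bernstein-2}; no new inequality is needed.
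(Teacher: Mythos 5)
Your proposal is correct and follows exactly the route the paper intends: the paper states that the proof of the corollary is a repetition of the proof of Theorem~\ref{thm:inverese-1}, and your write-up mirrors that proof step for step (telescoping via the Bernstein inequality, the base-case bound, the dyadic-to-full-sum passage). Two remarks are worth making.

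First, your treatment of the base case is a genuine and useful clarification, not merely a stylistic choice. A literal repetition of the proof of Theorem~\ref{thm:inverese-1} would require a one-spline Bernstein bound $|S|_{B^{s,k}_\tau}\le cn^{s/2}\|S\|_p$ for $S\in\cS(n,k)$ on the full range $0<s/2<k-1+1/p$, but the only one-spline estimate explicitly stated in the paper (Theorem~\ref{thm:smooth-Bernstein-1}, with maximal defect $r=k-2$) requires $s/2<k-2+1/p$, which does not cover the range assumed in the corollary once $k\ge2$. Specializing Theorem~\ref{thm:smooth-Bernstein-2} to $S_2\equiv0$, as you do, recovers the one-spline estimate on the full range $s/2<k-1+1/p$ because that proof exploits the $(k-1)$st directional derivative of a $W^{k-2}$ spline (which is piecewise, not globally, continuous), whereas Theorem~\ref{thm:smooth-Bernstein-1} stops at the $r$th derivative.

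Second, a small but worth-correcting slip: the parenthetical ``by taking a common refinement of the two ring families'' is both unnecessary and, as reasoning, incorrect. A common refinement of two families with $O(2^\nu)$ rings each can have $O(4^\nu)$ pieces, and those pieces need not be rings in the sense of Definition~\ref{def-rings}; that is precisely the difficulty Theorem~\ref{thm:smooth-Bernstein-2} is designed to avoid. The correct and much simpler justification is that $\cS(2^{\nu-1},k)\subset\cS(2^\nu,k)$ (a ring partition into fewer rings can be refined by subdividing rings without altering the spline), so $S_{2^\nu}$ and $S_{2^{\nu-1}}$ both lie in $\cS(2^\nu,k)$ and the theorem applies directly; $S_1$ and $S_2$ need not share a ring family. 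Finally, the closing sentence about the $BV$ case is extraneous here: unlike Theorem~\ref{thm:inverese-1}, the statement of Corollary~\ref{cor:inverese-2} contains no $BV$ assertion.
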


The proof of this corollary is just a repetition of the proof of Theorem~\ref{thm:inverese-1}.
We omit it.

In turn, estimates (\ref{smooth-direct-est}) and (\ref{inverse-est-2}) imply a characterization
of the approximation spaces associated with nonlinear nonnested piecewise linear approximation,
see (\ref{character-app}).

The proof of Theorem~\ref{thm:smooth-Bernstein-2} relies on the idea we used in the proof of
Theorem~\ref{thm:Bernstein}. However, there is an important complication to overcome.
The fact that many rings with relatively small supports can be located next to a large ring
is a major obstacle in implementing this idea in the case of smooth splines.
An additional construction is needed.
To make the proof more accessible, we shall proceed in two steps.
We first develop the needed additional construction and implement it in \S\ref{subsec:smooth-Bern-est}
to prove the respective Bernstein estimate in the nested case and then
we present the proof of Theorem~\ref{thm:smooth-Bernstein-2} in \S\ref{subsec:proof-smooth-Bern-est-2}.

Before we proceed with the proofs of the Bernstein estimates we show in the next example
that the assumption that in our setting the splines are of maximum smoothness is essential.

\begin{ex}\label{example-maxsmooth}
We now show that estimates (\ref{smooth-Bernstein-2})-(\ref{smooth-Bernstein-3}) 
fail without the assumption that $S_1,S_2\in W^{k-2}(\Omega)$
(i.e., both splines have maximum smoothness).
We shall only consider the case when $k=2$ and $\tau \le 1$. 
Let $\Omega=[-1,1]\times[0,1]$ and $0<\varepsilon<1/4$.
Set
$$
S_1(x):=x_1\ONE_{[0,1]^2}(x),\quad S_2(x):=x_1\ONE_{[\varepsilon,1]\times[0,1]}(x), \quad x=(x_1, x_2).
$$
Clearly, $S_1$ is continuous on $\Omega$, while $S_2$ is discontinuous along $x_1=\varepsilon$.
A straightforward calculation shows that
\begin{equation}\label{smallt}
\omega_2(S_1,t)_\tau^\tau=\frac{2t^{\tau+1}}{\tau+1}
\quad\hbox{and}\quad\omega_2(S_2,t)_\tau^\tau=\int_{-t}^t|w+\varepsilon|^\tau dw
\quad\hbox{for}\quad 0\le t\le 1/4.
\end{equation}
Further,
\begin{equation}\label{int-tt}
\int_{-t}^t|w+\varepsilon|^\tau dw=\frac{1}{\tau+1}\big[(t+\varepsilon)^{\tau+1}
+{\rm sign}(t-\varepsilon)|t-\varepsilon|^{\tau+1}\big].
\end{equation}
On the other hand, obviously
$\omega_2(S_1-S_2, t)_\tau^\tau \le 4\|S_1-S_2\|_{L^\tau}^\tau \le 4\varepsilon^{\tau+1}$
yielding
\begin{equation}\label{larget}
\omega_2(S_2,t)_\tau^\tau\ge\omega_2(S_1,t)_\tau^\tau-4\varepsilon^{\tau+1}.
\end{equation}
We shall use this estimate for $t>1/4$.
From (\ref{def-smooth-Besov}) and (\ref{smallt})-(\ref{larget}) we obtain
\begin{align*}
|S_2|_{B_\tau^{s,2}}^\tau &-|S_1|_{B_\tau^{s,2}}^\tau
\ge\frac{1}{\tau+1}\Big[\int_0^\varepsilon t^{-s\tau-1}[(t+\varepsilon)^{\tau+1}-(\varepsilon-t)^{\tau+1}-2t^{\tau+1}]dt\\
&+\int_\varepsilon^{1/4} t^{-s\tau-1}[(\varepsilon+t)^{\tau+1}+(t-\varepsilon)^{\tau+1}-2t^{\tau+1}]dt\Big]
-4\varepsilon^{\tau+1}\int_{1/4}^\infty t^{-s\tau-1}dt\\
&=: I_1+I_2-(4^{s\tau+1}/s\tau)\varepsilon^{\tau+1}.
\end{align*}
Substituting $t=\varepsilon u$ in $I_1$ and $I_2$, we get
$$
I_1+I_2=\frac{\varepsilon^{\tau-s\tau+1}}{\tau+1}
\Big[\int_0^1u^{-s\tau-1}\phi_1(u)du
+\int_1^{1/4\varepsilon}u^{-s\tau-1}\phi_2(u)du\Big],
$$
where
$$
\phi_1(u)=(1+u)^{\tau+1}-(1-u)^{\tau+1}-2u^{\tau+1}
$$
and
$$
\phi_2(u)=(1+u)^{\tau+1}+(u-1)^{\tau+1}-2u^{\tau+1}.
$$
We clearly have $\phi_1\ge0$ on $[0,1]$ and $\phi_2\ge 0$ on $[1,\infty)$.
Therefore,
$$
|S_2|_{B_\tau^{s,2}}^\tau-|S_1|_{B_\tau^{s,2}}^\tau\ge c_1\varepsilon^{\tau-s\tau+1}-c_0\varepsilon^{\tau+1}
=\varepsilon^{\tau-s\tau+1}(c_1-c_0\varepsilon^{s\tau}),
$$
where
$$
c_1:=\frac{1}{\tau+1}\int_0^1t^{-s\tau-1}\phi_1(u)du>0
\quad\hbox{and}\quad
c_0:= 4^{s\tau+1}/s\tau.
$$
%
By taking $\varepsilon$ sufficiently small, we get
\begin{equation}
\label{norm1}
|S_2|_{B_\tau^{s,2}}^\tau-|S_1|_{B_\tau^{s,2}}^\tau\ge (c_1/2)\varepsilon^{\tau-s\tau+1}.
\end{equation}
Evidently,
\begin{equation}
\label{norm2}
\|S_2-S_1\|_{L^p}\le \varepsilon^{1+1/p}.
\end{equation}
By (\ref{norm1}) and (\ref{norm2}),
$$
\frac{|S_2|_{B_\tau^{s,2}}^\tau-|S_1|_{B_\tau^{s,2}}^\tau}{\|S_2-S_1\|_{L^p}^\tau}
\ge (c_1/2)\varepsilon^{1-s\tau-\tau/p}=(c_1/2)\varepsilon^{-s\tau/2}.
$$
Since $\varepsilon^{-s\tau/2}\rightarrow\infty$ as $\varepsilon\rightarrow0$,
estimate (\ref{smooth-Bernstein-3}) 
cannot hold.
\end{ex}

\subsection{Bernstein estimate in the nested case}\label{subsec:smooth-Bern-est}

We next prove a Bernstein estimate which yields an inverse estimate in the case of
nested spline approximation.

\begin{thm}\label{thm:smooth-Bernstein-1}
Let $0< p<\infty$, $k\ge 2$, $0\le r\le k-2$, $0<s/2<r+1/p$, and $1/\tau=s/2+1/p$.
Then for any  $S\in \cS(n, k, r)$, $n\ge n_0$, we have
\begin{equation}\label{smooth-Bern-1}
|S|_{B^{s, k}_{\tau}} \le cn^{s/2}\|S\|_{L^p},
\end{equation}
where the constant $c>0$ depends only on $s,p, k, r$, and the structural constant of our setting.
\end{thm}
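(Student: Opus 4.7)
The plan is to estimate $\omega_k(S,t)_\tau^\tau$ piece by piece over each ring and then integrate against $t^{-s\tau-1}\,dt$. First observe that if $[x,x+kh]$ lies inside a single ring $R_j$, then $\Delta_h^k S(x) = \Delta_h^k P_j(x) = 0$, since $P_j\in\Pi_k$ has degree $<k$. Hence the only $x$ contributing to $\|\Delta_h^k S\|_{L^\tau}^\tau$ lie in the boundary strip $R_j^{t,h} := \{x\in R_j : [x,x+kh]\not\subset R_j\}$, and condition (c) in Definition~\ref{def-rings} together with (\ref{measure-ring}) gives $|R_j^{t,h}|\le c\,t\,d(R_j)$.

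The central algebraic step is to exploit the smoothness $S\in W^{k-2}(\Omega)$ (really, $S\in W^r$ suffices for $\cS(n,k,r)$). If $[x,x+kh]$ crosses an edge $E$ shared by $R_j$ and a neighbouring $R_{j'}$, I write $\Delta_h^k S(x) = \Delta_h^k(S-P_{j'})(x)$; the function $S-P_{j'}$ vanishes on $R_{j'}$ and equals $P_j-P_{j'}$ on $R_j$. Since $\partial^\alpha(P_j-P_{j'})\equiv 0$ on $E$ for all $|\alpha|\le r$, the polynomial $P_j - P_{j'}$ factors as $L_E^{r+1}\,Q$, where $L_E$ is the normalized linear form with $|L_E(y)| = \dist(y,\ell_E)$ and $Q\in \Pi_{k-r-1}$. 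Applying Markov's inequality and Nikolski's inequality on the regular set $R_j$, together with the fact that neighbouring rings have comparable diameters (a consequence of Definition~\ref{def-rings}), I get
\[
\|Q\|_{L^\infty(R_j)} \le c\,d(R_j)^{-(r+1)}\|P_j-P_{j'}\|_{L^\infty(R_j)} \le c\,d(R_j)^{-(r+1)-2/p}\|S\|_{L^p(R_j^\sharp)},
\]
where $R_j^\sharp$ denotes the union of $R_j$ with its ring-neighbours. Since every $y\in[x,x+kh]$ satisfies $\dist(y,\ell_E)\le kt$, this yields
\[
|\Delta_h^k S(x)|\le c\,\bigl(t/d(R_j)\bigr)^{r+1}\,d(R_j)^{-2/p}\,\|S\|_{L^p(R_j^\sharp)}.
\]

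Integrating $|\Delta_h^k S|^\tau$ over $R_j^{t,h}$ gives, for rings with $d(R_j)\ge t$, the bound $c\,t^{(r+1)\tau+1}d(R_j)^{1-(r+1)\tau-2\tau/p}\|S\|_{L^p(R_j^\sharp)}^\tau$; for rings with $d(R_j)<t$, the trivial $L^\infty$ bound combined with $|R_j|\sim d(R_j)^2$ yields $c\,d(R_j)^{s\tau}\|S\|_{L^p(R_j^\sharp)}^\tau$. Multiplying by $t^{-s\tau-1}$ and integrating in $t$, the exponents collapse using $1/\tau=s/2+1/p$ (the hypothesis $s/2<r+1/p$ provides absolute convergence of all integrals involved) and both regimes per ring produce $c\,\|S\|_{L^p(R_j^\sharp)}^\tau$. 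Summing over $j$ and applying H\"{o}lder's inequality with exponent $p/\tau\ge 1$ one arrives at
\[
|S|_{B^{s,k}_\tau}^\tau \le c\sum_j \|S\|_{L^p(R_j^\sharp)}^\tau \le c\,n^{1-\tau/p}\|S\|_{L^p(\Omega)}^\tau = c\,n^{\tau s/2}\|S\|_{L^p(\Omega)}^\tau,
\]
which is the desired Bernstein inequality for $\tau\le 1$; the case $\tau\ge 1$ proceeds identically, using Minkowski's inequality in place of H\"{o}lder's.

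The main obstacle will be the Markov/polynomial-growth step: the factorization bound $\|Q\|_{L^\infty(R_j)}\le c\,d(R_j)^{-(r+1)}\|P_j-P_{j'}\|_{L^\infty(R_j)}$ and the comparison $\|P_{j'}\|_{L^\infty(R_j)}\le c\,\|P_{j'}\|_{L^\infty(R_{j'})}$ must be proved with constants that depend only on the structural parameters $N_0$, $c_0$, $\beta$. This requires showing that the line $\ell_E$ penetrates into $R_j$ on a set of scale $\sim d(R_j)$ where $|L_E|\sim d(R_j)$ (so Markov's inequality can be applied) and that adjacent rings cannot differ wildly in scale; both facts follow from the bounded-eccentricity and no-narrow-subregions conditions, but the bookkeeping of constants through the factorization $P_j-P_{j'}=L_E^{r+1}Q$ is delicate.
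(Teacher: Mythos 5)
Your outline correctly identifies the structural skeleton (contributions come from boundary strips, estimate $\omega_k(S,t)_\tau^\tau$ there, integrate against $t^{-s\tau-1}dt$, sum, apply H\"older), and the factorization $P_j - P_{j'} = L_E^{r+1}Q$ across a shared edge is a legitimate and in fact slightly sharper way to exploit the $W^r$ smoothness than the paper's bound $|\Delta_h^k S(x)|\le ct^r\|D_\nn^r S\|_{L^\infty([x,x+kh])}$ (it produces $t^{r+1}$ in place of $t^r$, so the near-$0$ convergence of the $t$-integral only needs $s/2<r+1+1/p$). But there is a genuine gap, and it is precisely the sentence you wave through: that ``neighbouring rings have comparable diameters (a consequence of Definition~\ref{def-rings})'' and hence ``adjacent rings cannot differ wildly in scale.'' This is not true. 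Definition~\ref{def-rings} constrains each ring \emph{individually} --- bounded eccentricity and no narrow elongated subregions --- and imposes nothing on how rings in a partition sit next to one another. Nothing prevents a single large ring from having arbitrarily many tiny neighbours packed along one of its edges; all rings in such a configuration still satisfy Definition~\ref{def-rings}.

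This matters in two places. First, the comparison $\|P_{j'}\|_{L^\infty(R_j)}\le c\|P_{j'}\|_{L^\infty(R_{j'})}$ fails when $d(R_{j'})\ll d(R_j)$: a polynomial extrapolated far beyond its sampling region is uncontrolled, so $\|S\|_{L^p(R_j^\sharp)}$ does not dominate $\|P_{j'}\|_{L^\infty(R_j)}$. Second, and more fundamentally, the last H\"older step needs $\sum_j \|S\|_{L^p(R_j^\sharp)}^p \le c\,\|S\|_{L^p(\Omega)}^p$, i.e. bounded overlap of the enlarged rings $R_j^\sharp$; the overlap count is exactly the number of neighbours, and without controlling it the step only gives $n\,\|S\|_{L^p}^\tau$, not $n^{\tau s/2}\|S\|_{L^p}^\tau$ (recall $\tau s/2 = 1-\tau/p < 1$). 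The paper expends essentially all the real effort in this proof on exactly this point: Lemma~\ref{lem:hierarchies-of-rings} subdivides each ring into a nested hierarchy of subrings, the sets $\LL^R$, $\tLL^R$, $\ttLL^R$, $\FF^R$ extract a refined partition $\FF$ of size $O(n)$, and Lemma~\ref{lem:summary} establishes the property your argument silently assumes: every ring in $\FF$ has at most $N_1$ neighbours in $\FF$. Only after $S$ is re-expanded over $\FF$ via~(\ref{rep-linear-2}) does the local-estimate-then-integrate machinery close. Your proposal omits this construction entirely and cannot be repaired without something equivalent; as written it proves the inequality with $n$ in place of $n^{s/2}$, which is not the Bernstein inequality.
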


\subsection*{Additional subdivision of \boldmath $\Omega$}
Situations where there are many small rings located next to a large ring create problems.
To be able to deal with such cases we shall additionally subdivide $\Omega$
in two steps.

\smallskip

\noindent
{\em Subdivision of all rings $R\in \cR_n$ into nested hierarchies of rings.}

\begin{lem}\label{lem:hierarchies-of-rings}
There exists a subdivision of every ring $R\in \cR_n$ into a nested multilevel collection of rings
$$
\cK^R = \cup_{m=m_R}^\infty\cK_m^R
$$
with the following properties, where we use the abbreviated notation $\cK_m:=\cK_m^R$:

$(a)$ Every level $\cK_m$ defines a partition of $R$ into rings with
disjoint interiors such that
$R = \cup_{K\in \cK_m} K$.

$(b)$ The levels $\{\cK_m\}_{m\ge m_R}$ are nested, i.e. $\cK_{m+1}$ is a refinement of $\cK_m$,
and each $K\in \cK_m$ has at least 4 and at most $\MM$ children in $\cK_{m+1}$,
where $\MM \ge 4$ is a constant. 

$(c)$ $|R| \le c_1|K|$ for all $K\in \cK_{m_R}$.

$(d)$ We have
\begin{equation}\label{areas}
c_2^{-1}4^{-m} \le |K| \le c_2 4^{-m}, \quad \forall K\in \cK_m, \quad \forall m\ge m_R.
\end{equation}
%
As a consequence we have
$c_3^{-1}4^{-m_R} \le |R| \le c_3 4^{-m_R}$
and
\begin{equation}\label{perimeter}
c_4^{-1} 2^{-m} \le d(K) \le c_4 2^{-m}, \quad \forall K\in \cK_m, \quad \forall m\ge m_R.
\end{equation}

$(e)$ All rings $K\in \cK^R$ are rings without a hole,
except for finitely many of them in the case when $R=Q_1\setminus Q_2$
and $Q_2$ is small relative to $Q_1$.
Then the rings with a hole form a chain $R\supset K_1 \supset K_2 \supset\cdots\supset K_\ell \supset Q_2$.
All sets $K\in \cK^R$ are rings in the sense of Definition~\ref{def-rings}
with structural constants (parameters)
$N_0^*$, $c_0^\star$, and $\beta^\star$.
These and the constants $\MM$ and $c_1, c_2, c_3, c_4>0$ from above depend only on
the initial structural constants $N_0$, $c_0$, and $\beta$.
\end{lem}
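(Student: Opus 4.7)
The plan is to build $\cK^R$ in three stages: a base-level construction producing $O(1)$ cells of area $\sim |R|$, a dyadic refinement acting separately on each base cell, and, when $R = Q_1\setminus Q_2$ has a small hole, an auxiliary chain of homothetically shrinking rings that carries the hole down to a scale comparable with $|Q_2|$. I would fix $m_R := \lceil \log_4(1/|R|)\rceil$ so the area bound $(\ref{areas})$ is forced on the smallest level, and derive $(\ref{perimeter})$ at the end from $d(K)^2 \sim |K|$, which holds uniformly for rings by the remark after Definition~\ref{def-rings}.

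For the base level, if $R = Q_1$ has no hole, let $x_0$ be the center of an inscribed ball $B(x_0,r_2)\subset Q_1$ and form the fan triangulation of $Q_1$ from $x_0$. Bounded eccentricity of $Q_1$ together with the good-triangle condition on $\partial Q_1$ gives every fan triangle a minimum angle bounded below by some $\alpha_0 = \alpha_0(N_0,c_0,\beta)>0$ and bounded eccentricity; these triangles form $\cK_{m_R}^R$. If $R = Q_1\setminus Q_2$ with $|Q_2|/|Q_1|$ bounded below, then $R$ is a ``thick annulus'' and a fixed finite pattern of radial cuts from $\partial Q_2$ to $\partial Q_1$ breaks it into $O(1)$ convex pieces (rings without a hole), each of which is then triangulated as above. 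The dyadic refinement is then the standard midpoint subdivision of each triangle into four similar children, which automatically preserves minimum angle and bounded eccentricity, halves diameters, quarters areas, and gives exactly four children per parent so that $(a)$, $(b)$, $(d)$ and the no-hole part of $(e)$ hold throughout.

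To handle a ring with a very small hole, pick a point $x^\star$ lying in $Q_2$ (or, failing that, in the inscribed ball of $Q_1$) and set $Q_1^{(j)} := x^\star + 2^{-j}(Q_1 - x^\star)$. Let $\ell$ be the smallest $j$ for which $d(Q_1^{(j)}) \sim d(Q_2)$; then the chain of rings-with-hole is $K_j := Q_1^{(j)}\setminus Q_2$ for $j=0,\dots,\ell$ (with $K_0 = R$), and $\ell \lesssim \log_2(d(Q_1)/d(Q_2))$ is finite. Between consecutive elements of the chain the shell $Q_1^{(j)}\setminus Q_1^{(j+1)}$ is a thick annulus without hole which is triangulated into $O(1)$ good triangles by the base-level construction and then refined dyadically; the innermost shell $Q_1^{(\ell)}\setminus Q_2$ is the remaining thick annulus handled directly. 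Embedding these chain steps into the multilevel structure of $\cK^R$ is done by letting each $K_j$ be a child of $K_{j-1}$ alongside the $O(1)$ no-hole rings that tile $Q_1^{(j-1)}\setminus Q_1^{(j)}$; this keeps the number of children uniformly bounded by some $M$, yielding $(b)$ and $(e)$ simultaneously.

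The main obstacle is maintaining the three structural constants $N_0^\star$, $c_0^\star$, $\beta^\star$ uniformly across all levels and all rings. The dyadic refinement of triangles is self-similar and causes no difficulty, so the burden is concentrated in the base step and in the chain step: one must ensure the fan triangulation of $Q_1$ (or of an annular shell) never produces triangles of vanishing angle even when consecutive edges of $\partial Q_1$ are very short or nearly collinear, and one must choose the homothety center $x^\star$ so that every $Q_1^{(j)}$ ($0\le j\le \ell$) sits well inside $Q_1^{(j-1)}$ while still containing $Q_2$. The first issue is resolved by inserting, wherever necessary, an additional vertex on a short edge to split it into pieces of comparable length before fanning; the second by translating $Q_1$ so that $x^\star$ can be taken inside the inscribed ball of $Q_1$ and inside (or close to) $Q_2$, which is possible because of bounded eccentricity of $Q_1$ and $Q_2$ together with the definition of $\ell$. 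Once these geometric choices are fixed, properties $(a)$--$(e)$ follow by a straightforward induction on the refinement level, with all constants depending only on $N_0$, $c_0$, and $\beta$.
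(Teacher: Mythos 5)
Your plan — a fan-type base subdivision from a central point, then dyadic midpoint refinement, with a homothetically shrinking chain of rings $K_j=Q_1^{(j)}\setminus Q_2$ to carry a small hole down to its own scale — is the same strategy the paper uses, and your chain construction is a concrete realization of what the paper only gestures at (``necessary adjustments are made around $Q_2$''); overall your write-up is substantially more detailed than the paper's own one-paragraph sketch. The one step that does not work as stated is your treatment of short edges of $\partial Q_1$: you propose to ``insert an additional vertex on a short edge to split it into pieces of comparable length before fanning,'' but splitting a short base makes the apex angle at $x_0$ even smaller, so this goes the wrong way. Condition (c) of Definition~\ref{def-rings} gives no lower bound on edge length, so a full fan triangulation to every vertex of $\partial Q_1$ can indeed produce degenerate triangles. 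The paper's fix is the opposite of yours: fan to only a bounded number (say six) of roughly equally spaced boundary points, and accept that the base-level cells are polygonal ``pie slices'' rather than triangles; since $\partial Q_1$ has at most $N_0$ edges, each slice is still a regular ring with $O(N_0)$ edges and bounded eccentricity, and dyadic refinement of such a wedge by midpoint insertion works just as well as for triangles. With that one change your argument is sound and tracks the paper's proof.
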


\begin{proof}
Observe first that if we are in a setting as the one described in Scenario 1 from \S\ref{subsec:setting-2},
then the needed subdivision is given by the hierarchy of triangulations described there.

In the general case,
let $R=Q_1\setminus Q_2$ be a ring in the sense of Definition~\ref{def-rings},
and assume that $Q_2 \ne \emptyset$.
We subdivide the polygonal convex set $Q_1$ into subrings by
connecting the center of eccentricity of $Q_1$ with, say, 6 points
from the boundary $\partial R$ of $R$, preferably end points of segments on the boundary,
so that the minimum angle condition is obeyed.
After that we subdivide the resulting rings using mid points and connecting them with segments.
Necessary adjustments are made around $Q_2$ depending on the size and location of $Q_2$.
\end{proof}

\noindent
{\em Subdivision of all rings from $\cR_n$ into subrings with disjoint interiors.}
We first pick up all rings from each $\cK^R$, $R\in\cR_n$, see Lemma~\ref{lem:hierarchies-of-rings},
that are needed to handle situations where many small rings are located next to a large ring.

We shall only need the rings in $\cK^R$ that intersect the boundary $\partial R$ of $R$.
Denote the set all such rings by $\Ga^R$ and set $\Ga^R_m:= \Ga^R \cap \cK^R_m$.
We shall make use of the tree structure in $\Ga^R$. More precisely,
we shall use the parent-child relation in $\Ga^R$ induced by the inclusion relation:
Each ring $K \in \Ga^R_m$ has (contains) at least $1$ and at most $M$ children in $\Ga^R_{m+1}$
and has a single parent in $\Ga^R_{m-1}$ or no parent.

We now construct a set $\LL^R$ of rings from $\Ga^R$ which will help prevent
situations where a ring may have many small neighbors.

Given $R\in\cR_n$, we denote by $\cR_n^R$ the set of all rings $\tR\in \cR_n$, $\tR \ne R$,
such that $\tR \cap R \ne \emptyset$ and $d(\tR) \le d(R)$.
These are all rings from $\cR_n$ that are small relative to $R$ and intersect $R$
(are neighbors of $R$).

It will be convenient to introduce the following somewhat geometric terminology:
{\em We say that a ring $K\in \Ga^R$ can see $\tR\in \cR_n^R$
or that $\tR$ is in the range of $K$ if $d(K) \ge d(\tR)$ and $K\cap \tR\ne \emptyset$.}

We now construct $\LL^R$ by applying the following

{\bf Rule}:
{\em We place $K\in \Ga^R$ in $\LL^R$ if $K$ can see some (at least one) rings from $\cR_n^R$
but neither of the children of $K$ in $\Ga^R$ can see all of them.}

We now extend $\LL^R$ to $\tLL^R$ by adding to $\LL^R$ all same level neighbors
of all $K\in \LL^R$, i.e. if $K\in \LL^R$ and $K\in \Ga^R_m$,
then we add to $\LL^R$ each $K'\in \Ga^R_m$ such that $K'\cap K \ne \emptyset$.

The next step is to construct a subdivision of each $R\in\cR_n$ into rings by using $\tLL^R$.
We fix $R\in \cR_n$ and shall suppress the superscript $R$ for the new sets that
will be introduced next and depend on $R$.

Let $\tilde\Ga \subset \Ga^R$ be the minimal subtree of $\Ga^R$ that contains $\tLL^R$,
i.e.
$\tGa$ is the set of all $K \in \Ga^R$ such that $K \supset K'$ for some $K' \in \tLL^R$.
We denote by $\tGa_b$ the set of all {\it branching rings} in $\tGa$
(rings with more than one child in $\tGa$) and by $\tGa_b'$ the set of all
{\it children in $\tGa$ of branching rings} (each of them may or may not belong to $\tGa$).
Furthermore, we let $\tGa_\ell$ denote the set of all {\em leaves} in $\tGa$
(rings in $\tGa$ containing no other rings from $\tGa$).

Evidently, $\tGa_\ell\subset \tLL^R$.
However, rings from $\tGa_b$ and $\tGa_b'$ may or may not belong to $\tLL^R$.
%
We extend $\tLL^R$ to $\ttLL^R := \tLL^R \cup \tGa_b \cup \tGa_b'$.
In addition, we add to $\ttLL^R$ all rings from $\cK^R_{m_R}$, if they are not there yet.

It is readily seen that each ring $\tR \in \cR_n^R$ can be in the range of
only finitely many $K\in \tGa_\ell$
and each ring $\tR\in\cR_n$ may have only finitely many neighbors $R\in\cR_n$
such that $d(R) \ge d(\tR)$.
Therefore,
$$
\sum_{R\in\cR_n} \# \tGa_\ell^R \le cn.
$$
Obviously $\# \tGa_b \le \# \tGa_\ell$,
$\# \tGa_b' \le \MM \# \tGa_b \le M\# \tGa_\ell$,
implying $\# \tLL^R \le \#\tGa_\ell + \#\tGa_b \le c\# \tGa_\ell$,
and hence $\# \ttLL^R \le c'\# \tGa_\ell$.
Putting these estimates together implies
\begin{equation}\label{num-L}
\sum_{R\in\cR_n} \# \ttLL^R \le cn.
\end{equation}

Observe that, with the exception of all branching rings in $\tLL^R$,
by construction every other ring $K\in \tilde\LL^R$ is
either a leaf, and hence contains no other rings from $\ttLL^R$,
or contains only one ring $K'\in\ttLL^R$ of minimum level,
i.e. $K$ has one descendent $K'$ in $\ttLL^R$.

We now make the {\em final step} in our construction:
We denote by $\FF^R$ the set of all rings from $\tGa_\ell^R$
along with all new rings of the form $\overline{K\setminus K'}$,
where $K\in \tGa_b'$, $K'\in \ttLL^R$, $K' \subset K$ and $K'$ is of minimum level with these properties.
Set $\FF := \cup_{R\in\cR_n} \FF^R$.

The purpose of the above construction becomes clear from the the following

\begin{lem}\label{lem:summary}
The set $\FF$ consists of rings in the sense of Definition~\ref{def-rings}
with parameters depending only on the structural constants $N_0$, $c_0$ and $\beta$.
Also, for any $R\in\cR_n$ the rings in $\FF^R$ have disjoint interiors,
$R= \cup_{K\in \FF^R} K$, and $\# \FF^R \le c\#\ttLL^R$.
Hence,
\begin{equation}\label{summary}
\Omega = \bigcup_{R\in\cR_n} \bigcup_{K\in \FF^R} K
\quad\hbox{and}\quad
\sum_{R\in\cR_n} \#\FF^R \le cn.
\end{equation}
Most importantly, each ring $K\in \FF$ has only finitely many neighbors in $\FF$,
that is, there exists a constant $N_1$  such that for any $K\in \FF$ there are
at most $N_1$ rings in $\FF$ intersecting~$K$.
\end{lem}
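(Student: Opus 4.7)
The plan is to verify the four assertions of the lemma in turn, leveraging the tree structure of $\tGa^R$ inside $\cK^R$ established in Lemma~\ref{lem:hierarchies-of-rings} and the extension procedure that produced $\tLL^R \subset \ttLL^R$. By construction every element of $\FF^R$ is either a leaf $K\in \tGa_\ell^R$, hence already a ring from $\cK^R$, or an ``annular'' set $\overline{K\setminus K'}$ with $K\in \tGa_b'^R$, $K'\in \ttLL^R$ a child-of-branching-ring sitting inside $K$ of minimum level. In either case the underlying convex sets come from $\cK^R$, so I plan to read off the bounded-eccentricity and no-narrow-subregion properties from Lemma~\ref{lem:hierarchies-of-rings}(e) and the size relation \eqref{areas}--\eqref{perimeter}: since $K$ and $K'$ lie in $\cK^R$ at levels that differ by at most a bounded amount (this needs checking, see below), $d(K)\sim d(K')$ is excluded unless $K'$ is comparable to $K$, and in the generic case the annulus inherits the ring axioms with slightly adjusted constants depending only on $N_0, c_0, \beta$.

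For the combinatorial claim $\#\FF^R \le c\#\ttLL^R$, I would argue that $\#\FF^R \le \#\tGa_\ell^R + \#\tGa_b'^R$. Each branching ring has at most $\MM$ children in $\tGa^R$, so $\#\tGa_b'^R \le \MM\,\#\tGa_b^R$, and standard binary-tree counting gives $\#\tGa_b^R \le \#\tGa_\ell^R$. Since $\tGa_\ell^R\subset \tLL^R\subset \ttLL^R$, this yields the bound. Combined with \eqref{num-L} one then gets $\sum_{R\in\cR_n}\#\FF^R \le cn$ as asserted. Disjointness of the interiors inside a fixed $R$ and the covering identity $R=\cup_{K\in\FF^R}K$ follow from the definition of $\FF^R$: walking down the tree $\tGa^R$, every point of $R$ is caught either in a leaf (added directly) or in some $\overline{K\setminus K'}$ where $K\in\tGa_b'^R$ and $K'$ is the minimal $\ttLL^R$-descendant inside $K$ (the latter exists because $\tGa_b'$-children sit above at least one leaf of $\tGa^R$, which belongs to $\tLL^R\subset \ttLL^R$). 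Together with $\Omega=\cup_{R\in\cR_n}R$ this gives the covering of $\Omega$.

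The main obstacle is the finite-neighbor conclusion, since this is precisely what the construction was engineered to enforce. My plan is to show that whenever two rings $K\in\FF^R$ and $K''\in\FF^{R'}$ meet, their diameters are comparable up to a universal factor, and then use the bounded-eccentricity / area-comparable-to-$d^2$ property (cf.\ (\ref{measure-ring}) and \eqref{areas}) to conclude that only $O(1)$ such $K''$ can fit in a neighbourhood of $K$. The size comparability comes from the selection Rule defining $\LL^R$: if $K\in \Ga^R$ is placed in $\LL^R$ because it can see some $\tR\in\cR_n^R$ but none of its children can see all of them, then $d(K)$ is comparable to $d(\tR)$. Extending $\LL^R$ to $\tLL^R$ by adjoining same-level neighbors ensures that near any ring of $\FF^R$ neighbouring rings in $\FF$ come from the same scale. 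One must also handle the annular pieces $\overline{K\setminus K'}$: here $K'$ is the child-of-branching-ring of minimum level inside $K$, so by the Rule any smaller $\tR\in\cR_n^{R}$ intersecting $K$ must in fact intersect $K'$, and therefore cannot intersect $\overline{K\setminus K'}$ except along $\partial K'$ at bounded scale; a geometric packing argument using \eqref{perimeter} then caps the number of neighbours by a constant $N_1(N_0,c_0,\beta,\MM)$.

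In carrying out the last paragraph I would treat two subcases separately: neighbours within the same $\FF^R$ (controlled by the tree structure of $\tGa^R$ and the fact that nearby $\tGa$-rings at the same level are $O(1)$ in number by the minimum-angle/bounded-eccentricity conditions of Lemma~\ref{lem:hierarchies-of-rings}), and cross-ring neighbours $K\in\FF^R$, $K''\in\FF^{R'}$ with $R\ne R'$, where I use that $K''$ can then see $R$ (since $R\cap K''\supset K\cap K''\ne\emptyset$) and apply the selection Rule to both $R$ and $R'$ symmetrically to force $d(K)\sim d(K'')$. Once the scales match and $\FF$-rings are regular in the sense of Definition~\ref{def-rings}, a volume count inside a ball of radius $\sim d(K)$ around $K$ limits the number of competitors to an absolute constant, completing the proof.
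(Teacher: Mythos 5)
Your outline for the first three assertions (ring structure, covering/disjointness, $\#\FF^R \le c\,\#\ttLL^R$) is fine and is essentially the paper's view too; the paper dismisses these in one sentence as following readily from the construction, and your tree-counting estimate $\#\tGa_b^R\le\#\tGa_\ell^R$ is the right tool for the cardinality bound.

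The gap is in the finite-neighbor claim, and specifically in the \emph{within-$R$} subcase, which you dismiss as ``controlled by the tree structure of $\tGa^R$ and the fact that nearby $\tGa$-rings at the same level are $O(1)$ in number.'' The dangerous scenario is not same-level rings; it is a \emph{chain} of nested rings $K_1\subset K_2\subset\cdots\subset K_J$ in $\tLL^R$ at strictly decreasing levels, all of which share (parts of) a single edge $E$ of some $K\in\tLL^R$ with $K^\circ$ disjoint from the $K_j$. If such a chain with $J$ uncontrollably large existed, then even after forming the annular differences $\overline{K_{j+1}\setminus K_j}$ you would end up with unboundedly many elements of $\FF^R$ touching the single ring $K$. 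Bounded eccentricity and the minimum-angle condition of Lemma~\ref{lem:hierarchies-of-rings} do nothing to exclude this: all the $K_j$ are regular, they just occur at many different scales along $E$. Ruling this out requires an argument that forces $K$ itself to be subdivided whenever such a chain appears — this is exactly the content of the paper's technical Lemma~\ref{lem:Lam-R}, which shows that from $K\supset K_1\supset K_2$ with $K_1,K_2\in\tLL^R$ sharing an edge of $K$, one can always produce a ring $K^\star\in\tLL^R$ on the \emph{other} side of $E$ (outside $K$), at a level comparable to $K_1$ or its parent. Iterating this places rings of $\tLL^R$ at every intermediate level outside $K$, which forces $K$ to have been put into $\LL^R$ and hence to be further subdivided, preempting the chain. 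Your proposal neither proves nor states anything equivalent to this lemma, so the within-$R$ bound on neighbors is unsubstantiated.

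A secondary weakness is the cross-$R$ size-comparability step. You invoke the ``can see'' relation for $K''\in\FF^{R'}$ seeing $R$, but that relation is only defined between rings of $\Ga^{R'}$ and rings of $\cR_n^{R'}$, and the Rule certifies membership in $\LL^{R'}$, not in $\FF^{R'}$; translating the selection Rule into the clean two-sided size estimate $d(K)\sim d(K'')$ for adjacent $K\in\FF^R$, $K''\in\FF^{R'}$ is nontrivial, and you offer only a heuristic. The paper handles cross-$R$ neighbors by direct reference to the construction (each $\tR\in\cR_n$ has finitely many larger neighbors, and the leaves $\tGa_\ell^R$ are tuned by the Rule to match the scale of the $\cR_n^R$ neighbors they see), while reserving the real work for the nested-chain obstruction you did not address.
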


\begin{proof}
All properties of the newly constructed rings but the last one given in this lemma
follow readily from their construction.

To show that each ring $K\in \FF$ has only finitely many neighbors in $\FF$
we shall need the following technical

\begin{lem}\label{lem:Lam-R}
Suppose $K \supset K_1 \supset K_2$,
$K\in \Ga^R$, $K_1, K_2\in \tLL^R$,
and both $K_1$ and $K_2$ share parts of an edge $E$ of $K$ located in the interior of $R$.
Then there exists
$K^\star\in \tLL^R$ such that $K^\star\cap K^\circ =\emptyset$, $K^\star\cap E\ne \emptyset$,
and $K^\star$ is either a neighbor of $K_1$ or $K_2$,
or $K^\star$ is a neighbor of the parent of $K_1$ in $\Ga^R$.
\end{lem}

\begin{proof}
If $K_1\in \LL^R$, then by construction all same level neighbors of $K_1$ belong $\tLL^R$
and hence the one that shares the edge of $K_1$ contained in $E$ will be in $\tLL^R$.
We denote this ring by $K^\star$ and apparently it has the claimed properties.
By the same token, if $K_2\in \LL^R$, then one of his neighbors will do the job.

Suppose $K_1, K_2\in \tLL^R\setminus\LL^R$. Then $K_1$ has a neighbor, say, $\hat K_1$
that belongs to $\LL^R$ and $\hat K_1$ is at the level of $K_1$.
If $\hat K_1$ has an edge contained in $E$, then $K^\star:=\hat K_1$ has the claimed property.
Similarly, $K_2$ has a neighbor $\hat K_2\in\LL^R$ at the level of $K_2$.
If $\hat K_2$ has an edge contained in $E$, then $K^\star:=\hat K_2$ will do the job.

Assume that neither of the above is true. Then since $K_1, \hat K_1 \in \Ga^R$
they must have the same parent in $\Ga^R$ that has an edge contained in $E$.
Denote this common parent by $K^\sharp$.
For the same reason, $K_2, \hat K_2 \in \Ga^R$ have a common parent,
say, $K^{\sharp\sharp}$ in $\Ga^R$.
Clearly, $K^\sharp$ and $K^{\sharp\sharp}$ have some edges contained in $E$.
Also, $\hat K_1 \subset K^\sharp$, $\hat K_2 \subset K^\sharp$,
and $\hat K_1^\circ \cap \hat K_2^\circ =\emptyset$.

We claim that $K^\sharp$ belongs to $\LL^R$.
Indeed, the rings from $\cR_n$ that are in the range of $\hat K_1$ are also
in the range of $K^\sharp$.
Also, the rings from $\cR_n$ that are in the range of $\hat K_2$ are also
in the range of $K^\sharp$.
However, obviously neither of the children of $K^\sharp$ can have the range of $K^\sharp$.
Therefore, $K^\sharp$ belongs to $\LL^R$.
Now, just as above we conclude that one of the neighbors of $K^\sharp$
has the claimed property.
\end{proof}

We are now prepared to show that each ring $K\in \FF$ has only finitely many neighbors in $\FF$.
By the construction any $K\in \FF^R$, $R\in \cR_n$, has only finitely many neighbors
that do not belong to $\FF^R$.
Thus, it remains to show that it cannot happen that there exist rings
$K_1\subset K_2 \subset \cdots \subset K_J$, $K_j\in \tLL^R$, with $J$ uncontrollably large
that have edges contained in an edge of a single ring $K\in\tLL^R$ whose interior does not intersect
$K_j$, $j=1, \dots, J$.
But this assertion readily follows by Lemma~\ref{lem:Lam-R}.
\end{proof}

The following lemma will be instrumental in the proof of this theorem.

\begin{lem}\label{lem:Nikolski-Markov}
Assume $0< p, q\le \infty$, $k\ge 1$, $r\ge 0$, and $\nn\in \R^2$ with $|\nn|=1$.
Let the sets $G, H\subset \R^2$ be measurable, $G\subset H$,
and such that there exist balls $B_1, B_2, B_3, B_4$, $B_j=B(x_j, r_j)$,
with the properties:
$B_2\subset G \subset B_1$,
$r_1\le \cf r_2$, and
$B_4\subset H \subset B_3$,
$r_3\le \cf r_4$,
where $\cf\ge 1$ is a constant.
Then for any $P\in \Pi_k$
\begin{equation}\label{Nikolski}
\|P\|_{L^p(G)} \le c|G|^{1/p-1/q}\|P\|_{L^q(G)},
\end{equation}
\begin{equation}\label{Markov}
\|D_\nn^r P\|_{L^p(G)} \le c d(G)^{-r}\|P\|_{L^p(G)},
\end{equation}
and
\begin{equation}\label{norms}
\|P\|_{L^p(G)} \le c(|G|/|H|)^{1/p}\|P\|_{L^p(H)},
\end{equation}
where $c>0$ is a constant depending on $p, q, k, r, \cf$, and
the parameters $N_0$, $c_0$, and $\beta$
from Definition~\ref{def-rings}.
Here $D_\nn^r S$ is the $r$th directional derivative of $S$ in the direction of $\nn$.

Furthermore, inequality $(\ref{norms})$ holds with $Q$ and $H$ replaced by
their images $L(G)$ and $L(H)$, where $L$ is a nonsingular linear transformation of $\R^2$.
\end{lem}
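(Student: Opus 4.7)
The plan is to reduce each of the three inequalities to standard polynomial inequalities on Euclidean balls, exploiting the bounded-eccentricity hypothesis which sandwiches $G$ (resp.\ $H$) between two balls of comparable radii. I will repeatedly use three well-known facts for $P\in\Pi_k$ and any ball $B\subset\R^2$: (i) by scaling to the unit ball and finite-dimensionality, all $L^p$-norms on $\Pi_k$ over $B$ are equivalent up to the scaling $|B|^{1/p-1/q}$, i.e.\ $\|P\|_{L^p(B)}\le c|B|^{1/p-1/q}\|P\|_{L^q(B)}$ (for $q>p$ this is Hölder, for $q\le p$ this is the classical polynomial Nikolskii inequality on a ball); (ii) Markov's inequality $\|D_\nn^r P\|_{L^p(B)}\le c\,d(B)^{-r}\|P\|_{L^p(B)}$; (iii) a Bernstein--Walsh polynomial-growth estimate: for any fixed dilation $\sigma\ge 1$, $\|P\|_{L^\infty(\sigma B)}\le c(\sigma,k)\|P\|_{L^\infty(B)}$.

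For (\ref{Nikolski}), I chain
\[
\|P\|_{L^p(G)}\le \|P\|_{L^p(B_1)}\le c\|P\|_{L^p(B_2)}\le c|B_2|^{1/p-1/q}\|P\|_{L^q(B_2)}\le c|G|^{1/p-1/q}\|P\|_{L^q(G)},
\]
using $G\subset B_1$, the equivalence (i) on the concentric pair $(B_1,B_2)$ with $r_1\sim r_2$, Nikolskii on $B_2$, and finally $B_2\subset G$ together with $|B_2|\sim|G|$. For (\ref{Markov}), I apply Markov on $B_1$ and pass back through $B_2$:
\[
\|D_\nn^r P\|_{L^p(G)}\le \|D_\nn^r P\|_{L^p(B_1)}\le c\,d(B_1)^{-r}\|P\|_{L^p(B_1)}\le c\,d(G)^{-r}\|P\|_{L^p(G)},
\]
where $d(B_1)\sim d(G)$ and $\|P\|_{L^p(B_1)}\le c\|P\|_{L^p(B_2)}\le c\|P\|_{L^p(G)}$ by (i).

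The main difficulty is (\ref{norms}); here the geometric key is that $B_4\subset H\subset B_3$ with $r_3\le \cf r_4$ forces $B_3\subset \sigma B_4$ for some $\sigma=\sigma(\cf)\le 2\cf$, and together with $B_2\subset G\subset H\subset B_3$ this gives $B_2\subset \sigma B_4$. Passing through $L^\infty$ on the small ball $B_2$ and back through $L^p$ on $B_4$, I obtain
\begin{align*}
\|P\|_{L^p(G)}
&\le c\|P\|_{L^p(B_2)}\le c|B_2|^{1/p}\|P\|_{L^\infty(B_2)}\\
&\le c|G|^{1/p}\|P\|_{L^\infty(\sigma B_4)}\le c|G|^{1/p}\|P\|_{L^\infty(B_4)}\\
&\le c|G|^{1/p}|B_4|^{-1/p}\|P\|_{L^p(B_4)}\le c(|G|/|H|)^{1/p}\|P\|_{L^p(H)},
\end{align*}
using (i) and $|B_2|\sim|G|$, the inclusion $B_2\subset\sigma B_4$, the Bernstein--Walsh estimate (iii), Nikolskii on $B_4$, and $B_4\subset H$ with $|B_4|\sim|H|$. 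The key nontrivial step, and the main obstacle, is the Bernstein--Walsh comparison of polynomial sup-norms on a ball and its fixed dilate: without it there is no mechanism to transfer information from the (possibly much larger) $H$ down to the small $G$.

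Finally, the ``furthermore'' assertion concerning a nonsingular linear transformation $L$ of $\R^2$ follows by pullback rather than by re-sandwiching: set $\tilde P:=P\circ L\in\Pi_k$, and note that the change of variables $y=Lx$ gives $\|P\|_{L^p(L(G))}=|\det L|^{1/p}\|\tilde P\|_{L^p(G)}$ and similarly with $H$ in place of $G$, while $|L(G)|/|L(H)|=|G|/|H|$. Applying the just-proved (\ref{norms}) to $\tilde P$ on the original $G$ and $H$ (which do satisfy the sandwich hypothesis) and cancelling the common $|\det L|^{1/p}$ factors yields the desired inequality on $L(G)$, $L(H)$.
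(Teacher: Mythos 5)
Your proof is correct, and for (\ref{Nikolski}) and the ``furthermore'' claim it follows essentially the same norm-equivalence-on-$\Pi_k$-plus-pullback strategy as the paper. Where you genuinely diverge is on (\ref{Markov}) and especially (\ref{norms}). For (\ref{Markov}) you apply a Markov inequality on the ball $B_1$ and then compare back to $G$, whereas the paper reduces to univariate Markov on a square circumscribing $B_1$ and then recovers $G$ via (\ref{norms}); both routes work. For (\ref{norms}) the paper's argument is shorter and avoids the ball gymnastics entirely: once (\ref{Nikolski}) is in hand, one writes
\[
\|P\|_{L^p(G)}\le c|G|^{1/p}\|P\|_{L^\infty(G)}\le c|G|^{1/p}\|P\|_{L^\infty(H)}\le c(|G|/|H|)^{1/p}\|P\|_{L^p(H)},
\]
using (\ref{Nikolski}) on $G$ with $q=\infty$, the trivial inclusion $G\subset H$ at the $L^\infty$ level, and (\ref{Nikolski}) on $H$ with $p=\infty$, $q=p$. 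You instead descend to $B_2$ and $B_4$ and invoke an explicit Bernstein--Walsh dilate estimate $\|P\|_{L^\infty(\sigma B_4)}\le c\|P\|_{L^\infty(B_4)}$. That estimate is correct and your chain closes, but your assertion that this dilate comparison is ``the key nontrivial step'' and that ``without it there is no mechanism to transfer information from $H$ down to $G$'' is misleading: the transfer is accomplished by the trivial sup-norm monotonicity $\|P\|_{L^\infty(G)}\le\|P\|_{L^\infty(H)}$, and the only nontrivial ingredient is (\ref{Nikolski}) itself, in whose proof a dilate comparison is already implicitly absorbed. (A minor bookkeeping point: in the displayed chain for (\ref{Nikolski}) and (\ref{Markov}) you cite fact (i) for the step $\|P\|_{L^p(B_1)}\le c\|P\|_{L^p(B_2)}$, but (i) as you stated it is a same-ball statement; this step is really an instance of (iii), or of norm equivalence on $\Pi_k$ after normalizing $B_2$ to the unit ball, which is exactly what the paper does.) Your pullback treatment of the affine-invariance claim is a nice explicit version of what the paper dismisses as obvious.
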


\begin{proof}
Inequality (\ref{Nikolski}) holds whenever $B_2=B(0, 1)$ and $B_1=B(0, \cd)$ with $\cd={\rm constant}$
by the fact that any two (quasi)norms on $\Pi_k$ are equivalent.
This implies that (\ref{Nikolski}) is valid in the case when
$B_2=B(0, 1)$ and $B_2\subset B_1$, where $B_1=B(x_2, \cd/2)$.
Then (\ref{Nikolski}), in general, follows by rescaling.
Inequality (\ref{norms}) is obvious when $p=\infty$.
In general, it follows from the case $p=\infty$ and application of (\ref{Nikolski})
to $G$ with $p$ and $q=\infty$ and to $H$ with $p=\infty$, $q=p$.
Inequality (\ref{Markov}) is an easy consequence of the Markov inequality for univariate polynomials
whenever $G$ is a square.  Then in general it follows by inscribing $B_1$ in
a smallest possible cube and then applying it for the cube and using (\ref{norms}).
The last claim in the lemma is obvious.
\end{proof}

\medskip

\noindent
{\em Proof of Theorem~\ref{thm:smooth-Bernstein-1}.}
We shall only consider the case when $\Omega \subset \R^2$ is a compact polygonal domain.
Let $S\in \cS(n, k, r)$ and suppose $S$ is represented as in (\ref{def-smooth-splines-1}),
that is,
\begin{equation}\label{rep-linear-1}
S=\sum_{R\in\cR_n} P_R\ONE_{R},
\quad S\in W^r(\Omega),
\quad P_R\in\Pi_k,
\end{equation}
where $\cR_n$ is a collection of $\le n$ rings with disjoint interiors such that
$\Omega=\cup_{R\in\cR_n} R$.

We are now prepared to complete the proof of Theorem~\ref{thm:smooth-Bernstein-1}.
From (\ref{rep-linear-1}) and because $\FF$ is a refinement of $\cR_n$ it follows that
$S$ can be represented in the form
\begin{equation}\label{rep-linear-2}
S=\sum_{K\in\FF} P_K\ONE_{K},
\quad \quad S\in W^r(\Omega),
\quad P_K\in\Pi_k.
\end{equation}
Here $\FF$ is the collection of at most $cn$ rings from above with disjoint interiors such that
$\Omega=\cup_{K\in\FF} K$.

We next introduce some convenient notation.
For any ring $K\in \FF$ we denote by $\GG_K$ the set of all rings $K'\in \FF$ such that
$K\cap K' \ne \emptyset$,
$\cE_K$ will denote the set of all segments (edges)
from the boundary $\partial K$ of $K$,
and $\cV_K$ will be the set of all vertices of the polygon $\partial K$ (end points of edges from $\cE_K$).

The fact that $\FF$ consists of rings in the sense of Definition~\ref{def-rings} implies the following

\begin{property}\label{property-1}
There exists a constant $0< \cch <1$
such that if $E=[v_1,v_2]$ is an edge shared by two rings $K, K'\in \FF$
then for any $x\in E$ with $|x-v_j| \ge \rho$, $j=1, 2$ for some $\rho>0$
we have $B(x, \cch \rho) \subset K\cup K'$.
\end{property}

Fix $t>0$. For each ring $K\in \FF$  we define
$$
K_t:= \{x\in K: \dist(x, \partial K) \le kt\}.
$$
Write
$\Omega_t:=\cup_{K\in\FF} K_t$.

Let $h\in \R^2$ with norm $|h|\le t$ and set $\nn:=|h|^{-1}h$.
For $S$ is a polynomial of degree $\le k-1$ on each $K\in\FF$ we have
$\Delta^k_hS(x) =0$ for $x\in \cup_{K\in\FF} K\setminus K_t$.
Therefore,
\begin{equation}\label{Delta-Delta-K}
\|\Delta^k_hS\|_{L^\tau(\Omega)} = \|\Delta^k_hS\|_{L^\tau(\Omega_t)}.
\end{equation}

Let $K\in\FF$ and assume $d(K) > 2kt/\cch$  with $0<\cch<1$ the constant from Property~\ref{property-1}.
Denote $\GG_K^t:= \{K'\in \GG_K: d(K) > 2kt/\cch\}$,
$B_v:=B(v, 2kt/\cch)$, $v\in \cV_K$,
and
$$
\cH_K^t:= \{K'\in \FF: d(K') > 2kt/\cch \;\;\hbox{and} \;\; K'\cap (K+B(0, 2kt/\cch))\ne \emptyset\}.
$$
Observe that because $d(K) > 2kt/\cch$ the number of rings in $\cH_K^t$ is uniformly bounded.

Let $x\in \Omega_t$ be such that $[x, x+kh]\cap K \ne \emptyset$.
Two cases are to be considered here.

(a) Let $[x, x+kh] \not\subset \cup_{v\in\cV_K} B_v$.
Then $[x, x+kh]$ intersects some edge $E\in \cE_K$ such that $\ell(E) \ge 2kt/\cch$,
and $[x, x+kh]$ cannot intersect another edge $E'\in \cE_K$ with this property
or an edge $E'\in \cE_K$ with $\ell(E') < 2kt/\cch$.

Suppose that the edge $E=:[v_1, v_2]$ is shared with $K'\in\FF$
and $y:= E\cap[x, x+kh]$.
Evidently, $|y-v_j| > kt/\cch$, $j=1,2$, and in light of Property~\ref{property-1} we have
$[x, x+kh] \subset B(y, kt) \subset K\cup K'$.
Clearly,
\begin{equation}\label{est-Delta-S1}
|\Delta_h^k S(x)|
\le ct^r\|D_\nn^rS\|_{L^\infty([x, x+kh])}
\le ct^r\|D_\nn^rS\|_{L^\infty(K)} + ct^r\|D_\nn^rS\|_{L^\infty(K')}.
\end{equation}

(b) Let $[x, x+kh] \subset \cup_{v\in\cV_K} B_v$.
Then we estimate $|\Delta_h^k S(x)|$ trivially:
\begin{equation}\label{est-Delta-S2}
|\Delta_h^k S(x)|
\le 2^k\sum_{\ell=0}^k|S(x+\ell h)|.
\end{equation}
Using (\ref{est-Delta-S1}) - (\ref{est-Delta-S2}) we obtain
\begin{align}\label{est-Delta-S3}
\|\Delta_h^k &S\|_{L_\tau(K_t)}^\tau
\le c\sum_{K'\in\GG_K^t} t d(K') t^{r\tau}\|D_\nn^rS\|_{L^\infty(K')}^\tau\notag\\
&+ c\sum_{K'\in\cH_K^t}\sum_{v\in \cV_{K}} \|S\|_{L^\tau(B_v\cap K')}^\tau
+ c\sum_{K''\in\FF: d(K'') \le 2kt/\cch}\|S\|_{L^\tau(K''\cap(K+[0, kh]))}^\tau.
\end{align}
Note that the number of rings $K'\in\cH_K^t$ such that $K'\cap B_v\ne \emptyset$
for some $v\in\cV_K$ is uniformly bounded.

By Lemma~\ref{lem:Nikolski-Markov} it follows that
$\|D_\nn^rS\|_{L^\infty(K')} \le c d(K')^{-r-2/p}\|S\|_{L^p(K')}$
and if the ring $K'\in \cH_K^t$ and $v\in \cV_{K}$, then
\begin{align*}
\|S\|_{L^\tau(B_v\cap K')}^\tau
&\le c(|B_v|/|K'|)\|S\|_{L^\tau(K')}^\tau
\le ct^2|K'|^{-1}\|S\|_{L^\tau(K')}^\tau\\
&\le ct^2|K'|^{-1+\tau(1/\tau-1/p)}\|S\|_{L^p(K')}^\tau
\le ct^2d(K')^{\tau s-2}\|S\|_{L^p(K')}^\tau.
\end{align*}
We use the above estimates in (\ref{est-Delta-S3}) to obtain
\begin{align}\label{est-Delta-S4}
\|\Delta_h^k S&\|_{L_\tau(K_t)}^\tau
\le c\sum_{K'\in\GG_K^t} t^{1+r\tau} d(K')^{1-r\tau-2\tau/p}\|S\|_{L^p(K')}^\tau\notag\\
&+ c\sum_{K'\in\cH_K^t} t^{2} d(K')^{\tau s-2}\|S\|_{L^p(K')}^\tau
+ c\sum_{K''\in\FF: d(K'') \le 2kt/\cch}\|S\|_{L^\tau(K''\cap(K+[0, kh]))}^\tau.
\end{align}

Denote by $\Omega_t^\star$ the set of all $x\in \Omega_t$ such that
$[x, x+kh] \subset \Omega$ and
$$
[x, x+kh] \subset \cup \{K\in\FF: d(K) \le 2kt/\cch\}.
$$
In this case we shall use the obvious estimate
$$
\|\Delta_h^kS\|_{L^\tau(\Omega_t^\star)}^\tau
\le c \sum_{K\in\FF: d(K) \le 2kt/\cch}\|S\|_{L^\tau(K)}^\tau.
$$
This estimate along with (\ref{est-Delta-S4}) yields
\begin{align*}
\omega_k(S, t)_\tau^\tau
&\le c \sum_{K\in\FF: d(K) \ge 2kt/\cch}  t^{1+r\tau}d(K)^{1-r\tau-2\tau/p}\|S\|_{L^p(K)}^\tau \notag\\
&
+ c\sum_{K\in\FF: d(K) \ge 2kt/\cch} t^2 d(K)^{s\tau-2}\|S\|_{L^p(K)}^\tau
+ c\sum_{K\in\FF: d(K) \le 2kt/\cch} \|S\|_{L^\tau(K)}^\tau.
\end{align*}
Here we used the fact that only finitely many (uniformly bounded number)
of the rings involved in the above estimates may overlap at a time
due to Lemma~\ref{lem:summary}.
For the norms involved in the last sum we use the estimate
$
\|S\|_{L^\tau(K)}^\tau \le cd(K)^{s\tau}\|S\|_{L^p(K)}^\tau,
$
which follows by Lemma~\ref{lem:Nikolski-Markov},
to obtain
\begin{align*}
&\omega_k(S, t)_\tau^\tau
\le c \sum_{K\in\FF: d(K) \ge 2kt/\cch}  t^{1+r\tau}d(K')^{1-r\tau-2\tau/p}\|S\|_{L^p(K')}^\tau \notag\\
\qquad &
+ c\sum_{K\in\FF: d(K) \ge 2kt/\cch} t^2 d(K)^{s\tau-2}\|S\|_{L^p(K)}^\tau
+ c\sum_{K\in\FF: d(K) \le 2kt/\cch} d(K)^{s\tau}\|S\|_{L^p(K)}^\tau.
\end{align*}
We insert this estimate in (\ref{def-smooth-Besov}) and
interchange the order of integration and summation to obtain
\begin{align*}
|S|_{B^{s,k}_\tau}^\tau
&=\int_0^\infty t^{-s\tau-1}\omega_k(S, t)_\tau^\tau dt
\le c \sum_{K\in\FF}  d(K)^{1-r\tau-2\tau/p}\|S\|_{L^p(K)}^\tau \int_0^{\cch d(K)/2k} t^{-s\tau+r\tau} dt \\
\qquad &
+ c\sum_{K\in\FF} d(K)^{s\tau-2}\|S\|_{L^p(K)}^\tau \int_0^{\cch d(K)/2k} t^{-s\tau+1} dt\\
&+ c\sum_{K\in\FF} d(K)^{s\tau}\|S\|_{L^p(K)}^\tau \int_{\cch d(K)/2k}^\infty t^{-s\tau-1} dt.
\end{align*}
Observe that $-s\tau + r\tau > -1$ is equivalent to $s/2<r+1/p$ and
$-s\tau + 1 > -1$ is equivalent to  $s<2/\tau = s+2/p$.
Therefore, the above integrals are convergent and taking into account that
$2-2\tau/p-s\tau = 2\tau(1/\tau-1/p-s/2) = 0$ we obtain
\begin{align*}
|S|_{B^{s,k}_\tau}^\tau
\le c\sum_{K\in\FF} \|S\|_{L^p(K)}^\tau
\le cn^{\tau(1/\tau-1/p)}\Big(\sum_{K\in\FF} \|S\|_{L^p(K)}^\tau\Big)^{\tau/p}
=cn^{\tau s/2}\|S\|_{L^p(\Omega)}^\tau,
\end{align*}
where we used H\"{o}lder's inequality.
This completes the proof.
\qed

\subsection{Proof of the Bernstein estimate (Theorem~\ref{thm:smooth-Bernstein-2})
in the nonnested case}\label{subsec:proof-smooth-Bern-est-2}

For the proof of Theorem~\ref{thm:smooth-Bernstein-2}
we combine ideas from the proofs of
Theorem~\ref{thm:Bernstein} and Theorem~\ref{thm:smooth-Bernstein-1}.
We shall adhere to a large extent to the notation introduced in
the proof of Theorem~\ref{thm:Bernstein} in \S\ref{subsec:proof-Bernstein}.
An important distinction between this proof and the proof of Theorem~\ref{thm:Bernstein}
is that the directional derivatives $D_\nn^{k-1}S$ of any $S\in \cS(n, k)$
are piecewise constants along the respective straight lines rather than
$S$ being a piecewise constant.

We consider the case when $\Omega \subset \R^2$ is a compact polygonal domain.
Assume $S_1, S_2\in \cS(n, k)$, $n\ge n_0$.
Then each $S_j$ ($j=1, 2$) can be represented in the form
$
S_j=\sum_{R\in \cR_j} P_R \ONE_{R},
$
where $\cR_j$ is a set of at most $n$ rings in the sense of Definition~\ref{def-rings}
with disjoint interiors and such that
$\Omega = \cup_{R\in \cR_j} R$, $P_R \in \Pi_k$, and
$S_j\in W^{k-2}(\Omega)$.

Just as in the proof of Theorem~\ref{thm:smooth-Bernstein-1} there exist
subdivisions $\FF_1$, $\FF_2$ of the rings from $\cR_1$, $\cR_2$ with
the following properties, for $j=1, 2$:

(a) $\FF_j$ consists of rings in the sense of Definition~\ref{def-rings}
with parameters $N_0^\star$, $c_0^\star$, and $\beta^\star$
depending only on the structural constants $N_0$, $c_0$, and $\beta$.

(b) $\cup_{R\in\FF_j} R = \Omega$ and $\# F_j \le cn$.

(c) There exists a constant $N_1$  such that for any $R\in \FF_j$ there are
at most $N_1$ rings in $\FF_j$ intersecting~$R$ ($R$ has $\le N_1$ neighbors in $\FF_j$).

(d) $S_j$ can be represented in the form
$S_j = \sum_{R\in \FF_j} P_R\ONE_{R}$ with $P_R\in \Pi_k$.

Now, just as in the proof of Theorem~\ref{thm:Bernstein} we denote by $\cU$
the collection of all maximal connected sets obtained by intersecting
rings from $\FF_1$ and $\FF_2$.
By (\ref{num-intesect}) there exists a constant $c>0$ such that
$ 
\# \cU \le cn.
$ 

We claim that there exists a constant $N_2$ such that
for any $U\in\cU$ there are no more than $N_2$ sets $U'\in\cU$ which intersect $U$,
i.e. $U$ has at most $N_2$ neighbors in $\cU$.
Indeed, let $U\in \cU$ be a maximal connected component of $R_1\cap R_2$
with $R_1\in \FF_1$ and $R_2\in \FF_2$.
Then using the fact that the ring $R_1$ has finitely many neighbors in $\FF_1$
and $R_2$ has finitely many neighbors in $\FF_2$ we conclude that
$U$ has finitely many neighbors in $\cU$.

Further, we introduce the sets $\cA$ and $\cT$
just as in the proof of Theorem~\ref{thm:Bernstein}.

\subsection*{Trapezoids}
Our main concern will be in dealing with the trapezoids $T\in\cT$.
We next use the fact that any ring from $\FF_j$, $j=1, 2$, has at most $N_1$ neighbors in $\FF_j$
to additionally subdivide the trapezoids from $\cT$ into trapezoids whose long sides
are sides of good triangles for rings in $\FF_1$ or $\FF_2$.

Consider an arbitrary trapezoid $T\in\cT$.
Just as in \S\ref{subsec:proof-Bernstein} we may assume that
$T$ is a maximal isosceles trapezoid contained in $\tri_{E_1}\cap\tri_{E_2}$,
where $\tri_{E_j}$ ($j=1, 2$) is a good triangle for a ring $R_j\in\FF_j$,
and
$T$ is positioned so that its vertices are the points:
$$
v_1:=(-\delta_1/2, 0),\;\; v_2:=(\delta_1/2, 0),\;\; v_3:=(\delta_2/2, H),\;\; v_4:=(-\delta_2/2, H),
$$
where $0\le\delta_2\le\delta_1$ and $H >\delta_1$.
Let $L_1:=[v_1, v_4]$ and $L_2:=[v_2, v_3]$ be the two equal (long) legs of $T$.
We assume that $L_1\subset E_1$ and $L_2\subset E_2$.
See Figure 8.

By Lemma~\ref{lem:summary} it follows that
there exist less than $N_1$ rings $K_\ell'\in \FF_1$, $\ell=1, \dots,\nu'$,
each of them with an edge or part of an edge contained in $L_1$.
By Definition~\ref{def-rings}, each of them can be subdivided into at most two segments
so that each of these is a side of a good triangle.
Denote by $I_\ell'$, $\ell=1, \dots,m'$, these segments,
and by $\tri_{I_\ell'}$, $\ell=1, \dots,m'$, the respective good triangles attached to them.
More precisely, $I_\ell'$ is a side of $\tri_{I_\ell'} \subset K_\ell'$ and
$\tri_{I_\ell'}$ is a good triangle for $K_\ell'$.
Thus we have
$L_1=\cup_{\ell=1}^{m'} I_\ell'$,
where the segments $I_\ell'$, $\ell=1, \dots,m'$, are with disjoint interiors.

Similarly, there exist segments $I_\ell''$, $\ell=1, \dots,m''$,
and attached to them good triangles $\tri_{I_\ell''}$, $\ell=1, \dots,m''$, for rings from $\FF_2$,
so that $L_2=\cup_{\ell=1}^{m''} I_\ell''$.

Denote by $v_\ell'$, $\ell=1, \dots,m'+1$, the vertices of
the triangles $\tri_{I_\ell'}$, $\ell=1, \dots,m'$, on $L'$ so that
$I_\ell'=[v_\ell', v_{\ell+1}']$
and assume that their orthogonal projections onto the $x_2$-axis
$p_\ell'$, $\ell=1, \dots,m'+1$, are ordered so that
$0=p_1' < p_2' <\cdots <p_{m'+1}'=H$.
Exactly in the same way we define the vertices $v_\ell''$, $\ell=1, \dots,m''+1$,
of the triangles $\tri_{I_\ell''}$ and their projections onto the $x_2$-axis
$0=p_1'' < p_2'' <\cdots <p_{m''+1}''=H$.

For any $q\in [0, H]$ we let $\delta(q)$ be the distance between the points
where the line with equation $x_2=q$ intersects $L_1$ and $L_2$.
Thus $\delta(0) = \delta_1$ and $\delta(H) = \delta_2$,
and $\delta(q)$ is linear.

Inductively, starting from $q_1=0$ one can easily subdivide the interval $[0, H]$ by means of points
$$
0=q_1 < q_2 <\cdots < q_{m+1}=H, \quad\hbox{$m \le m'+m'' \le 2N_1$}
$$
with the following properties, for $k=1, \dots, m$, either

(a) $\delta(q_k)\le q_{k+1}-q_k <2\delta(q_k)$

\noindent
or

(b) $q_{k+1}-q_k > \delta(q_k)$ and
$(q_k, q_{k+1})$ contains no points $p_\ell'$ or $p_\ell''$.

We use the above points to subdivide the trapezoid $T$.
Let $T_k$, $k=1, \dots, m$, be the trapezoid bounded by $L_1$, $L_2$, and
the lines with equations $x_2=q_k$ and $x_2=q_{k+1}$.

We now separate the ``bad" from the ``good" trapezoids $T_k$.
Namely, if property (a) from above is valid then
$T_k$ is a ring and we place $T_k$ in $\cA$;
if property (b) is valid, then $T_k$ is a ``bad" trapezoid and
we place $T_k$ in $\cT$.
We apply the above procedure to all trapezoids.

\subsection*{Properties of New Trapezoids}

We now consider an arbitrary trapezoid $T$ from the above defined $\cT$ (the set of bad trapezoids).
We next summarise the properties of $T$.
It will be convenient to us to use the same notation as above
as well as in the proof of Theorem~\ref{thm:Bernstein}.
We assume that $T$ is an isosceles trapezoid contained in $\tri_{E_1}\cap\tri_{E_2}$,
where $\tri_{E_j}$, $j=1, 2$, is a good triangle for a ring $R_j\in\FF_j$,
and
$T$ is positioned so that its vertices are the points:
$$
v_1:=(-\delta_1/2, 0),\;\; v_2:=(\delta_1/2, 0),\;\; v_3:=(\delta_2/2, H),\;\; v_4:=(-\delta_2/2, H),
$$
where $0\le\delta_2\le\delta_1$ and $H >\delta_1$.
Let $L_1:=[v_1, v_4]$ and $L_2:=[v_2, v_3]$ be the two equal (long) sides of $T$.
We assume that $L_1\subset E_1$ and $L_2\subset E_2$.
See Figure 8.

As a result of the above subdivision procedure, there exists a triangle
$\tri_{L_1}$ with a side $L_1$ such that $\tri_{L_1}$ is a good triangle for some
ring $\tR_1\in \FF_1$
and $\tri_{L_1}^\circ\cap \tri_{E_1}^\circ =\emptyset$.
For the same reason,
there exists a triangle
$\tri_{L_2}$ with a side $L_2$ such that $\tri_{L_2}$ is a good triangle for some
ring $\tR_2\in \FF_2$
and $\tri_{L_2}^\circ\cap \tri_{E_2}^\circ =\emptyset$.

Observe that $\tri_{E_1}$ and $\tri_{E_2}$ are good triangles and hence
the angles of $\tri_{E_j}$ adjacent to $E_j$ are of size $\beta^\star/2$, $j=1,2$.
Likewise, $\tri_{L_1}$ and $\tri_{L_2}$ are good triangles and hence
the angles of $\tri_{L_j}$ adjacent to $L_j$ are of size $\beta^\star/2$, $j=1,2$.
Therefore, we may assume that $\tri_{L_1} \subset \tri_{E_2}$ and $\tri_{L_1} \subset \tri_{E_2}$.
Consequently, $S_1$ is a polynomial of degree $<k$ on $\tri_{L_1}$
and another polynomial of degree $<k$ on $\tri_{L_2}$.
By the same token, $S_2$ is a polynomial of degree $<k$ on $\tri_{L_1}$
and another polynomial of degree $<k$ on $\tri_{L_2}$.
We shall assume that $\tri_{L_1} \subset A_1$ and $\tri_{L_2} \subset A_2$,
where $A_1, A_2\in \cA$.

Further, denote by $D_1$ and $D_2$ the bottom and top sides of $T$.
We shall denote by $\cV_T=\{v_1, v_2, v_3, v_4\}$ the vertices of $T$,
where $v_1$ is the point of intersection of $L_1$ and $D_1$
and the other vertices are indexed counter clockwise.

We shall use the notation $\delta_1(T) :=\delta_1$ and $\delta_2(T) :=\delta_2$.
We always assume that $\delta_1(T) \ge \delta_2(T)$.
Clearly, $d(T)\sim H$;
more precisely $H < d(T) < H +\delta_1+ \delta_2$.

\smallskip

Observe that by the construction of the sets $\cT$, $\cA$,  and (\ref{num-intesect}) it follows that
$\cA \cup \cT$ consists of polygonal sets with disjoint interiors,
$\cup_{A\in \cA} A \cup_{T\in \cT} T=\Omega$,
there exists a constant $c>0$ such that
\begin{equation}\label{card-UAT}
\# \cA \le cn, \quad \# \cT \le cn,
\end{equation}
and there exists a constant $N_3$ such that each set from $\cA\cup \cT$
has at most $N_3$ neighbors in $\cA\cup\cT$.

We summarize the most important properties of the sets from $\cT$ and $\cA$
in the following

\begin{lem}\label{lem:property-2}
The following properties hold for some constant $0< \ct <1$
depending only on the structural constants $N_0$, $c_0$ and $\beta$ of the setting:


$(a)$ Let $T\in \cT$ and assume the notation related to $T$ from above.
If $x\in L_1$ with $|x-v_j| \ge \rho$, $j=1, 4$,
then $B(x, \ct \rho) \subset \tri_{L_1}\cup\tri_{L_2}$.
Also, if $x\in L_2$ with $|x-v_j| \ge \rho$, $j=2, 3$,
then $B(x, \ct \rho) \subset \tri_{L_1}\cup\tri_{L_2}$.
Furthermore, if $x\in D_1$ with $|x-v_j| \ge \rho$, $j=1, 2$,
then $B(x, \ct \rho) \subset \tri_{E_1}\cap\tri_{E_2}$,
and similarly for $x\in D_2$.

$(b)$ Assume that $E=[w_1,w_2]$ is an edge shared by two sets $A, A'\in \cA$.
Let $\cV_A$ be the set of all vertices on $\partial A$ (end points of edges)
and let $\cV_{A'}$ be the set of all vertices on $\partial A'$.
If $x\in E$ with $|x-w_j| \ge \rho$, $j=1, 2$, for some $\rho>0$, then
\begin{equation}\label{cover-ball}
B(x, \ct \rho) \subset A\cup A' \cup_{v\in \cV_A\cup\cV_{A'}} B(v, \rho).
\end{equation}
\end{lem}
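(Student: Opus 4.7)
The plan is to establish both parts by direct geometric verification, exploiting the uniform good-triangle structure built into the subdivision. Every relevant boundary segment is the base of a good triangle with adjacent base angles $\beta^\star/2$, and the quantitative input used throughout is the elementary fact that a triangle with base $L$ and base angles $\beta^\star/2$ contains, above any point $x\in L$ with $\min_j|x-v_j|\ge\rho$, a half-disc at $x$ of radius $c(\beta^\star)\rho$ on the triangle's side of $L$, where $c>0$ depends only on $\beta^\star$.

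For part (a) with $x\in L_1$, I would split $B(x,\ct\rho)$ along the line through $L_1$ into two half-discs. The half-disc on the side of $L_1$ opposite to $T$ lies in $\tri_{L_1}$ by the elementary fact, since $\tri_{L_1}$ is precisely the good triangle attached there. The half-disc on the $T$-side will be placed inside $\tri_{L_2}$ via the inclusion $\tri_{L_2}\subset\tri_{E_1}$ recorded just before the lemma: this inclusion, together with the maximality of $T$ as an isosceles trapezoid in $\tri_{E_1}\cap\tri_{E_2}$, forces $\tri_{L_2}$ to cover a tubular neighbourhood of the interior of $L_1$ on the $T$-side of width comparable to $\min_j|x-v_j|\tan(\beta^\star/2)$, and shrinking $\ct$ suffices. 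The case $x\in L_2$ is completely symmetric via $\tri_{L_1}\subset\tri_{E_2}$. For $x$ on a base $D_1$ or $D_2$, both $\tri_{E_1}$ and $\tri_{E_2}$ contain all of $T$, so $x$ is an interior point of both triangles whose distance to their boundaries is at least $c\min_j|x-v_j|\sin(\beta^\star/2)$; this yields $B(x,\ct\rho)\subset\tri_{E_1}\cap\tri_{E_2}$ directly.

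For part (b), the shared edge $E=[w_1,w_2]$ is, by the construction of $\cA$, a subsegment either of an edge of some ring in $\FF_1\cup\FF_2$ or of a base/leg of a ``good'' trapezoid produced by the additional subdivision and now sitting in $\cA$. In either scenario, Definition~\ref{def-rings} supplies a pair of good triangles $\tri_E\subset A$ and $\tri_E'\subset A'$ sharing the base $E$ with base angles $\beta^\star/2$. The elementary half-disc fact applied on each side of $E$ then gives $B(x,\ct\rho)\subset A\cup A'$ for every $x\in E$ whose distance to every vertex of $\partial A\cup\partial A'$ is at least $\rho$. Points within distance $\rho$ of some other vertex, where a good triangle could be truncated prematurely, are absorbed by the correction $\cup_{v\in\cV_A\cup\cV_{A'}}B(v,\rho)$, which is exactly the purpose of that term in the statement.

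The principal obstacle is the $T$-side half-disc in part (a): the coverage by $\tri_{L_2}$ is not visible from that triangle in isolation and depends essentially on the inclusion $\tri_{L_2}\subset\tri_{E_1}$ together with the fact that $T$ was constructed as a maximal trapezoid inside $\tri_{E_1}\cap\tri_{E_2}$. Once this geometric fact is quantified, the constant $\ct$ emerges as a fixed multiple of $\sin(\beta^\star/2)$, depending only on the structural constants $N_0$, $c_0$, $\beta$ of the original setting, and both parts follow.
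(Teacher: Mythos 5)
For part (a) the paper only asserts that it ``follows readily from the properties of the trapezoids'', but the argument you supply contains an error: you claim the $T$-side half-disc of $B(x,\ct\rho)$, $x\in L_1$, lies in $\tri_{L_2}$, yet $\tri_{L_2}$ is by construction the good triangle attached to $L_2$ on the side \emph{opposite} $\tri_{E_2}$, and $T\subset\tri_{E_2}$, so $\tri_{L_2}$ is separated from $L_1$ by the entire trapezoid $T$. It cannot cover a tubular neighbourhood of $L_1$ on the $T$-side; neither the inclusion $\tri_{L_2}\subset\tri_{E_1}$ nor the maximality of $T$ repairs this, so the step you yourself flagged as ``the principal obstacle'' is not actually closed.

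For part (b) your route is genuinely different from the paper's and has a gap. The paper argues by contradiction: if the inclusion \eqref{cover-ball} fails, one produces a point $y$ on another edge $\tilde E=[u_1,u_2]$ of $\partial A$ with $|y-x|<\rho$ and $|y-u_j|\ge\rho$; then an isosceles trapezoid $\check T\subset\tri_E\cap\tri_{\tilde E}$ with legs on $E$ and $\tilde E$ would have been created during the subdivision of the sets from $\cU$ and placed in $\cT$, contradicting $A\in\cA$. Your direct argument instead assumes that ``Definition~\ref{def-rings} supplies a pair of good triangles $\tri_E\subset A$ and $\tri_E'\subset A'$'', but that definition only gives good triangles inside the rings of $\FF_1$ and $\FF_2$; the sets $A,A'\in\cA$ arise from intersections of rings from both families with trapezoids removed, so the good triangle for $E$ may be truncated by another edge $\tilde E$ of $\partial A$. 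Your fallback of absorbing failures into $\cup_v B(v,\rho)$ handles only truncations occurring near a vertex; it does nothing against a long edge $\tilde E$ running close and nearly parallel to $E$ at a point far from $u_1,u_2$. That is exactly the configuration the paper's trapezoid contradiction rules out, and it is the essential idea missing from your proof.
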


\begin{proof}
Part (a) of this lemma follows readily from the properties of the trapezoids.
Part (b) needs clarification.
Suppose that for some
$x\in E$ with $|x-w_j| \ge \rho$, $j=1, 2$, $\rho>0$,
the inclusion (\ref{cover-ball}) is not valid.
Then exists a point $y$ from an edge $\tilde{E}=[u_1, u_2]$ of, say, $\partial A$ such that
$|y-x|<\rho$ and $|y-u_j|\ge \rho$, $j=1, 2$.
A~simple geometric argument shows that if the constant $\ct$ is sufficiently small
(depending only on the parameter $\beta$ of the setting),
then there exists an isosceles trapezoid $\check{T} \subset \tri_{E}\cap \tri_{\tilde{E}}$
with two legs contained in $E$ and $\tilde{E}$ such that each leg is longer than its larger base.
But then the subdivision of the sets from $\cU$ (see the proof of Theorem~\ref{thm:Bernstein})
would have created a trapezoid in $\cT$ that contains part of $A$.
This is a contradiction which shows that Part (b) holds true.
\end{proof}

We have the representation
\begin{equation}\label{rep-S1-S2-smooth}
S_1(x)-S_2(x) = \sum_{A\in \cA} P_A\ONE_A(x)
+ \sum_{T\in \cT} P_T\ONE_T(x),
\end{equation}
where $P_A, P_T\in \Pi_k$.
Note that $S_1-S_2\in W^{k-2}(\Omega)$.

Let $0<s/2<k-1+1/p$ and $\tau \le 1$.
Fix $t>0$ and
let $h\in \R^2$ with norm $|h|\le t$.
Write $\nn:=|h|^{-1}h$ and assume $\nn=:(\cos \theta, \sin\theta)$, $-\pi<\theta\le \pi$.

Since $S_1, S_2\in W^{k-2}(\Omega)$ we have the following representation of $\Delta^{k-1}_hS_j(x)$:
$$ 
\Delta^{k-1}_hS_j(x)
= |h|^{k-1}\int_\R D_\nn^{k-1} S_j\big(x+u\nu\big)M_{k-1}(u) du,
$$ 
where $M_{k-1}(u)$ is the B-spline with knots $u_0, u_1, \dots, u_{k-1}$,
$u_\ell:= \ell|h|$.
In fact,
$
M_{k-1}(u)=(k-1)[u_0, \dots, u_{k-1}](\cdot - u)_+^{k-2}
$
is the divided difference.
As is well known, $0\le M_{k-1} \le c|h|^{-1}$,
$\supp M_{k-1} \subset [0, (k-1)|h|]$, and $\int_\R M_{k-1}(u) du =1$.
Therefore, by
$\Delta^k_hS_j(x)=\Delta^{k-1}_hS_j(x+h)-\Delta^{k-1}_hS_j(x)$,
whenever $[x, x+kh]\subset \Omega$,
we arrive at the representation
\begin{equation}\label{rep-Delta-k}
\Delta^k_hS_j(x)
= |h|^{k-1}\int_0^{k|h|} D_\nn^{k-1} S_j\big(x+uv\big)M^*_k(u) du,
\end{equation}
where $M^*_k(u):= M_{k-1}(u-|h|)-M_{k-1}(u)$.

\smallskip

In what follows we estimate
$\|\Delta_h^k S_1\|_{L^\tau(G)}^\tau - \|\Delta_h^k S_2\|_{L^\tau(G)}^\tau$
for different subsets $G$ of $\Omega$.


\subsection*{Case 1}

Let $T \in \cT$ be such that $d(T)> 2kt/\ct$ with $\ct$ the constant from Lemma~\ref{lem:property-2}.
Denote
$$
T_h:=\{x\in\Omega: [x, x+kh] \subset \Omega\;\;\hbox{and}\;\;[x, x+kh]\cap T\ne \emptyset\}.
$$
We next estimate
$\|\Delta_h^k S_1\|_{L^\tau(T_h)}^\tau - \|\Delta_h^k S_2\|_{L^\tau(T_h)}^\tau$.

Assume that $T \in \cT$ is a trapezoid positioned as described above in
Properties of New Trapezoids.
We adhere to the notation introduced there.

In addition, let $v_4-v_1 =: |v_4-v_1| (\cos \gamma, \sin\gamma)$ with $\gamma \le \pi/2$,
i.e. $\gamma$ is the angle between $D_1$ and $L_1$.
Assume that  $\nn=:(\cos \theta, \sin\theta)$ with $\theta \in [\gamma, \pi]$.
The case $\theta \in [-\gamma, 0]$ is just the same.
The case when $\theta \in [0, \gamma]\cup [-\pi, -\gamma]$
is considered similarly.

We set $B_v:=B(v, 2kt/\ct)$, $v\in \cV_T$.
Also, denote
\begin{align*}
\cA_T^t &:=\{A\in\cA: d(A) > 2kt/\ct\quad\hbox{and}\quad A\cap(T+B(0, kt))\ne \emptyset\},\\
\fA_T^t &:=\{A\in\cA: d(A) \le 2kt/\ct\quad\hbox{and}\quad A\cap(T+B(0, kt))\ne \emptyset\}
\end{align*}
and
\begin{align*}
\cT_T^t:=\{T'\in\cT: d(T') > 2kt/\ct\quad\hbox{and}\quad T'\cap(T+B(0, kt))\ne \emptyset\},\\
\fTT_T^t:=\{T'\in\cT: d(T') \le 2kt/\ct\quad\hbox{and}\quad T'\cap(T+B(0, kt))\ne \emptyset\}.
\end{align*}
Clearly, $\# \cA_T^t \le c$ and $\# \cT_T^t \le c$ for some constant $c>0$.

\smallskip

{\em Case 1 (a).}
If $[x, x+kh]\subset \tri_{E_1}$, then $\Delta_h^k S_1(x)=0$
because $S_1$ is a polynomial of degree $<k$ on $\tri_{E_1}$.
Hence no estimate is needed.

\smallskip

{\em Case 1 (b).}
If $[x, x+kh] \subset \cup_{v\in\cV_{T}} B_v$,
we estimate $|\Delta_h^k S_1(x)|$ trivially:
\begin{equation}\label{est-Delta-S1-triv}
|\Delta_h^k S_1(x)|
\le |\Delta_h^k S_2(x)| + 2^k\sum_{\ell=0}^k|S_1(x+\ell h)-S_2(x+\ell h)|.
\end{equation}
Clearly, the contribution of this case to estimating
$\|\Delta_h^k S_1\|_{L^\tau(T_h)}^\tau - \|\Delta_h^k S_2\|_{L^\tau(T_h)}^\tau$~is
\begin{align*}
&\le c \sum_{v\in \cV_T}\sum_{A\in \cA_T^t}\|S_1-S_2\|_{L^\tau(B_v\cap A)}^\tau
+ c\sum_{v\in\cV_T}\sum_{T'\in\cT_T^t}\|S_1-S_2\|_{L^\tau(B_v\cap T')}^\tau\\
& + c \sum_{v\in \cV_T}\sum_{A\in \fA_T^t}\|S_1-S_2\|_{L^\tau(B_v\cap A)}^\tau
+ c\sum_{v\in\cV_T}\sum_{T'\in\fTT_T^t}\|S_1-S_2\|_{L^\tau(B_v\cap T')}^\tau\\
&\le \sum_{A\in \cA_T^t}ct^2d(A)^{\tau s-2}\|S_1-S_2\|_{L^p(A)}^\tau
 + \sum_{T'\in\cT_T^t} ct^{1+\tau s/2}d(T')^{\tau s/2-1}\|S_1-S_2\|_{L^p(T')}^\tau\\
& + \sum_{A\in \fA_T^t} cd(A)^{\tau s}\|S_1-S_2\|_{L^p(A)}^\tau
+ \sum_{T'\in\fTT_T^t} cd(T')^{\tau s}\|S_1-S_2\|_{L^p(T')}^\tau.
\end{align*}
Here we used the following estimates, which are a consequence of Lemma~\ref{lem:Nikolski-Markov}:\\
(1) If $A\in \cA_T^t$, then
\begin{align*}
\|S_1-S_2\|_{L^\tau(B_v\cap A)}^\tau
&\le c(|B_v|/|A|)\|S_1-S_2\|_{L^\tau(A)}^\tau\\
&\le ct^2d(A)^{-2}\|S_1-S_2\|_{L^\tau(A)}^\tau
\le ct^2d(A)^{\tau s-2}\|S_1-S_2\|_{L^p(A)}^\tau.
\end{align*}
(2) If $T'\in\cT_T^t$ and $\delta_1(T') > 2kt/\ct$,
then for any $v\in\cV_{T}$ we have
\begin{align*}
\|S_1-S_2\|_{L^\tau(B_v\cap T')}^\tau
&\le c(|B_v|/|T'|)\|S_1-S_2\|_{L^\tau(T')}^\tau
\le ct^2|T'|^{\tau s/2-1}\|S_1-S_2\|_{L^p(T')}^\tau\\
&\le ct^2\delta_1(T')^{\tau s/2-1}d(T')^{\tau s/2-1}\|S_1-S_2\|_{L^p(T')}^\tau\\
&\le ct^{1+\tau s/2}d(T')^{\tau s/2-1}\|S_1-S_2\|_{L^p(T')}^\tau,
\end{align*}
where we used that $\tau s/2 <1$, which is equivalent to $s<s+2/p$.

\noindent
(3) If $T'\in\cT_T^t$ and $\delta_1(T') \le 2kt/\ct$,
then for any $v\in\cV_{T}$
\begin{align*}
\|S_1-S_2\|_{L^\tau(B_v\cap T')}^\tau
&\le c(|B_v\cap T'|/|T'|)\|S_1-S_2\|_{L^\tau(T')}^\tau\\
&\le ct\delta_1(T')[\delta_1(T')d(T')]^{-1}\|S_1-S_2\|_{L^\tau(T')}^\tau\\
&\le ctd(T')^{-1}[\delta_1(T')d(T')]^{\tau s/2}\|S_1-S_2\|_{L^p(T')}^\tau\\
&\le ct^{1+\tau s/2}d(T')^{\tau s/2-1}\|S_1-S_2\|_{L^p(T')}^\tau.
\end{align*}
(4) If $A\in \fA_T^t$, then
\begin{align*}
\|S_1-S_2\|_{L^\tau(B_v\cap A)}^\tau
&\le \|S_1-S_2\|_{L^\tau(A)}^\tau
\le c|A|^{\tau s/2}\|S_1-S_2\|_{L^p(A)}^\tau\\
&\le cd(A)^{\tau s}\|S_1-S_2\|_{L^p(A)}^\tau.
\end{align*}
(5) If $T'\in \fTT_T^t$, then
\begin{align*}
\|S_1-S_2\|_{L^\tau(B_v\cap T')}^\tau
&\le \|S_1-S_2\|_{L^\tau(T')}^\tau
\le c|T'|^{\tau s/2}\|S_1-S_2\|_{L^p(T')}^\tau\\
&\le cd(T')^{\tau s}\|S_1-S_2\|_{L^p(T')}^\tau.
\end{align*}


{\em Case 1 (c).}
If $[x, x+kh]\not\subset \cup_{v\in\cV_{T}} B_v$
and $[x, x+kh]$ intersects $D_1$ or $D_2$, then
$\delta_1 > 2kt/\ct >2kt$ or $\delta_2 >2kt$ and hence $[x, x+kh]\subset \tri_{E_1}\cap\tri_{E_2}$,
which implies $\Delta_h^k S_1(x)=0$.
No estimate is needed.

\smallskip

{\em Case 1 (d).}
Let $\ITh\subset T$ be the quadrilateral bounded by the segments
$L_1$, $L_1-kh$, $D_1$ and
the line with equation $x=v_2+uh$, $u\in\R$,
where $v_2$ is the point of intersection of $L_2$ with $D_1$,
whenever this straight line intersects $L_1$.
If the line $x=v_2+uh$, $u\in\R$, does not intersect $L_1$,
then we replace it with the line $x=v_4+uh$, $u\in\R$.
Furthermore, we subtract $B_{v_1}$ and $B_{v_2}$ from $\ITh$.

Set $\JTh:= \ITh+[0, kh]$.

A simple geometric argument shows that $|\JTh| \le 2\delta_1 kt$.

In estimating $\|\Delta_h^k S_1\|_{L^\tau(\ITh)}^\tau$ there are two subcases to be considered.

If $\delta_1(T) \le 2kt/\ct$, we use (\ref{est-Delta-S1-triv}) to obtain
\begin{align*}
\|\Delta_h^k S_1\|_{L^\tau(\ITh)}^\tau
&\le \|\Delta_h^k S_2\|_{L^\tau(\ITh)}^\tau
+\|S_1-S_2\|_{L^\tau(\ITh)}^\tau
+\|S_1-S_2\|_{L^\tau(\JTh\cap A_1)}^\tau.
\end{align*}
We estimate the above norms quite like in {\em Case 1 ($b$)}, using Lemma~\ref{lem:Nikolski-Markov}.
We have
\begin{align*}
&\|S_1-S_2\|_{L^\tau(\ITh)}^\tau
\le c(|\ITh|/|T|)\|S_1-S_2\|_{L^\tau(T)}^\tau\\
&\le ct\delta_1(T)[\delta_1(T)d(T)]^{-1}\|S_1-S_2\|_{L^\tau(T)}^\tau
\le ct d(T)^{-1}|T|^{\tau s/2}\|S_1-S_2\|_{L^p(T)}^\tau\\
&\le ct d(T)^{-1}(\delta_1(T) d(T))^{\tau s/2}\|S_1-S_2\|_{L^p(T)}^\tau
\le ct^{1+\tau s/2} d(T)^{\tau s/2-1}\|S_1-S_2\|_{L^p(T)}^\tau.
\end{align*}
For the second norm we get
\begin{align*}
\|S_1-S_2\|_{L^\tau(\JTh\cap A_1)}^\tau
\le c|\JTh|\|S_1-S_2\|_{L^\infty(A_1)}^\tau
&\le ct^2|A_1|^{-\tau/p} \|S_1-S_2\|_{L^p(A_1)}^\tau\\
\le ct^2d(A_1)^{-2\tau/p} \|S_1-S_2\|_{L^p(A_1)}^\tau
&= ct^2d(A_1)^{\tau s-2} \|S_1-S_2\|_{L^p(A_1)}^\tau,
\end{align*}
where as before we used the fact that $2\tau/p=2-\tau s$.

From the above estimates we infer
\begin{align*}
\|\Delta_h^k S_1\|_{L^\tau(\ITh)}^\tau
\le \|\Delta_h^k S_2\|_{L^\tau(\ITh)}^\tau
&+ ct^{1+\tau s/2} d(T)^{\tau s/2-1}\|S_1-S_2\|_{L^p(T)}^\tau\\
&+ ct^2d(A_1)^{\tau s-2} \|S_1-S_2\|_{L^p(A_1)}^\tau.
\end{align*}


Let $\delta_1(T) > 2kt/\ct$.
We use (\ref{rep-Delta-k}) to obtain
\begin{align*} 
|\Delta_h^k S_1(x)|
&\le |\Delta_h^k S_2(x)|+|\Delta_h^k (S_1-S_2)(x)|\\
&\le |\Delta_h^k S_2(x)|+ ct^{k-1}\|D_\nn^{k-1}(S_1-S_2)\|_{L^\infty([x, x+kh])},
\end{align*}
implying
\begin{align*}
\|\Delta_h^k S_1\|_{L^\tau(\ITh)}^\tau
\le \|\Delta_h^k S_2\|_{L^\tau(\ITh)}^\tau
&+ c |\ITh|t^{\tau(k-1)}\|D_\nn^{k-1} (S_1-S_2)\|_{L^\infty(\ITh\cap T)}^\tau\\
&+ c |\ITh|t^{\tau(k-1)}\|D_\nn^{k-1} (S_1-S_2)\|_{L^\infty(A_1)}^\tau.
\end{align*}
Clearly,
\begin{align*}
&\|D_\nn^{k-1} (S_1-S_2)\|_{L^\infty(\ITh\cap T)}
\le c\delta_1(T)^{-(k-1)}\|S_1-S_2\|_{L^\infty(T)}\\
&\le c\delta_1(T)^{-(k-1)}|T|^{-1/p}\|S_1-S_2\|_{L^p(T)}
\le c\delta_1(T)^{-(k-1)-2/p}\|S_1-S_2\|_{L^p(T)},
\end{align*}
and
\begin{align}\label{deriv-A1}
\|D_\nn^{k-1} (S_1-S_2)\|_{L^\infty(A_1)}
&\le cd(A_1)^{-(k-1)}\|S_1-S_2\|_{L^\infty(A_1)}\\
&\le cd(A_1)^{-(k-1)-2/p}\|S_1-S_2\|_{L^p(A_1)}.\notag
\end{align}
Therefore,
\begin{align*}
\|\Delta_h^k S_1\|_{L^\tau(\ITh)}^\tau
&\le \|\Delta_h^k S_2\|_{L^\tau(\ITh)}^\tau
+ ct^{1+\tau(k-1)}\delta_1(T)^{1-\tau(k-1)-2\tau/p}\|S_1-S_2\|_{L^p(T)}^\tau\\
&+ ct^{1+\tau(k-1)}d(A_1)^{1-\tau(k-1)-2\tau/p}\|S_1-S_2\|_{L^p(A_1)}^\tau.
\end{align*}


{\em Case 1 (e) (Main).}
Let $T_h^\star \subset T_h$ be the set defined by
\begin{equation}\label{def-T-star}
T_h^\star:=\Big\{x\in T_h:
[x, x+kh]\cap L_1 \ne \emptyset
\quad\hbox{and}\quad
[x,x+ kh] \not\subset \ITh\bigcup_{v\in\cV_{T}} B_v\Big\}.
\end{equation}
We next estimate
$\|\Delta_h^k S_1\|_{L^\tau(T_h^\star)}^\tau$.

Let $x\in T_h^\star$.
Denote by $b_1$ and $b_2$ the points where the line through $x$ and $x+kh$
intersects $L_1$ and $L_2$.
Set $b=b(x):=b_2-b_1$.
We associate the segment $[x+b, x+b +kh]$ to $[x, x+kh]$ and
$\Delta^k_hS_2(x+b)$ to $\Delta^k_hS_1(x)$.

Since $S_1\in \Pi_k$ on $\tri_{E_1}$ we have
$D_\nn^{k-1}S_1(y) = \constant$ on $[b_1, x+b]$
and hence
\begin{equation}\label{S1-I3}
D_\nn^{k-1}S_1(b_1-u\nn)= D_\nn^{k-1}S_1(b_2-u\nn) \quad\hbox{for}\quad 0\le u\le |x-b_1|.
\end{equation}
Similarly, since $S_2\in \Pi_k$ on $\tri_{E_2}$ we have
$D_\nn^{k-1}S_2(y) = \constant$ on $[x+kh, b_2]$ and hence
\begin{equation}\label{S2-I3}
D_\nn^{k-1}S_2(b_1+u\nn)= D_\nn^{k-1}S_2(b_2+u\nn) \quad\hbox{for}\quad 0\le u\le |x+kh-b_1|.
\end{equation}
We use (\ref{rep-Delta-k}) and (\ref{S1-I3}) - (\ref{S2-I3}) to obtain
\begin{align*}
\Delta^k_hS_1(x)
& = |h|^{k-1}\int_{|b_1-x|}^{k|h|} D_\nn^{k-1} S_1(x+u\nn)M^*_k(u) du\\
&+ |h|^{k-1}\int_0^{|b_1-x|} D_\nn^{k-1} S_1(x+u\nn)M^*_k(u) du\\
& = |h|^{k-1}\int_{|b_1-x|}^{k|h|} D_\nn^{k-1} S_1(x+u\nn)M^*_k(u) du\\
&+ |h|^{k-1}\int_0^{|b_1-x|} D_\nn^{k-1} S_1(x+b+u\nn)M^*_k(u) du
\end{align*}
and
\begin{align*}
\Delta^k_hS_2(x+b)
& = |h|^{k-1}\int_{|b_1-x|}^{k|h|} D_\nn^{k-1} S_2(x+b+u\nn)M^*_k(u) du\\
&+ |h|^{k-1}\int_0^{|b_1-x|} D_\nn^{k-1} S_2(x+b+u\nn)M^*_k(u) du\\
& = |h|^{k-1}\int_{|b_1-x|}^{k|h|} D_\nn^{k-1} S_2(x+u\nn)M^*_k(u) du\\
&+ |h|^{k-1}\int_0^{|b_1-x|} D_\nn^{k-1} S_2(x+b+u\nn)M^*_k(u) du.
\end{align*}
Therefore,
\begin{align*}
\Delta^k_hS_1(x)
& = \Delta^k_hS_2(x+b) + \Delta^k_h (S_1-S_2)(x)  \notag\\
& = \Delta^k_hS_2(x+b)
+|h|^{k-1}\int_{|b_1-x|}^{k|h|} D_\nn^{k-1} [S_1-S_2]\big(x+u\nn\big)M^*_k(u) du\\
&+ |h|^{k-1}\int_0^{|b_1-x|} D_\nn^{k-1} [S_1-S_2]\big(x+b+u\nn\big)M^*_k(u) du  \notag
\end{align*}
and hence
\begin{align}\label{rep-DeltaS-k-1}
|\Delta^k_hS_1(x)|
& \le |\Delta^k_hS_2(x+b)|
+ct^{k-1}\|D_\nn^{k-1}(S_1-S_2)\|_{L^\infty([b_1, x+kh])}\\
&+ ct^{k-1}\|D_\nn^{k-1}(S_1-S_2)\|_{L^\infty([x+b, b_2])}  \notag
\end{align}
The key here is that $([b_1, x+kh]\cup [x+b, b_2])\cap T^\circ=\emptyset$.

Let $T_h^{\star\star}:=\{x+b(x): x\in T_h^{\star}\}$,
where $b(x)$ is defined above.
By (\ref{rep-DeltaS-k-1}) we get
\begin{align*}
\|\Delta_h^k S_1\|_{L^\tau(T_h^\star)}^\tau
&\le \|\Delta_h^k S_2\|_{L^\tau(T_h^{\star\star})}^\tau
+ c t d(A_1)t^{\tau(k-1)}\|D_\nn^{k-1} (S_1-S_2)\|_{L^\infty(A_1)}^\tau\\
&+ c t d(A_2)t^{\tau(k-1)}\|D_\nn^{k-1} (S_1-S_2)\|_{L^\infty(A_2)}^\tau.
\end{align*}
Just as (\ref{deriv-A1}) we have
\begin{align*}
\|D_\nn^{k-1} (S_1-S_2)\|_{L^\infty(A_1)}
&\le cd(A_1)^{-(k-1)}\|S_1-S_2\|_{L^\infty(A_1)}\\
&\le cd(A_1)^{-(k-1)-2/p}\|S_1-S_2\|_{L^p(A_1)},\notag
\end{align*}
and similar estimates hold with $A_1$ replaced by $A_2$.
We use these above to obtain
\begin{align*}
\|\Delta_h^k S_1\|_{L^\tau(T_h^\star)}^\tau
&\le \|\Delta_h^k S_2\|_{L^\tau(T_h^{\star\star})}^\tau
+ ct^{1+\tau(k-1)}d(A_1)^{1-\tau(k-1)-2\tau/p}\|S_1-S_2\|_{L^p(A_1)}^\tau\\
&+ ct^{1+\tau(k-1)}d(A_2)^{1-\tau(k-1)-2\tau/p}\|S_1-S_2\|_{L^p(A_2)}^\tau.
\end{align*}
It is an important observation that no part of
$\|\Delta_h^k S_2\|_{L^\tau(T_h^{\star\star})}^\tau$
has been used for estimation of quantities $\|\Delta_h^k S_1\|_{L^\tau(\cdot)}^\tau$
from previous cases.


Putting all of the above estimates together we arrive at
\begin{align}\label{est-case-1}
\|\Delta_h^k S_1\|_{L^\tau(T_h)}^\tau \le \|\Delta_h^k S_2\|_{L^\tau(T_h)}^\tau
+Y_1+Y_2+ Y_3 + Y_4,
\end{align}
where
\begin{align*}
Y_1 &:= \sum_{A\in \cA_T^t} ct^2d(A)^{\tau s-2}\|S_1-S_2\|_{L^p(A)}^\tau
 + \sum_{A\in \fA_T^t} cd(A)^{\tau s}\|S_1-S_2\|_{L^p(A)}^\tau,\\
\end{align*}
\begin{multline*}
Y_2 :=  ct^{1+\tau(k-1)}d(A_1)^{1-\tau(k-1)-2\tau/p}\|S_1-S_2\|_{L^p(A_1)}^\tau\\
+ ct^{1+\tau(k-1)}d(A_2)^{1-\tau(k-1)-2\tau/p}\|S_1-S_2\|_{L^p(A_2)}^\tau,
\end{multline*}
%
%
\begin{multline*}
Y_3 := \sum_{T'\in\cT_T^t}
ct^{1+\tau s/2}d(T')^{\tau s/2-1}\|S_1-S_2\|_{L^p(T')}^\tau\\
+ \sum_{T'\in\fTT_T^t} cd(T')^{\tau s}\|S_1-S_2\|_{L^p(T')}^\tau
 + ct^{1+\tau s/2} d(T)^{\tau s/2-1}\|S_1-S_2\|_{L^p(T)}^\tau,
\end{multline*}
%
and
\begin{align*}
Y_4 := ct^{1+\tau(k-1)}\delta_1(T)^{1-\tau(k-1)-2\tau/p}\|S_1-S_2\|_{L^p(T)}^\tau,
\quad\hbox{if}\;\;\delta_1(T)>2kt/\ct,
\end{align*}
otherwise $Y_4:=0$.

\begin{rem}\label{rem:main}
In all cases we considered above but {\em Case 1 (e)} we used the simple inequality
$|\Delta^k_hS_1(x)| \le |\Delta^k_hS_2(x)| + |\Delta^k_h(S_1-S_2)(x)|$
to estimate $\|\Delta_h^k S_1\|_{L^\tau(G)}^\tau$ for various sets $G$
and this works because these sets are of relatively small measure.
As Example~\ref{example-1} shows this approach in principle cannot be used in {\em Case 1 (e)}
and this is the main difficulty in this proof.
The gist of our approach in going around is to estimate $|\Delta^k_hS_1(x)|$
by using $|\Delta^k_hS_2(x+b)|$ with some shift $b$,
where $|\Delta^k_hS_2(x+b)|$ is not used to estimate other terms $|\Delta^k_hS_1(x')|$
(there is a one-to-one correspondence between these quantities).
\end{rem}

\subsection*{Case 2}

Let $\Omega_h^\star$ be the set of all $x\in \Omega$ such that
$[x, x+kh] \subset \Omega$,
$[x, x+kh]\cap A \ne \emptyset$ for some $A\in\cA$ with $d(A) > 2kt/\ct$,
and
$[x, x+kh] \cap T = \emptyset$ for all $T\in\cT$ with $d(T) \ge 2kt/\ct$.

Denote by $\cV_A$ the set of all vertices on $\partial A$ and
set $B_v:=B(v, 4kt/\ct)$, $v\in\cV_A$.

We next indicate how we estimate $|\Delta_h^k S_1(x)|$ in different cases.

{\em Case 2 (a).}
If $[x, x+kh] \subset A$, then
$
\Delta_h^kS_1(x)= \Delta_h^kS_2(x) =0
$
and no estimate is needed.

\smallskip

{\em Case 2 (b).}
If $[x, x+kh] \subset \cup_{v\in\cV_A} B(v, 2kt/\ct)$,
we estimate $|\Delta_h^k S_1(x)|$ trivially:
\begin{equation*} 
|\Delta_h^k S_1(x)|
\le |\Delta_h^k S_2(x)| + 2^k\sum_{\ell=0}^k|S_1(x+\ell h)-S_2(x+\ell h)|.
\end{equation*}

\smallskip

{\em Case 2 (c).}
Let $[x, x+kh]$ intersects the edge $E=:[w_1, w_2]$ from $\partial A$,
that is shared with $A'\in\cA$
and $[x, x+kh] \not\subset \cup_{v\in\cV_A} B_v$.
Let $y:= E\cap[x, x+kh]$.
Evidently, $|y-w_j| > kt/\ct$, $j=1,2$, and in light of Lemma~\ref{lem:property-2} we have
$[x, x+kh] \subset B(y, kt) \subset A\cup A'$.
In this case we use the inequality
\begin{align*} 
|\Delta_h^k S_1(x)|
&\le |\Delta_h^k S_2(x)|+|\Delta_h^k (S_1-S_2)(x)|\\
&\le |\Delta_h^k S_2(x)|+ ct^{k-1}\|D_\nn^{k-1}(S_1-S_2)\|_{L^\infty([x, x+kh])}\notag,
\end{align*}
which follows by (\ref{rep-Delta-k}).

The case when $[x, x+kh]$ intersects an edge from $\partial A$
that is shared with some $T\in\cT$ is covered in Case 1 above.

We proceed further similarly as in Case 1
and in the proof of Theorem~\ref{thm:smooth-Bernstein-1}
to obtain
\begin{align}\label{est-case-2}
\|\Delta_h^kS_1\|_{L^\tau(\Omega_t^\star)}^\tau
\le \|\Delta_h^kS_2\|_{L^\tau(\Omega_t^\star)}^\tau + Y_1+Y_2,
\end{align}
where
\begin{multline*}
Y_1 := \sum_{A\in\cA: d(A) \ge 2kt/\ct}t^{1+\tau(k-1)}
cd(A)^{1-\tau(k-1)-2\tau/p}\|S_1-S_2\|_{L^p(A)}^\tau \notag\\
+ \sum_{A\in\cA: d(A) \ge 2kt/\ct} ct^2 d(A)^{\tau s-2}\|S_1-S_2\|_{L^p(A)}^\tau
\end{multline*}
and
\begin{multline*}
Y_2
:= \sum_{A\in\cA: d(A) \le 2kt/\ct} cd(A)^{\tau s}\|S_1-S_2\|_{L^p(A)}^\tau\\
+ \sum_{T\in\cT: d(T) \le 2kt/\ct} cd(T)^{\tau s}\|S_1-S_2\|_{L^p(T)}^\tau.
\end{multline*}

\subsection*{Case 3}

Let $\Omega_h^{\star\star}$ be the set of all $x\in \Omega$ such that
$$
[x, x+kh] \subset \cup \{A\in\cA: d(A) \le 2kt/\ct\} \cup \{T\in\cT: d(T) \le 2kt/\ct\}.
$$
In this case we estimate $|\Delta_h^k S_1(x)|$ trivially
just as in (\ref{est-Delta-S1-triv}).
We obtain
\begin{align*}
\|\Delta_h^kS_1\|_{L^\tau(\Omega_h^{\star\star})}^\tau
&\le \|\Delta_h^kS_2\|_{L^\tau(\Omega_h^{\star\star})}^\tau
+ \sum_{A\in\cA: d(A) \le 2kt/\ct}c\|S_1-S_2\|_{L^\tau(A)}^\tau\\
&+ \sum_{T\in\cT: d(T) \le 2kt/\ct} c\|S_1-S_2\|_{L^\tau(T)}^\tau\\
&\le \|\Delta_h^kS_2\|_{L^\tau(\Omega_h^{\star\star})}^\tau
+ \sum_{A\in\cA: d(A) \le 2kt/\ct} cd(A)^{\tau s}\|S_1-S_2\|_{L^p(A)}^\tau\\
&+ \sum_{T\in\cT: d(T) \le 2kt/\ct} cd(T)^{\tau s}\|S_1-S_2\|_{L^p(T)}^\tau.
\end{align*}

Just as in the proof of Theorem~\ref{thm:Bernstein} it is important to note that
in the above estimates only finitely many norms may overlap at a time.
From above, (\ref{est-case-1}), and (\ref{est-case-2}) we obtain
\begin{align*}
\omega_k(S_1, t)_\tau^\tau
&\le \omega_k(S_2, t)_\tau^\tau + \bA_t + \bT_t,
\end{align*}
where
\begin{align*}
\bA_t
&:= \sum_{A\in\cA: d(A) > 2kt/\ct}t^{1+\tau(k-1)}
cd(A)^{1-\tau(k-1)-2\tau/p}\|S_1-S_2\|_{L^p(A)}^\tau \notag\\
&+ \sum_{A\in\cA: d(A) > 2kt/\ct} ct^2 d(A)^{\tau s-2}\|S_1-S_2\|_{L^p(A)}^\tau\\
&+ \sum_{A\in\cA: d(A) \le 2kt/\ct} cd(A)^{\tau s}\|S_1-S_2\|_{L^p(A)}^\tau.
\end{align*}
and
\begin{align*}
\bT_t
&:= \sum_{T\in\cT: \delta_1(T) > 2kt/\ct}
ct^{1+ \tau(k-1)}\delta_1(T)^{1-\tau(k-1)-2\tau/p}\|S_1-S_2\|_{L^p(T)}^\tau \notag\\
&+ \sum_{T\in\cT: \delta_2(T) > 2kt/\ct}
ct^{1+ \tau(k-1)}\delta_2(T)^{1-\tau(k-1)-2\tau/p}\|S_1-S_2\|_{L^p(T)}^\tau \notag\\
&+ \sum_{T\in\cT: d(T) > 2kt/\ct}
ct^{1+\tau s/2} d(T)^{\tau s/2-1}\|S_1-S_2\|_{L^p(T)}^\tau\\
&+ \sum_{T\in\cT: d(T) \le 2kt/\ct} cd(T)^{\tau s}\|S_1-S_2\|_{L^p(T)}^\tau.
\end{align*}

We insert this estimate in (\ref{def-smooth-Besov}) and
interchange the order of integration and summation to obtain
\begin{align*}
|S_1|_{B^{s,k}_\tau}^\tau \le |S_2|_{B^{s,k}_\tau}^\tau + Z_1 + Z_2,
\end{align*}
where
\begin{align*}
Z_1
&:= c \sum_{A\in\cA}  d(A)^{1-\tau(k-1)-2\tau/p}\|S_1-S_2\|_{L^p(A)}^\tau
\int_0^{\ct d(A)/2k} t^{-\tau s+\tau(k-1)} dt \\
& + c\sum_{A\in\cA} d(A)^{\tau s-2}\|S_1-S_2\|_{L^p(A)}^\tau \int_0^{\ct d(A)/2k} t^{-\tau s+1} dt\\
&+ c\sum_{A\in\cA} d(A)^{\tau s}\|S_1-S_2\|_{L^p(A)}^\tau \int_{\ct d(A)/2k}^\infty t^{-\tau s-1} dt
\end{align*}
and
\begin{align*}
Z_2
&:= c \sum_{T\in\cT}  \delta_1(T)^{1-\tau(k-1)-2\tau/p}\|S_1-S_2\|_{L^p(T)}^\tau
\int_0^{\ct \delta_1(T)/2k} t^{-\tau s+ \tau(k-1)} dt \\
&+ c \sum_{T\in\cT}  \delta_2(T)^{1-\tau(k-1)-2\tau/p}\|S_1-S_2\|_{L^p(T)}^\tau
\int_0^{\ct \delta_2(T)/2k} t^{-\tau s+ \tau(k-1)} dt \\
& + c\sum_{T\in\cT} d(T)^{\tau s/2-1}\|S_1-S_2\|_{L^p(T)}^\tau \int_0^{\ct d(T)/2k} t^{-\tau s/2} dt\\
&+ c\sum_{T\in\cT} d(T)^{s\tau}\|S_1-S_2\|_{L^p(T)}^\tau \int_{\ct d(T)/2k}^\infty t^{-\tau s-1} dt.
\end{align*}
Observe that $-\tau s + \tau(k-1) > -1$ is equivalent to $s/2<k-1+1/p$
which holds true by the hypothesis,
and $-\tau s/2>-1$ is equivalent to  $s<2/\tau = s+2/p$
which is obvious.
Therefore, all integrals above are convergent and taking into account that
$2-2\tau/p-\tau s = 2\tau(1/\tau-1/p-s/2) = 0$ we obtain
\begin{align*}
|S_1|_{B^{s,k}_\tau}^\tau
&\le |S_2|_{B^{s,k}_\tau}^\tau + c\sum_{A\in\cA\cup \cT} \|S_1-S_2\|_{L^p(A)}^\tau\\
&\le |S_2|_{B^{s,k}_\tau}^\tau + cn^{\tau(1/\tau-1/p)}\Big(\sum_{A\in\cA\cup\cT} \|S_1-S_2\|_{L^p(A)}^\tau\Big)^{\tau/p}\\
&=|S_2|_{B^{s,k}_\tau}^\tau + cn^{\tau s/2}\|S\|_{L^p(\Omega)}^\tau,
\end{align*}
where we used H\"{o}lder's inequality.
This completes the proof of Theorem~\ref{thm:smooth-Bernstein-2}.
\qed

\medskip

\noindent
{\bf Acknowledgment.}
We would like to give credit to Peter Petrov (Sofia University)
with whom the second author discussed the theme of this article some years ago.

\end{document}